\DeclareMathAlphabet{\pazocal}{OMS}{zplm}{m}{n}
\newcolumntype{C}[1]{>{\centering\let\newline\\\arraybackslash\hspace{0pt}}m{#1}}
\def\alg{\mathop{\text{\bf alg}}}
\def\grp{\mathop{\text{\bf Grp}}}
\def\set{\mathop{\text{\bf Set}}}
\def\Gm{\mathop{\text{\bf G}_{\text{\bf m}}}}
\def\Ga{\mathop{\text{\bf G}_{\text{\bf a}}}}
\def\Mu{\mathop{\boldsymbol{\mu}}}
\def\F{\mathcal F}
\def\H{\mathcal H}
\def\U{\mathcal U}
\def\T{\mathcal T}
\DeclareMathOperator{\aut}{Aut}
\def\affaut{\mathop{\text{\bf Aut}}}
\def\o{\otimes}
\def\a{\alpha}
\def\b{\beta}
\def\l{\lambda}
\def\m{\mu}
\def\s{\sigma}
\def\w{\omega}
\def\affaut{\mathop{\text{\bf aut}}}
\def\E{A}
\def\remove#1{}
\def\rep{\mathop{\hbox{\bf rep}}}
\def\mi{\mathfrak{m}}
\DeclareMathOperator{\car}{char}
\newtheorem{lemma}{Lemma}[section]
\newtheorem{corollary}[lemma]{Corollary}
\newtheorem{theorem}[lemma]{Theorem}
\newtheorem{proposition}[lemma]{Proposition}
\newtheorem{remark}[lemma]{Remark}
\newtheorem{definition}[lemma]{Definition}
\title[Hopf algebras in evolution algebras]{
Hopf algebras and associative representations of two-dimensional evolution algebras.
}
\author[Y. Cabrera]{Yolanda Cabrera Casado}
\address{Y. Cabrera Casado: Departamento de Matem\'atica Aplicada, E.T.S. Ingenier\'\i a Inform\'atica, Universidad de M\'alaga, Campus de Teatinos s/n. 29071 M\'alaga.   Spain.}
\email{yolandacc@uma.es}
\author[M. I. Gon\c calves]{Maria Inez Cardoso Gon\c calves}
\address{M. I. Cardoso Gon\c calves: Departamento de Matem\'atica, Universidade Federal de Santa Catarina, Florian\'opolis, SC, 88040-900 - Brazil}
\email{maria.inez@ufsc.br}
\author[D. Gon\c calves]{Daniel Gon\c calves}
\address{D. Gon\c calves: Departamento de Matem\'atica, Universidade Federal de Santa Catarina, Florian\'opolis, SC, 88040-900 - Brazil}
\email{daemig@gmail.com}
\author[D. Mart\'\i n]{Dolores Mart\'\i n Barquero}
\address{D. Mart\'\i n Barquero: Departamento de Matem\'atica Aplicada, Escuela de Ingenier\'\i as Industriales, Universidad de M\'alaga, Campus de Teatinos s/n. 29071 M\'alaga.   Spain.}
\email{dmartin@uma.es}
\author[C. Mart\'\i n]{C\'andido Mart\'\i n Gonz\'alez}
\address{C. Mart\'\i n Gonz\'alez: Departamento de \'Algebra Geometr\'{\i}a y Topolog\'{\i}a, Fa\-cultad de Ciencias, Universidad de M\'alaga, Campus de Teatinos s/n. 29071 M\'alaga.   Spain.}
\email{candido\_m@uma.es}
\author[I. Ruiz]{Iv\'an Ruiz Campos}
\address{I. Ruiz Campos:  Departamento de \'Algebra Geometr\'{\i}a y Topolog\'{\i}a, Fa\-cultad de Ciencias, Universidad de M\'alaga, Campus de Teatinos s/n. 29071 M\'alaga. Spain.}
\email{ivaruicam@uma.es}
\thanks{The first,  fourth, fifth, and sixth authors are supported by the Spanish Ministerio de Ciencia e Innovaci\'on   through project  PID2019-104236GB-I00/AEI/10.13039/501100011033 and PID2023-152673NB-I00
 and by the Junta de Andaluc\'{i}a  through projects  FQM-336 and UMA18-FEDERJA-119,    all of them with FEDER funds.  The third author was partially supported by Capes-Print Brazil, Conselho Nacional de Desenvolvimento Cient\'ifico e Tecnol\'ogico (CNPq) - Brazil, and Funda\c{c}\~ao de Amparo \`a Pesquisa e Inova\c{c}\~ao do Estado de Santa Catarina (FAPESC). The fourth and fifth authors were supported by Brazilian Federal Agency for Support and Evaluation of Graduate Education â Capes with Process numbers: 88887.895676/2023-00 and 88887.895611/2023-00 respectively. The sixth author is supported by a Junta de Andalucía PID fellowship no. PREDOC\_00029. The second and third authors are partially supported  by the Junta de Andaluc\'{i}a project FQM-336.} 
\subjclass[2020] {17A60, 17A36, 17D92, 16T05.} 
\keywords{Evolution algebra, Hopf algebra, Affine group scheme, Faithful representation, Automorphism.}
\begin{document}
\maketitle
\begin{abstract}
In this paper, we establish a connection between evolution algebras of dimension two and Hopf algebras, via the algebraic group of automorphisms of an evolution algebra.   
    Initially, we describe the Hopf algebra associated with the automorphism group of a 2-dimensional evolution algebra. Subsequently, for a 2-dimensional evolution algebra $A$ over a field $K$, we detail the relation between the algebra associated with the (tight) universal associative and commutative representation of $A$, referred to as the (tight) $p$-algebra, and the corresponding Hopf algebra, $\mathcal{H}$, representing the affine group scheme $\text{Aut}(A)$. 
    Our analysis involves the computation of the (tight) $p-$algebra associated with any 2-dimensional evolution algebra, whenever it exists. We find that $\text{Aut}(A)=1$  if and only if there is no faithful associative and commutative representation for $A$. Moreover, there is a faithful associative and commutative representation for $A$ if and only if $\mathcal{H}\not\cong K$ and $\text{char} (K)\neq 2$, or $\mathcal{H}\not\cong K(\epsilon)$  (the dual numbers algebra) and $\mathcal{H}\not\cong K$ in case of $\text{char} (K)= 2$. Furthermore, if $A$ is perfect and has a faithful tight $p$-algebra, then this $p$-algebra is isomorphic to $\mathcal{H}$ (as algebras). Finally, we derive implications for arbitrary finite-dimensional evolution algebras.
\end{abstract}

\section{Introduction}

Evolution algebras, introduced as models for non-Mendelian genetics in Tian's 2004 Ph.D. Thesis \cite{tian}, have since sparked extensive research efforts, as evidenced in \cite{ceballos}. In another vein, Hopf algebras and their relations with affine group schemes have become an important topic of study in the the field of Algebra, garnering considerable attention from researchers, \cite{Waterhouse}. In this work, we establish 
a connection between these two fundamental concepts via the algebraic group of automorphisms of a two-dimensional evolution algebra.

Central to our study is the notion of associative representation for nonassociative algebras, as introduced by Shestakov and Kornev in \cite{shestakov}. Notably, they found an associative (faithful) representation of octonions within an $8\times 8$-matrix algebra. 
Before we go into the technical details of this concept, let us write a few lines on the philosophy that inspires it. 
Consider the algebra
$$\U=K[x,y]/(x^4-x,y^4-y,xy)$$ with involution $*\colon \U\to\U$ induced by  $\bar x^*=\bar y^2$ and $\bar y^*=\bar x^2$ (where the bar denotes class modulo the ideal $(x^4-x,y^4-y,xy)$). If we define in $\U$ the multiplication $p\colon \U\times\U\to\U$ given by $p(a,b)=a^*b^*$, then we have the following: $p(\bar x,\bar y)=\bar x^*\bar y^*=\bar y^2\bar x^2=0$, $p(\bar x,\bar x)=(\bar x^*)^2=\bar y^4=\bar y$ and $p(\bar y,\bar y)=(\bar y^*)^2=\bar x^4=\bar x$. In the sequel, we will prove  that $\{\bar x,\bar y\}$ is a linearly independent subset of $\U$, and the multiplication of the algebra $A:=K\bar x\oplus K\bar y$ relative to the product $p$ is represented by the following table:
\begin{center}
\begin{tabular}{|c||c|c|}
\hline 
  $p$  & $\bar x$ & $\bar y$\\
  \hline 
  $\bar x$ & $\bar y$ & $0$\\
  $\bar y$ & $0$ & $\bar x$\\
\hline     
\end{tabular}
\end{center}
Thus, the $2$-dimensional evolution algebra $A$ is constructed as a subalgebra of a polynomial algebra relative to the product $p$. 
It is important to note that while $A$ is not associative, it can be embedded into an associative algebra with involution (though we must pay the price of replacing the associative product of $\U$ with the product $p$).
As we will see, $\U$ is the universal associative and commutative representation of $A$. 

We now give a more detailed description of the construction of the universal associative and commutative representation of a 2-dimensional evolution algebra. 
Following up the work \cite{squares}, given a field $K$, we consider a two-dimensional evolution $K$-algebra $A$ with natural basis $\{e_i\}_{i=1}^2$ such that $e_i^2=\sum_{j=1}^2\w_{ji}e_j$, $w_{ji}\in K$, the polynomial algebra  in four variables $K[x,y,x^*,y^*]$, a formal polynomial $p(a,b)=\lambda_0 ab + \lambda_1 ab^*+ \lambda_2 a^*b+ \lambda_3 a^*b^*$, and the ideal $I\triangleleft K[x,y,x^*,y^*]$ generated by the elements $p(x,x)-\w_{11} x-\w_{12} y$, $p(y,y)-\w_{21} x-\w_{22} y$, $p(x,y)$, $p(y,x)$ and $p(x^*,x^*)-\w_{11} x^*-\w_{12} y^*$, $p(y^*,y^*)-\w_{21} x^*-\w_{22} y^*$, $p(x^*,y^*)$, $p(y^*,x^*)$.
Then, we make the quotient $\U_p:=K[x,y,x^*,y^*]/I$ equipped with an involution induced by the map $\bar x\mapsto \overline{x^*}$ and similarly for $\bar y$. 
Finally, we consider the product $\U_p\times\U_p\to \U_p$ such that $(z,t)\mapsto \lambda_0 zt + \lambda_1 zt^*+ \lambda_2 z^*t+ \lambda_3 z^*t^*$.
This construction aims to find an algebra homomorphism $\rho \colon A \to (\U_p,p)$ such that $(\U_p,p)$ is the universal (associative and commutative) representation of $A$, in the sense that if we have another representation $\sigma \colon A \to V$, with $V$ an associative and commutative unital $K$-algebra with involution, then there exists a $*$-homomorphism of unital associative algebras with involution $\theta: \U_p \to V$ such that $\sigma =  \theta \circ \rho $. Similarly, we also use a kind of tight universal associative and commutative representation (not necessarily unital) $\T_p$ related to $\U_p$, which is, in a certain sense, \lq\lq smaller\rq\rq\ than $\U_p$. 
The universal homomorphism $\rho\colon A\to\U_p$ factorizes through $\T_p$, so $\T_p$ also has a suitable universal property.

Given the seemingly untamed nature of evolution algebras, attributed to their modest definitional requirements, it is interesting to inquire about those evolution algebras that can be embedded in associative algebras somehow.  
One of our goals is to identify which $2$-dimensional evolution algebras admit a faithful representation in a polynomial algebra. 

In \cite{squares}, we conjectured that when the group of automorphisms of a two-dimensional evolution algebra is trivial, there is no nontrivial associative representation $\U_p$. 
However, we lacked a definitive proof of this fact, due to inconclusive computer calculations (refer to \cite[Remark 6.9]{squares}).
In this work, we present a proof of this conjecture.

As we mentioned before, one of our key results is to establish a connection between the universal associative and commutative representation of an evolution algebra $A$ and the Hopf algebra representing its automorphism group (as an affine group scheme). Specifically, for any $2$-dimensional evolution algebra $A$, we compute the Hopf algebra $\H$ such that $\affaut(A)\cong\hom_{\alg_K}(\H,\_)$ and compare it with $\U_p$ for specific $p$.
In the perfect case, we prove that $\H\cong\T_p$ as algebras, prompting conjectures inspired by the $2$-dimensional scenario. Finally, we obtain some consequences that extend beyond the $2$-dimensional setting.

The paper is organized as follows. In Section~\ref{Ivanov}, we set up notation, recall the classification of evolution algebras of dimension two, and describe the Hopf algebras representing the affine group scheme of automorphisms of the two-dimensional evolution algebras. Next, in Section~\ref{morrocotudo}, we describe associative and commutative representations of the algebras studied in Section~\ref{Ivanov}. We finish the paper in Section~\ref{banco}, where we provide the notion of associative and commutative representation of an evolution algebra (a minor variation on the notion of associative and commutative representation of \cite{shestakov}) and present results connecting the associative and commutative representation of a finite-dimensional evolution algebra with its automorphism group and corresponding Hopf algebra.

\section{Affine group scheme of automorphisms}\label{Ivanov}

Let $K$ be a field. We denote the category of associative, commutative, and unital $K$-algebras by $\alg_K$. If $A, B$ are objects of this category; we define $\hom_{\alg_K}(A, B)$ as the set of all the homomorphisms of unital $K$-algebras from $A$ to $B$.
If $R\in\alg_K$ and $M,M',N,N'$ are $R$-modules then, given $R$-module homomorphisms $T\colon M\to N$ and $T'\colon M'\to N'$, we denote by $T\o T'\colon M\o M'\to N\o N'$ the $R$-module homomorphism such that
$(T\o T')(m\o m')=T(m)\o T'(m')$, for any $m\in M$ and $m'\in M'$.
The category of groups is denoted by $\grp$ and the category of sets by $\set$. A functor $\F\colon\alg_K\to \set$  is referred to as a $K$-set functor, while a functor $\F\colon\alg_K\to\grp$ is called a $K$-group functor. 

If $U$ is any $K$-algebra, whether associative or not, one can consider its scalar extension $U_R:=U\otimes R$ for any $R\in\alg_K$. Recall that  $U_R$ is an $R$-algebra with product given by $(u\o r)(u'\o r')=uu'\o rr'$ for any $u,u'\in U$ and $r,r'\in R$. We can legitimately consider the group $\aut_R(U_R)$ (for any $R\in\alg_K$). This allows us to define a $K$-group functor $\affaut(U)\colon\alg_K\to\grp$ such that 
$\affaut(U)(R)=\aut_R(U_R)$ and, for $\a\in\hom_{\alg_K}(R,R')$, we define 
$\affaut(U)(\alpha)\colon \aut_R(U_R)\to \aut_{R'}(U_{R'})$ such that any $f\in\aut_R(U_R)$ maps to $g\in \aut_{R'}(U_{R'})$ defined as $g(u\otimes 1_{R'}):=(1\otimes\a)f(u\otimes 1_R)$ where $1_R$ is the unit of $R$ and similarly for $1_{R'}.$
 Other functors that we use are:
\begin{enumerate}[\rm (i)]
\item The \emph{multiplicative group} functor $\Gm\colon\alg_K\to \grp$, such that $\Gm(R)=R^\times.$ 
\item The \emph{$n$\textsuperscript{th} roots of the unity group} functor ${\boldmath\Mu_n} \colon\alg_K\to\grp$, such that $\Mu_n(R)=\{r\in R\colon r^n=1\}$.
\item The \emph{additive group} functor $\Ga\colon\alg_K\to\grp$, such that $\Ga(R)=(R,+)$, the underlying additive group of $R$. 
\end{enumerate}

In \cite{squares}, evolution algebras of dimension $2$ over arbitrary fields are classified into $9$ cases. The automorphisms and derivations of these algebras are classified in the same reference, to which we refer the reader for notations. The cases of the classification in \cite{squares} are (ruling out the zero product algebra $A_0$): $A_1$, $A_{2,\alpha}$, $A_{3,\alpha}$, $A_{4,\alpha}$, $A_{5,\alpha,\b}$, $A_5$, $A_6$, $A_7$ and $A_{8,\alpha}$. In this section, we describe the Hopf algebras representing the affine group scheme of automorphisms of these algebras. 
 For the perfect algebras in the above classification, the Hopf algebra representing $\affaut(A)$ can be obtained by applying the results from \cite{Elduque}. However, to include the non-perfect algebras as well, in some cases, we have chosen a direct approach.

\subsection{The $\text{A}_\text{1}$ algebra} \label{subsec_A1}
This is $A_1=Ke_1\oplus Ke_2$ with natural basis $\{e_1,e_2\}$ satisfying $e_i^2=e_i$ for $i=1,2$. 
For any $R\in\alg_K$, the scalar extension $(A_1)_R$ is identified with the $R$-algebra $(A_1)_R=Re_1\oplus Re_2$ with multiplication $e_i^2=e_i$ for $i=1,2$ (and of course $e_1e_2=e_2e_1=0$). 
Following the results presented by A. Elduque and A. Labra in \cite{Elduque}, one gets that the representing Hopf algebra of $\affaut(K^N)$,
where $K^N$ is the evolution algebra with component-wise product, $N\ge 1$, is $K^{N!}$ itself. In our case, $N=2$ with the standard Hopf algebra structure given in \cite[2.3, p. 16]{Waterhouse}. 


\subsection{The $\boldsymbol{\text{A}_{2,\alpha}}$ algebra}
For this algebra, the natural basis $\{e_1,e_2\}$ satisfies $e_1^2=e_2$ and $e_2^2=\alpha e_1$, with $\alpha \in K^\times$. The results of \cite[Corollary 3.4]{Elduque} give us a short exact sequence of group schemes $1\to \Mu_3\to\affaut(A_{2,\alpha})\to \boldsymbol{H}\to 1$ where $\boldsymbol{H}$ is the constant group scheme of $C_2$ (in case $\alpha=1$). However, in this case, we will determine the group $\affaut(A_{2,\alpha})$ in a direct way. If we consider $f\in\aut_R((A_{2,\alpha})_R)$, then there are elements $a,b,c,d\in R$ such that $f(e_1)=ae_1+be_2$ and $f(e_2)=ce_1+de_2$.

 To be an invertible homomorphism of algebras, we need that $f(e_1^2)=f(e_1)^2$, $f(e_2^2)= f(e_2)^2$, $f(e_1)f(e_2)=0$, and $ad-bc\in R^\times$. This implies that the parameters $a,b,c$ and $d$ must satisfy $c=\alpha b^2, a^2 =d,
a=d^2, c^2=\alpha b, ac=0$ and $ \alpha bd=0$.
 
Next, we can eliminate $d$ and $c$ to get 
\begin{equation}\label{petra}
 a=a^4, \,  b=\alpha b^4, \,
ab^2=ba^2=0, \, a^3-\alpha b^3\in R^\times.\end{equation}
Of course, we must remember that $d=a^2$ and $c=\alpha b^2$ are used to recover the matrix of $f$. 

Now, let $z:=a^3-\alpha b^3\in R^\times$. Then $z^2=a^3+\alpha b^3$ and hence $z^3=z$. Whence $z^2=1$, that is, $a^3+\alpha b^3=1$. So, we notice that the system in \eqref{petra} is equivalent to 
\begin{equation}\label{skol}
\begin{cases} 
ab=0, \\ a^3+\alpha b^3=1.\end{cases} \end{equation}
Any solution $(a,b)$ of \eqref{skol} is a solution of \eqref{petra} (since 
$(a^3-\alpha b^3)^2=a^6+\alpha^2 b^6=a^3+\alpha b^3=1$) which implies that $a^3-\alpha b^3\in R^\times$. Conversely, any pair $(a,b)$ satisfying \eqref{petra} also satisfies \eqref{skol} because $a^3-\alpha b^3\in R^\times$ gives $a^3+\alpha b^3=1$, as we have seen above. Then $ab=ab(a^3+\alpha b^3)=0$. Therefore, 
$$\text{Aut}_R((A_{2,\alpha})_R)\cong\left\{
\begin{pmatrix}a & b\\ \alpha b^2 & a^2\end{pmatrix}\colon a,b \in R, ab=0, a^3+\alpha b^3=1\right\}.$$

The representing Hopf algebra of the group scheme $\affaut(A_{2,\alpha})$ is given by
$\mathcal{H}_{2,\alpha} := K[a,b]/I$, where $I$ is the ideal $I=(ab, a^3+\alpha b^3-1)$. Consider the functor $G_{2,\alpha}\colon \alg_K \to \grp$ with $G_{2,\alpha}(R) := \{(a,b) \in R^2 \colon ab = 0, a^3+\alpha b^3 = 1\}$, where the product is given by $(a,b) * (c,d)  = (ac+\alpha bd^2,ad+bc^2)$. Multiplication induces the morphism $\Delta \colon \mathcal{H}_{2,\alpha} \to \mathcal{H}_{2,\alpha} \otimes \mathcal{H}_{2,\alpha}$ given by $\Delta(\bar a) = \bar a \otimes \bar a + \alpha \bar b \otimes \bar b^2 $ and $\Delta(\bar b) = \bar a \otimes \bar b + \bar b \otimes \bar a^2$. Thanks to matrix inverses, we can compute the inverse of an element $(a,b)$, which is given by $(a^2,b)$. This induces $S \colon \mathcal{H}_{2,\alpha} \to \mathcal{H}_{2,\alpha}$ with $S(\bar a) = \bar a^2 $ and $S(\bar b) = \bar b$. Finally, the augmentation map induced by the neutral element is $\varepsilon \colon \mathcal{H}_{2,\alpha} \to K$, with $\varepsilon(\bar a) = 1$ and $\varepsilon (\bar b) = 0$. It is not hard to prove that $\dim(\H_{2,\alpha})=6$.

So, a basis of $\H_{2,\alpha}$ is the set $\{\bar a,\bar a^2,\bar a^3,\bar b,\bar b^2,\bar b^3\}$. Furthermore, since $\bar a\bar b=0$ we have two orthogonal ideals $H_1:=K\bar a\oplus K\bar a^2\oplus K\bar a^3$ and $H_2:=K\bar b\oplus K\bar b^2\oplus K\bar b^3$. Notice that $\bar a^3$ and $\alpha \bar b^3$ are orthogonal idempotents whose sum is $1$.
In the particular case that $\root 3 \of\alpha\in K$, we can go a little further describing $\H_{2,\alpha}$. Indeed, in this case, it is not hard  to realize that $\H_{2,\alpha}\cong\H_{2,1}$, so that we can take $\alpha=1$. 
Then, each ideal $H_j$ with $j\in \{1,2\}$ is isomorphic to the group algebra of the cyclic group of three elements $C_3$. Thus, $\H_{2,1}\cong KC_3\oplus KC_3$ as $K$-algebras, where the first direct summand can be identified with $KC_3=K 1\oplus Ka\oplus Ka^2$ and the second $KC_3=K 1\oplus Kb\oplus Kb^2$. Notice that an involution of $\H_{2,1}$, module the above identification, is given by $a^*=b^2$ and $b^*=a^2$. Finally, an isomorphism $KC_3 \oplus KC_3 \cong KC_3 \otimes K^2$ is given by $(a^i,0)\mapsto a^i\o e_1$, $(0,a^i)\mapsto a^i\o e_2$, where $\{e_1,e_2\}$ is the canonical basis of $K^2$.


\subsection{The $\boldsymbol{\text{A}_{3,\alpha}}$  and $\boldsymbol{\text{A}_{4,\alpha}}$  algebras }
Fixing the natural basis $\{e_1,e_2\}$, we have that: $\text{A}_\text{3,$\alpha$}$  is given by  $e_1^2= e_1$; $e_2^2= \alpha e_1 + e_2$ and $\text{A}_\text{4,$\alpha$}$ is such that $e_1^2= \alpha e_2$, $e_2^2= e_1 + e_2$,  where $\alpha \in R^\times$. Applying \cite[Corollary 3.4]{Elduque}, as well as the computation of \rm {Diag}$(\Gamma)$ as in \cite[Section 2]{Elduque}, we get $\aut_R((A_{3,\alpha})_R)= \aut_R((A_{4,\alpha})_R)=\tiny\left\{ {\begin{pmatrix}
        1 & 0 \\
        0 & 1
    \end{pmatrix}} \right\} $. A direct approach to getting this result is also feasible by posing the equations of a generic automorphism and solving them. 


\subsection{The $\boldsymbol{\text{A}_\text{5,$\alpha$,$\b$}}$ algebra}

In this case, the natural basis $\{e_1,e_2\}$ is such that $e_1^2=e_1+\b e_2$, $e_2^2=\alpha e_1+e_2$ with $\alpha,\b\in K^\times$ and $\alpha\b\ne 1$.
Following once again \cite{Elduque}, we obtain that the affine group scheme {\rm Diag}($\Gamma$)=1, and thus, we have the exactness (see \cite[Formula (8)]{Elduque}) of the sequence $1\to 1\to \affaut(A_{5,\alpha,\b})\to \boldsymbol{H}\to 1$, where $\boldsymbol{H}$ is the constant group scheme associated to a certain 
subgroup $H$ of $\aut(\Gamma)$. In case $\alpha \neq \beta $ it turns out that $\boldsymbol{H}=1$, because $\aut(\Gamma)=1$. Consequently, the exact sequence is $1\to \affaut(A_{5,\alpha,\b})\to 1$ implying $\affaut(A_{5,\alpha,\b})=1$. Once again, from \cite{Elduque} one has that
the automorphism group  $\aut_R \left ((A_{5,\alpha,\b})_R\right)=\{1\}$ for any $R\in\alg_K$. 
In case $\alpha=\beta$, the group $\aut(\Gamma)$ is the cyclic group of order $2$, and one can check that we have a short exact sequence 
$1\to \affaut(A_{5,\alpha,\alpha})\to \boldsymbol{C}_2\to 1$ which gives $\affaut(A_{5,\a,\a})\cong\boldsymbol{C}_2$ and the representing Hopf algebra is $K^2$ with componentwise product and standard Hopf algebra structure, as in Subsection \ref{subsec_A1}.

\subsection{The $\text{A}_\text{5}$ algebra}
This is the two-dimensional algebra generated by the natural basis $\{e_1,e_2\}$, which verifies $e_1^2 = e_1 -e_2$ and $e_2^2 = e_2-e_1$. If the linear application $f \colon (A_5)_R \to (A_5)_R$ is defined by $f(e_1) = ae_1 + be_2$ and $f(e_2) = ce_1+de_2$ then, for $f$ to be an automorphism, the next equations have to be satisfied:
\begin{equation*}
       \left \{ 
\begin{array}{rl}
        (a-c)e_1+ (b-d)e_2 & = (a^2-b^2)e_1+(b^2-a^2)e_2, \\
         (c-a)e_1+(d-b)e_2  &= (c^2-d^2)e_1 + (d^2-c^2)e_2, \\
         0 & = (ac-bd)e_1+ (bd-ac)e_2, \\
         (ad-bc)u  & = 1. 
\end{array} 
\right.
\end{equation*}
This is equivalent to $      a-c =  a^2-b^2$, $ b-d =  b^2 -a^2$, $  c-a = c^2-d^2$, $d-b  = d^2 -c^2$, $0  = ac-bd$, $ (ad-bc)u   = 1. $

From these equations, and considering that we are in a unitary commutative ring (not necessarily a domain), we have that $b = c = 1-a$, $d = a$, and $2a -1 \in R^\times$. Next, we identify two cases depending on the characteristic of $K$.

\begin{enumerate}[\rm (i)]
    \item If $\text{char}(K) = 2$ we have
        $\aut\nolimits_R((A_5)_R) \cong \left \{ \footnotesize{\begin{pmatrix}
        a & 1-a \\
        1-a & a
    \end{pmatrix}} \colon a \in R \right \} $.
The representing algebra of $\affaut(A_5)$ is $\mathcal{H}_5 = K[x]$ and  $\affaut(A_5)$ is isomorphic to the affine group scheme $\mathcal{G}_{5} \colon \alg_K \to \grp$ given by $\mathcal{G}_5(R) = R$ with the product $a*b = 1+a+b$. The neutral element is $1$ and the inverse of $a$ is $a$. These three conditions induce the following morphisms. The application $\Delta \colon \mathcal{H}_{5} \to \mathcal{H}_{5}\otimes \mathcal{H}_{5}$ with $\Delta(x) = x \otimes 1 + 1 \otimes x + 1\otimes 1$, the morphism $S \colon \mathcal{H}_{5} \to \mathcal{H}_{5}$ with $S(x) = x$ and the augmentation $\varepsilon \colon \mathcal{H}_{5} \to K$ with $\varepsilon(x) = 1$. 
    Alternatively, we can give a group isomorphism 
     $\left\{\footnotesize{\begin{pmatrix}
        a & 1+a \\
        1+a & a
    \end{pmatrix}} \colon a \in R \right\}{\buildrel{\omega}\over\to} (R,+),$
    such that $\omega \left (\footnotesize{\begin{pmatrix}
        a & 1+a \\
        1+a & a
    \end{pmatrix}} \right)=1+a$. So, we have that $\aut_R((A_5)_R)\cong (R,+)$ and the representing
    Hopf algebra of this affine group scheme is $K[x]$. Of course, we have that $\affaut(A_5)\cong\Ga$. 
    \item If $\text{char}(K) \neq 2$ then
       $$\aut\nolimits_R((A_5)_R) \cong \left \{ {\footnotesize\begin{pmatrix}
        a & 1-a \\
        1-a & a
    \end{pmatrix}} \colon a \in R, 2a-1 \in R^\times  \right \}.$$
    In this case, $\mathcal{H}_5=K[x,y]/(2xy-y-1)$ is the representing algebra of $\affaut(A_5)$.  We consider the affine group scheme $\mathcal{G}_5\colon \alg_K \to \grp$ with $\mathcal{G}_5(R) = \{(a,b) \in R^2 \colon b(2a-1) = 1\}$. The product is given by $(a,b)*(c,d) = (1-a-c+2ac, bd)$. The neutral element is $(1,1)$ and the inverse of $(a,b)$ is $(ab,2a-1)$. This leads us to $\Delta \colon \mathcal{H}_5 \to \mathcal{H}_5\otimes \mathcal{H}_5$ with $\Delta(\bar x) = 1\otimes 1 - \bar x\otimes 1 - 1 \otimes \bar x + 2 \bar x \otimes \bar x$ and $\Delta (\bar y) = \bar y \otimes \bar y$. The coinverse morphism is given by $S\colon \mathcal{H}_5 \to \mathcal{H}_5$ with $S(\bar x) = \bar x \bar y$ and $S(\bar y) = 2\bar x -1$. Finally, the augmentation map is $\varepsilon\colon \mathcal{H}_5 \to K$ with $\varepsilon(\bar x) = 1$ and $\varepsilon(\bar y) = 1$. Alternatively, note that the determinant map provides an isomorphism $\aut_R((A_5)_R){\buildrel{\det}\over\to} R^\times$. So, we can say that $\aut_R((A_5)_R)\cong R^\times$. Hence, $\H_5\cong K[x^\pm]$ (the Laurent polynomials algebra) and $\affaut(A_5)\cong\Gm$. 
\end{enumerate}

\subsection{The $\text{A}_\text{6}$ algebra.}
For this algebra, the natural basis $\{e_1,e_2\}$ is such that $e_1^2 = 0$, $e_2^2 = e_1$. Let us consider the linear application $f \colon (A_6)_R \to (A_6)_R$ given by $f(e_1) = ae_1+be_2$ and $f(e_2) = ce_1+de_2$ with $a,b,c,d \in R$. This application needs to verify the following equations to be an automorphism of algebras: $0 = b^2 e_1, ae_1+be_2= d^2e_1, 0 = bd e_1, (ad-bc)u = 1 $
with $u \in R$. These equations are equivalent to $b =0$, $a = d^2$ and $d^3u = 1$. So, we have that $\aut\nolimits_R((A_6)_R) \cong \left \{ {\footnotesize\begin{pmatrix}
        d^2 & 0 \\
        c & d
    \end{pmatrix}} \colon c \in R, d \in R^\times \right \}$.
Notice that given two elements ${\footnotesize\begin{pmatrix}
        d^2 & 0 \\
        c & d
    \end{pmatrix}}$ and
    ${\footnotesize\begin{pmatrix}
        s^2 & 0 \\
        r & s
    \end{pmatrix}}$ in $\aut\nolimits_R((A_6)_R)$, their multiplication is ${\footnotesize\begin{pmatrix}
        d^2s^2 & 0 \\
        cs^2+dr & ds
    \end{pmatrix}}$. This tells us that $\aut\nolimits_R((A_6)_R) \cong (R \times R^\times,*)$ with $(c,d)*(r,s) := (cs^2+dr,ds)$.  The corresponding representing algebra is $\mathcal{H}_6 =K[x,y^{\pm}]$. The neutral element is $(0,1)$ and $(c,d)^{-1} = (-cd^{-3},d^{-1})$. These induce $\Delta \colon\mathcal{H}_6 \to \mathcal{H}_6\otimes \mathcal{H}_6$ given by $\Delta (x) = x\otimes y^2+y\otimes x $ and $\Delta(y) = y\otimes y$, $S \colon \mathcal{H}_6 \to\mathcal{H}_6$ given by $S(x) = -xy^{-3}$ and $S(y) = y^{-1}$, and the augmentation $\varepsilon \colon \mathcal{H}_6 \to K$ given by $\varepsilon(x) = 0$ and $\varepsilon(y) = 1$. 

 Observe that any matrix as in $\aut\nolimits_R((A_6)_R)$ can be factored in the form $\footnotesize\begin{pmatrix}d^2 & 0\\c & d\end{pmatrix}=\footnotesize\begin{pmatrix}d^2 & 0\\0 & d\end{pmatrix}\footnotesize\begin{pmatrix}1 & 0\\c/d & 1\end{pmatrix}$. There are obvious group isomorphisms 
from $\left\{\footnotesize\begin{pmatrix}d^2 & 0\\0 & d\end{pmatrix}\colon d\in R^\times\right\}$ onto $ R^\times $ and from $\left\{\footnotesize\begin{pmatrix}1 & 0\\x & 1\end{pmatrix}\colon x\in R\right\}$ onto $ (R,+)$, which induce an isomorphism $\aut_R((A_6)_R)\to R^\times\times R$. Recall the affine line group, that is, the group of all maps $R\to R$ of the form $x\mapsto dx+c$, where $d\in R^\times$ represents a homotecy and $c\in R$ a translation. In this group, each element is a homotecy followed by a translation, so we can see that the affine line group is isomorphic to $R^\times\times R$ and, seen as an affine group scheme, is nothing but $\Gm\ltimes\Ga$. The notation $\ltimes$ refers to the fact that $\Ga$, seen as the subgroup of translations in the affine line group, is a normal subgroup (the $\ltimes$ symbol is \lq\lq open\rq\rq\ pointing towards the normal subgroup).

\subsection{The $\text{A}_\text{7}$ algebra}
For this algebra, the natural basis $\{e_1,e_2\}$ is such that $e_1^2 = e_1$ and $e_2^2 = 0$. Following the same reasoning as we did previously, finding an automorphism of $(A_7)_R$ translates into solving the following system of equations $ae_1+be_2 = a^2 e_1, 0 = c^2e_1, 0 = ac e_1, (ad-bc)u  = 1$. This implies that $a^2=a$, $b = 0$, $c^2 = 0$, $ac = 0$ and $adu = 1$. The last equation implies $a,d \in R^\times$ and hence $c = 0$ and $a = 1$. So, $\aut\nolimits_R((A_7)_R) \cong \left \{ {\footnotesize\begin{pmatrix}
        1 & 0 \\
        0 & d
    \end{pmatrix}} \colon d \in R^\times \right \}$. This description implies that $ \aut\nolimits_R((A_7)_R) \cong \Gm(R)$. Another way of seeing this is the following: since $(A_7)_R=Re_1 \oplus Re_2$, $f(e_1) $ is idempotent, and $Re_2={\rm ann}((A_7)_R)$ (which is fixed by automorphisms), we conclude that $f(e_2)\in Re_2$ and $f(e_1)=e_1$. The corresponding representing algebra of $\Gm$ is well known and can be found in \cite[p. 9]{Waterhouse}.

\subsection{The $\boldsymbol{\text{A}_{8,\alpha}}$  algebra. }
For this algebra, the natural basis $\{e_1,e_2\}$ is such that $e_1^2 = e_1$ and $e_2^2 = \a e_1$ with $\a \in K^\times$. In this case, finding the automorphisms is equivalent to solving the equations $ ae_1+be_2 = (a^2+\a b^2)e_1, \a ae_1+\a be_2  = (c^2 + \a d^2)e_1, 0 = (ac+\a bd) e_1, (ad-bc)u  = 1$. This system  of equations  is equivalent to $a = a^2$, $b = 0$, $\a a = c^2 + \a d^2$, $ac= 0$ and $adu = 1$. The last equation implies that $a,d \in R^\times$ and, together with $a^2 = a$, we have $a = 1$ and $c = 0$. In this way, we have $\a = \a d^2$ and hence $d^2 = 1$. So,  $\aut\nolimits_R((A_{8,\a})_R) \cong \left \{ {\footnotesize\begin{pmatrix}
        1 & 0 \\
        0 & d
    \end{pmatrix}} \colon d \in R, d^2 = 1 \right \}$. Moreover, we have that $\aut\nolimits_R((A_{8,\a})_R) \cong \Mu_2(R)$. In this case, the corresponding Hopf algebra is $\H_{8,\a} = K[x]/(x^2-1)$ with $\Delta\colon \H_{8,\a} \to \H_{8,\a}\otimes \H_{8,\a}$ given by $\Delta(\Bar{x}) = \bar x\otimes \Bar x$, $S \colon \H_{8,\a} \to \H_{8,\a}$ defined as $S(\bar x) = \bar x$ and $\varepsilon\colon \H_{8,\a} \to K$ with $\varepsilon(\bar x) = 1$. 
Notice that if $\text{char}(K)=2$, then $\text{aut}(A_{8,\alpha})=1$. Additionally, in the case of characteristic $2$, we have $\mathcal{H}_{8,\alpha}\cong K[x]/(x-1)^2\cong K[x]/(x^2)$, which is the algebra of dual numbers $K(\epsilon):=K\oplus K\epsilon$, a $2$-dimensional algebra with $\epsilon^2=0$.

The results of this section are summarized in the Table~\ref{table2}.

\begin{table}[ht]
\begin{center}
\setcellgapes{2pt}
\makegapedcells
\settowidth\rotheadsize{Perfect}
\settowidth\rotheadsize{Non-Perfect}
\begin{tabular}{|C{0.5cm}|c|c|c|}
\hline 
 & $\boldsymbol{A}$ & $\boldsymbol{\aut(A)} $ &   \bf{Hopf algebra}\\
   \hline
  &           $A_1$ & $\mu_2(K)$  & $K^2$\\
   \cdashline{2-4}
              
  & $A_{2,1}$ & $\mu_3(K)\rtimes C_2(K)$ &   $KC_3 \otimes K^2$\\
   \cdashline{2-4}
  
  & $A_{3,\a}$ &  $1$   &  $K$\\
   \cdashline{2-4}

  & $A_{4,\a}$ &  $1$  &$K$\\
   \cdashline{2-4}

  & $A_{5,\a,\b}$, \tiny ($\a\ne\b$) &  $1$   &$K$\\
   \cdashline{2-4}

  & $A_{5,\a,\a}$ & $\m_2(K)$ & $K^2$\\
   \hline 
\multirow[t]{-5.5}{*}{\rothead{Perfect}}

&   $A_5,\ \tiny\car=2$ & $(K,+)$  & $K[x]$\\ 
  \cdashline{2-4}
   
 &  $A_5,\ \tiny\car\ne 2$ & $(K,\cdot)$  & $K[x^{\pm}]$\\
    \cdashline{2-4}
   
  & $A_6$ & $K^\times \times K$ & $K[x,y^{\pm}]$\\
    \cdashline{2-4}
   
  & $A_7$ & $K^\times$ & $K[x^{\pm}]$\\
   \cdashline{2-4}
   
  & $A_{8,\a},\ \tiny\car\ne 2$ & $\mu_2(K)$ & $K^2$ \\
   \cdashline{2-4}
   \multirow[t]{-4}{*}{\rothead{Non-Perfect}}
  & $A_{8,\a},\ \tiny\car= 2$ & $1$ &  $K(\epsilon)=\frac{K[x]}{(x^2)}$\\
  \hline

\end{tabular}
\caption{\small Hopf algebras of $\aut(A) $.}
\label{table2}
\end{center}
\end{table}
\medskip

\section{Associative representations of two-dimensional evolution algebras}\label{morrocotudo}

In this section, we describe associative and commutative representations of the algebras studied in the previous section. The main idea that we pursue is that the algebra $\U_p$ of the universal associative and commutative representation of an evolution algebra $A$ (relative to $p$) and the Hopf algebra representing the affine group scheme $\affaut(A)$ are related. To reveal this relation, we study some representations before we set up the notation.

\begin{definition}\label{fed} \rm
 Let $A$ be a two dimensional evolution algebra with natural basis $B=\{e_1,e_2\}$ such that
$e_i^2=\w_{1i} e_1+\w_{2i}e_2$ ($i=1,2$). Fix an element $p=\l_0 ab+\l_1 ab^*+\l_2 a^*b+\l_3 a^*b^*$ in the polynomial algebra with involution in two indeterminates $a$ and $b$. We define $\U_p:=K[x,y,x^*,y^*]/I$, where $I$ is $*$-generated by the polynomials $p(x,x)-\w_{11}x-\w_{21}y$, $p(y,y)-\w_{12}x-\w_{22}y$, $p(x,y)$ and $p(y,x)$.  The elements of $\mathcal{U}_p$ are denoted by $\overline{z}$, where $z\in K[x,y,x^*,y^*]$, and the product in $\mathcal{U}_p$ is given by  $\bar z \bar w = \overline{p(z,w)}$. The homomorphism $\rho\colon A\to\U_p$ such that $e_1\mapsto \bar x$ and $e_2\mapsto \bar y$ will be called the  {\textit {universal associative and commutative representation}} of $A$ (notice that it depends on the chosen polynomial $p$ and the natural basis $B$).
\end{definition}

We will use the symbol $\equiv$ to denote the equivalence relation in $\U_p$ of congruence modulo $I$. 
Notice that the polynomial $p$ in Definition \ref{fed} can be evaluated in any associative unital algebra with involution. If $(B,*)$ is such an algebra, then for any
$\sigma\colon A\to B$ such that $\sigma(uv)=p[\sigma(u),\sigma(v)]$ with $u,v\in A$, there is a unique $*$-homomorphism of unital associative algebras $F\colon\U_p\to B$ such that $F\rho=\sigma$. This universal associative and commutative representation $\rho$ is usually identified with the $*$-algebra $\U_p$, although it should be noted that this identification is an abuse of notation. Also, notice that this kind of universal representation is unital by birth, in contrast with the notion used in \cite{squares}. The reason for taking an unital algebra version is that we are interested in using some scheme-theoretic notions, for which it is essential to work on the category of associative, commutative, and unital algebras. However, it will also be useful to consider a non-unital version of $\U_p$. Define $\T_p$ as the quotient of the free associative and commutative $K$-algebra generated by $\{x,y,x^*,y^*\}$ by the ideal $I$ above. Thus, while $\U_p$ is unital, we do not have, a priori, a unit in $\T_p$. Furthermore, $\T_p$ can be seen as a subalgebra of $\U_p$ and $I\subset \T_p$ so that, we can consider the restriction
$\rho\colon A\to\T_p$ (recall $e_1\mapsto\bar x$ and $e_2\mapsto\bar y$). This associative and commutative representation has the property that for any other associative algebra with involution $(B,*)$ (not necessarily unital), and $\s\colon A\to B$ satisfying
$\s(uv)=p[\s(u),\s(v)]$ for $u,v\in A$,
there is a unique $*$-homomorphism of associative algebra $F\colon\T_p\to B$ such that $F\rho=\s$.
\begin{definition}\rm 
In the sequel, to shorten terminology, the unital associative and commutative algebra $\U_p$ associated to the evolution algebra $A$ will be called {\it universal unital $p$-algebra} of $A$. The (not necessarily unital) version $\T_p$  will be termed the {\it tight universal $p$-algebra} of $A$. If the representation $A\to\U_p$  (respectively $A\to\T_p$) is faithful, we will say that $\U_p$ is a {\it faithful universal unital $p$-algebra} (resp.  {\it faithful tight universal $p$-algebra)}.
\end{definition}

\begin{lemma}\label{leonsinchuleton}
Let $\E$ be a two-dimensional evolution algebra.

\begin{enumerate}[\rm (i)]
    \item If $\w_{i2}=0$ for $i=1,2$, then $\E$ is the zero-product algebra or isomorphic to $A_6$ or $A_7$.
    \item If $\w_{11}\ne 0$ and $\w_{21}=\w_{12}=0$, then $\E$ is isomorphic to $A_1$ or to $A_7$.
\end{enumerate}
\end{lemma}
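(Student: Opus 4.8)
The plan is to prove both items by exhibiting, in each case, an explicit change of natural basis that brings $\E$ into one of the normal forms $A_0$ (the zero-product algebra), $A_1$, $A_6$ or $A_7$ from the classification in \cite{squares}. I write the structure constants as $e_1^2=\w_{11}e_1+\w_{21}e_2$, $e_2^2=\w_{12}e_1+\w_{22}e_2$ and $e_1e_2=e_2e_1=0$. The one verification that recurs is that the modified bases are again natural bases; this is harmless because every transformation used is either a rescaling of the $e_i$ or the replacement of $e_1$ by $e_1+c\,e_2$, and in the latter case the product of the two new vectors equals $e_1e_2+c\,e_2^2$, which vanishes whenever $e_2^2=0$.

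For (i), assume $\w_{12}=\w_{22}=0$, so that $e_2^2=0$ and $e_1^2=\w_{11}e_1+\w_{21}e_2$. If $e_1^2=0$, then every product of basis elements vanishes and $\E$ is the zero-product algebra. Assume now $(\w_{11},\w_{21})\neq(0,0)$. If $\w_{11}=0$ then $\w_{21}\neq0$ and $e_1^2=\w_{21}e_2$; passing to the natural basis $\{\w_{21}e_2,\,e_1\}$ gives $(\w_{21}e_2)^2=0$ and $e_1^2=\w_{21}e_2$, which is exactly the table of $A_6$. If $\w_{11}\neq0$, set $f_1:=e_1+(\w_{21}/\w_{11})e_2$ and $f_2:=e_2$; using $e_2^2=0$ one gets $f_1^2=e_1^2=\w_{11}f_1$ and $f_1f_2=0$, so after replacing $f_1$ by $\w_{11}^{-1}f_1$ we obtain an idempotent together with a square-zero generator, i.e.\ the table of $A_7$.

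For (ii), assume $\w_{11}\neq0$ and $\w_{21}=\w_{12}=0$; then the structure matrix is diagonal, $e_1^2=\w_{11}e_1$ and $e_2^2=\w_{22}e_2$. Rescaling $e_1$ by $\w_{11}^{-1}$ makes $e_1$ idempotent. If $\w_{22}=0$, the second generator is square-zero and we reach the table of $A_7$; if $\w_{22}\neq0$, rescaling $e_2$ by $\w_{22}^{-1}$ makes it idempotent as well, yielding $e_i^2=e_i$, which is the table of $A_1$.

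The argument is a finite case split together with elementary computations, so I do not anticipate a real difficulty. The only step that will require care is the observation above that the bases we pass to are still natural bases — an arbitrary linear automorphism of the underlying space need not respect the evolution-algebra structure, and it is precisely the special transformations used (rescalings, and adding a multiple of the square-zero vector $e_2$ to $e_1$) that keep it intact.
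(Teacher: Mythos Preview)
Your proof is correct and follows essentially the same approach as the paper's: case-split on the vanishing of structure constants and pass to an explicit new natural basis. The only cosmetic difference is that in the case $\w_{11}\neq 0$ of (i) the paper takes $\{e_1^2,e_2\}$ as the new natural basis, which is just your $\{f_1,f_2\}$ up to the scalar $\w_{11}$.
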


\begin{proof}
    
For item (i), if $\w_{11}=0$ then $e_1^2=\w_{21} e_2$ and $e_2^2=0$.  Thus, if the product in $\E$ is non-trivial, we have that $\E\cong A_6$. Assume next that $\w_{11}\ne 0$. Then, $\{e_1^2,e_2\}$ is linearly independent. Indeed, 
from $\l e_1^2+\m e_2=0$ we get that
$\l\w_{11}e_1+(\l\w_{21}+\mu)e_2=0$ and hence $\l=\m=0$. This new basis is natural since $e_1^2 e_2=0$, and the multiplication relative to it is  
$(e_1^2)^2=(\w_{11}e_1+\w_{12}e_2)^2=(\w_{11})^2e_1^2$ and $e_2^2=0$. So, in this case, $A$ is isomorphic to $A_7$. 

For item (ii), let $k:=\frac{1}{\w_{11}}$. Since $e_1^2=\w_{11} e_1$, we obtain that $(ke_1)^2=ke_1$. So, we may assume without loss of generality that $e_1$ is idempotent. Now, if $\w_{22}\ne 0$, we may also assume that $e_2^2=e_2$ and we have that $\E$ is isomorphic to $A_1$. If $\w_{22}=0$ then $\E$ is isomorphic to $A_7$. 
\end{proof}

Next, we give some purely algebraic results about ideals in polynomial algebras that we will use in the sequel.
\begin{lemma}\label{hell} Let $I\triangleleft K[u,v]$ be generated by $uv$, $u^2-au-bv$, $v^2-cu-dv$ where $a,b,c,d\in K$.
Consider the $K$-algebra $R:=K[u,v]/I$.
Then, denoting by $\bar z$ the equivalence class of $z$ in $R$, we have that $\bar u$ and $\bar v$ are linearly dependent elements in $R$ except in the cases: 
\smallskip (i) $b\ne 0$, $c=d=0$,
\smallskip (ii) $b=a=0.$
\smallskip (iii) $b=0$, $a\ne 0$, $c=0$.
\end{lemma}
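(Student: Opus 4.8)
The plan is to analyze the quotient $R = K[u,v]/I$ by studying how the generators $uv$, $u^2-au-bv$, $v^2-cu-dv$ of $I$ force relations among $\bar u$ and $\bar v$, and to show that $\{\bar u,\bar v\}$ is linearly \emph{in}dependent precisely in the three listed cases. First I would observe that $R$ is spanned as a $K$-vector space by the monomials $1$, $\bar u$, $\bar v$ together with the images of higher-degree monomials, but the relations $\bar u^2 = a\bar u + b\bar v$, $\bar v^2 = c\bar u + d\bar v$ and $\bar u\bar v = 0$ let one reduce everything: every element of $R$ lies in $K1 + K\bar u + K\bar v$. So $R$ has dimension at most $3$, and $\{\bar u,\bar v\}$ is linearly dependent in $R$ exactly when $\dim R \le 2$ or when $\bar u,\bar v$ satisfy a nontrivial linear relation $\mu\bar u + \nu\bar v = 0$ with $(\mu,\nu)\neq(0,0)$.

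Next I would translate "$\mu\bar u + \nu\bar v = 0$ in $R$" into the membership $\mu u + \nu v \in I$ in $K[u,v]$, and exploit a grading/filtration argument: since the generators of $I$ have no constant term, $I$ contains no nonzero constant, and the degree-$1$ part of any element of $I$ comes only from $K$-linear combinations of $u^2-au-bv$ and $v^2-cu-dv$ after accounting for the quadratic parts. Concretely, if $\mu u+\nu v = f\cdot uv + g\cdot(u^2-au-bv) + h\cdot(v^2-cu-dv)$, comparing degrees forces $g,h \in K$ (constants) and $f$ to vanish, so $\mu u + \nu v = g(u^2 - au - bv) + h(v^2 - cu - dv)$ in $K[u,v]$; matching the degree-$2$ part gives $g u^2 + h v^2 = 0$, hence $g = h = 0$, and then $\mu = \nu = 0$. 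This shows $\bar u, \bar v$ are always linearly \emph{in}dependent when $\dim R = 3$; therefore dependence is equivalent to $\dim R \le 2$, i.e. to the monomials $1,\bar u,\bar v$ being linearly dependent, equivalently to the existence of a nonzero element of $I$ that is linear plus constant — and by the same degree count this forces such an element to be purely a nonzero $K$-combination $g(u^2-au-bv)+h(v^2-cu-dv)$ whose degree-$2$ part $gu^2+hv^2$ must already be nonzero, a contradiction unless the reduction collapses earlier.

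The cleaner route, which I would actually carry out, is a direct case analysis on the parameters, using that $\dim R \le 3$ with basis contained in $\{1,\bar u,\bar v\}$, and computing the actual dimension. Multiplying the relation $\bar u\bar v = 0$ by $\bar u$ and by $\bar v$ and substituting the quadratic relations yields the identities $a\bar u\bar v + b\bar v^2 = 0$ and $c\bar u^2 + d\bar u\bar v = 0$, i.e. $b(c\bar u + d\bar v) = 0$ and $c(a\bar u + b\bar v)=0$; similarly $\bar u\cdot\bar u^2$ and $\bar v\cdot \bar v^2$ give $\bar u^3 = a\bar u^2 + b\bar u\bar v = a(a\bar u + b\bar v)$ and $\bar v^3 = c(c\bar u+d\bar v)$, but also $\bar u^3 = \bar u^2\bar u$ is already covered. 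The relation $b(c\bar u+d\bar v)=0$ shows: if $b\neq 0$ then $c\bar u + d\bar v = 0$ in $R$; combined with the independence fact above (when $\dim R=3$) this forces $c = d = 0$ unless the independence already fails. Running through the sign/zero patterns of $(a,b,c,d)$ in this way isolates exactly the three exceptional families (i) $b\neq 0, c=d=0$; (ii) $a=b=0$; (iii) $b=0, a\neq 0, c=0$ as the cases where $\bar u,\bar v$ can fail to be dependent, and in every other case one exhibits the explicit nontrivial linear relation among $\bar u,\bar v$ (e.g. from $c\bar u + d\bar v = 0$ when $b\ne0$, or from the quadratic relations themselves when $b=0$).

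The main obstacle I anticipate is bookkeeping rather than conceptual: one must be careful that a relation like $c\bar u + d\bar v = 0$ in $R$ really is nontrivial (i.e. $(c,d)\neq (0,0)$) before concluding dependence, and one must verify in each of the three listed exceptional cases that $\{\bar u,\bar v\}$ genuinely \emph{is} independent — which requires producing, for each, an explicit associative commutative algebra (or a direct Gröbner-basis/normal-form argument) in which the images of $u$ and $v$ are linearly independent, to rule out hidden collapse. This is exactly the kind of normal-form computation that the subsequent sections carry out for the specific algebras $\U_p$, so I would organize the exceptional cases to match those later computations.
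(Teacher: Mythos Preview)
Your overall plan---derive linear relations among $\bar u,\bar v$ by multiplying $\bar u\bar v=0$ by $\bar u$ and $\bar v$ and then separately verify independence in the three exceptional families---is the right shape and matches the paper's case split. But there is a genuine error in your second paragraph, and it infects the later argument where you invoke ``the independence fact above''.

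The flawed step is the claim that if $\mu u+\nu v = f\cdot uv + g\cdot(u^2-au-bv)+h\cdot(v^2-cu-dv)$ in $K[u,v]$, then ``comparing degrees forces $g,h\in K$ and $f$ to vanish''. This is simply false: the generators are inhomogeneous, so high-degree coefficients $f,g,h$ can produce cancellations leaving only a linear remainder. Concretely, take $b\ne 0$ and $(c,d)\ne(0,0)$. The paper's own derivation shows $v^2\equiv 0$ by the identity
\[
bv^2 \;=\; (u-a)\cdot uv \;-\; v\cdot(u^2-au-bv),
\]
so $cu+dv = v^2-(v^2-cu-dv)\in I$ with coefficients $f=b^{-1}(u-a)$, $g=-b^{-1}v$, $h=-1$, none of them forced to be constant. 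Hence your degree argument cannot distinguish the dependent cases from the independent ones, and the sentence ``this shows $\bar u,\bar v$ are always linearly independent when $\dim R=3$'' is either circular (it is the definition of $\dim R=3$ once you note $I$ lies in the augmentation ideal) or based on the incorrect degree count.

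For the \emph{dependence} direction your third paragraph is fine and is essentially what the paper does: from $u\cdot uv$ and $v\cdot uv$ one extracts $b\bar v^2=0$ and $c\bar u^2=0$, hence $b(c\bar u+d\bar v)=0$ and $ca\bar u=0$, which immediately yields a nontrivial linear relation outside cases (i)--(iii). What is missing is the \emph{independence} verification in cases (i)--(iii). The paper does not exhibit auxiliary algebras or invoke Gr\"obner bases; instead it argues directly in $K[u,v]$: assume $\lambda u+\mu v=p\cdot uv+q\cdot(u^2-au-bv)+r\cdot(v^2-cu-dv)$ with $p,q,r\in K[u,v]$, then \emph{specialize} one variable to $0$ (e.g.\ set $v=0$), obtain an identity in $K[u]$ that forces one coefficient polynomial to be divisible by the remaining variable, substitute back, and iterate. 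This evaluation-at-$0$ trick is the missing ingredient; it avoids the pitfall of your degree argument precisely because it works one variable at a time in a genuine polynomial ring in one variable, where degree counting \emph{is} valid.
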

\begin{proof} Since $uv\in I$ and $u^2-au-bv\in I$ we have that $b v^2\in I$. If $b\ne 0$ then $v^2\in I$, implying $cu+dv\in I$. So $\bar u$ and $\bar v$ are linearly dependent unless $c=d=0$. We analyze this case: we have $I=(uv, u^2-au-bv, v^2)$ and so, if $\l u+\m v\in I$ for some scalars $\l,\m\in K$, we have that
$$\l u+\m v=p(u,v)uv+q(u,v)(u^2-au-bv)+r(u,v)v^2$$
in the polynomial algebra $K[u,v]$. Putting $v=0$, we get that
$\l u=q(u,0)(u^2-au)$.

 Since $(u^2-au)$ has degree 2, $q(u,0)$ is a polynomial in $u$, and $\lambda u$ has a degree less or equal to 1, we have that $q(u,0)$ must be zero. Hence, $\lambda u =0 $ and $\lambda =0$.

Since $q(u,0)=0$, we have that $q(u,v)=v\xi(u,v)$ for a certain polynomial $\xi(u,v)$. Consequently, 
$$\m v=p(u,v)uv+v \xi(u,v)(u^2-au-bv)+r(u,v)v^2 \text{ and so }$$
$$\m =p(u,v)u+ \xi(u,v)(u^2-au-bv)+r(u,v)v.$$
Making $u=v=0$ we get that $\mu=0$. Thus, $\bar u$ and $\bar v$ are linearly independent in this case.

Next, we study the case $b=0$. In such case, $I=(uv, u^2-a u, v^2-cu-dv)$. If $a=0$ then $I=(uv,u^2,v^2-cu-dv)$. Assume that 
$$\l u+\m v=p(u,v)uv+q(u,v) u^2+r(u,v)(v^2-cu-dv).$$
For $u=0$ we get that $\m v=r(0,v)(v^2-dv)$, which gives $\m=0$ and $r(0,v)=0$. Also $r(u,v)=u\xi(u,v)$ for some polynomial $\xi(u,v)$. Whence $\l=p(u,v)v+q(u,v)u+\xi(u,v)(v^2-cu-dv)$,
which gives $\l=0$ (putting $u=v=0$). In this case, $\bar u$ and $\bar v$ are linearly independent. If $a\ne 0$, since
$I=(uv, u^2-a u, v^2-cu-dv)$, we have that $c u^2\in I$ and hence $c a u\in I$ and
$c u\in I$. In case $c\ne 0$, we have that $u\in I$. So, $\bar u=0$ and $\bar v$ are linearly dependent. If $c=0$, we have that
$I=(uv, u^2-a u, v^2-dv)$ and it is easily seen, following the previous idea, that $\bar u$, $\bar v$ are linearly independent. 
\end{proof}

\begin{proposition}\label{rose}
Suppose that $x\equiv x^*$ and $y\equiv y^*$. If the universal representation $\rho\colon A\to\U_p$ is faithful, then $A\cong A_1, A_6$ or $A_7$ (i.e., $A$ is associative).
\end{proposition}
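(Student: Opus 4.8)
The plan is to use the two congruences to collapse $\U_p$ onto a quotient of the two-variable polynomial ring and then recognize $A$ from the shape of that quotient via Lemmas~\ref{hell} and~\ref{leonsinchuleton}.

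First I would record what the hypothesis does to $\U_p$. The congruences $x\equiv x^*$ and $y\equiv y^*$ say exactly that $x-x^*,y-y^*\in I$, so the involution induced on $\U_p$ is the identity and $\U_p\cong K[x,y]/J$, where $J$ is the image of $I$ under the substitution $x^*\mapsto x$, $y^*\mapsto y$. That substitution sends $p(z,w)$ to $(\l_0+\l_1+\l_2+\l_3)\,zw$ for $z,w\in\{x,y\}$; so, writing $\lambda:=\l_0+\l_1+\l_2+\l_3$, all the defining relations of $I$ (starred and unstarred) collapse and
\[ J=(\lambda x^2-\w_{11}x-\w_{21}y,\ \lambda y^2-\w_{12}x-\w_{22}y,\ \lambda xy). \]
Under this identification $\rho(e_1)=\bar x$ and $\rho(e_2)=\bar y$, so the faithfulness of $\rho$ is exactly the linear independence of $\bar x$ and $\bar y$ in $K[x,y]/J$. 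I would also note that $A$ cannot be the zero-product algebra: otherwise $I$ is generated by homogeneous polynomials of degree $2$ and hence contains no nonzero element of degree $\le 1$, in particular not $x-x^*$, contrary to the hypothesis.

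Next I would split according to $\lambda$. If $\lambda=0$, then $J$ is generated by the two linear forms $\w_{11}x+\w_{21}y$ and $\w_{12}x+\w_{22}y$, at least one of which is nonzero by the previous remark; this forces a nontrivial linear relation between $\bar x$ and $\bar y$, contradicting faithfulness. Hence $\lambda\ne0$, and, $\lambda$ being a unit, $J$ is equally generated by $x^2-(\w_{11}/\lambda)x-(\w_{21}/\lambda)y$, $y^2-(\w_{12}/\lambda)x-(\w_{22}/\lambda)y$ and $xy$ --- that is, $J$ is precisely an ideal of the kind treated in Lemma~\ref{hell}, with $(a,b,c,d)=(\w_{11}/\lambda,\w_{21}/\lambda,\w_{12}/\lambda,\w_{22}/\lambda)$. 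Linear independence of $\bar x$ and $\bar y$ then forces, by Lemma~\ref{hell}, one of its three exceptional configurations: (i)~$\w_{21}\ne0$ and $\w_{12}=\w_{22}=0$; (ii)~$\w_{11}=\w_{21}=0$; or (iii)~$\w_{11}\ne0$ and $\w_{21}=\w_{12}=0$.

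Finally I would identify $A$ in each configuration using Lemma~\ref{leonsinchuleton}. In (i) we have $e_2^2=0$, so Lemma~\ref{leonsinchuleton}(i) gives $A\cong A_6$ or $A_7$. In (iii) the hypotheses of Lemma~\ref{leonsinchuleton}(ii) hold verbatim, so $A\cong A_1$ or $A_7$. In (ii) we have $e_1^2=0$, so reading $A$ with respect to the natural basis $\{e_2,e_1\}$, whose second vector squares to $0$, Lemma~\ref{leonsinchuleton}(i) again gives $A\cong A_6$ or $A_7$. In every case $A$ is isomorphic to $A_1$, $A_6$ or $A_7$, that is, $A$ is associative. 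The step I expect to be most delicate is the first one: verifying carefully that imposing $x=x^*$ and $y=y^*$ really reduces the $*$-generated ideal $I$ to the ideal $J$ above (so that, in particular, the starred relations contribute nothing new) and that the class of $x$ continues to represent $\rho(e_1)$. Everything after that is routine matching of hypotheses against Lemmas~\ref{hell} and~\ref{leonsinchuleton}, modulo the harmless relabeling of the natural basis needed in case (ii).
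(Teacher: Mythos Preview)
Your proposal is correct and follows essentially the same route as the paper's proof: both reduce $\U_p$ to $K[x,y]/J$ with $J=(\lambda x^2-\w_{11}x-\w_{21}y,\ \lambda y^2-\w_{12}x-\w_{22}y,\ \lambda xy)$, split on whether $\lambda=\sum\l_i$ vanishes, and then invoke Lemma~\ref{hell} followed by Lemma~\ref{leonsinchuleton} in the three surviving configurations. Your extra remark that the hypothesis $x\equiv x^*$ already excludes the zero-product algebra is a small bonus the paper handles instead by its standing convention of ruling out $A_0$.
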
 
\begin{proof}
The ideal $I$ is generated by 
$x-x^*$, $y-y^*$, $\l xy$, $\l x^2-\w_{11} x-\w_{21} y$ and 
$\l y^2-\w_{12} x-\w_{22} y$,
where $p(a,b)=\l_0 ab+\l_1 ab^*+\l_2a^*b+\l_3a^*b^*$ and $\l:=\sum\l_i$. We analyze two cases:\par
\item{(a)} $\l=0$. Then $\w_{11} x+\w_{21}y\in I$ and $\w_{12} x+\w_{22}y\in I$. Ruling out the algebra of zero product (in which all the $\w_{ji}=0$), we have that some $\w_{ji}\ne 0$. Thus, either 
    $\w_{11}x+\w_{21}y\in I$ with some scalar $\w_{11},\w_{21}$ nonzero, or 
    $\w_{12}x+\w_{22}y\in I$ with some scalar $\w_{12},\w_{22}$ nonzero. In any case, $\bar x$ and $\bar y$ are linearly dependent elements of $\U_p$, and the universal associative $*$-representation of $A$ is not faithful. 
 \item{(b)} $\l\ne 0$. Then 
 $xy\in I$,
 $x^2-\l^{-1}\w_{11} x-\l^{-1}\w_{21} y\in I$ and 
$y^2-\l^{-1}\w_{12} x-\l^{-1}\w_{22} y \in I$.
So, $\U_p=K\bar 1+K\bar x+K\bar y$ with 
$\bar x\bar y=0$ and $\bar x^2=\l^{-1}\w_{11} x+\l^{-1}\w_{21} y$,
$\bar y^2=\l^{-1}\w_{12} x+\l^{-1}\w_{22} y$. Next, we apply Lemma~\ref{hell} with $a=\l^{-1}\w_{11}$, $b=\l^{-1}\w_{21}$, $c=\l^{-1}\w_{12}$ and $d=\l^{-1}\w_{22}$. We conclude that $\bar x$ and $\bar y$ are linearly dependent except in the cases:
\begin{enumerate}[\rm (i)]
\item  $\w_{21}\ne 0$, $\w_{12}=\w_{22}=0$. Then, $A\cong A_6$.
\item $\w_{11}=\w_{21}=0$. Then, if $A$ is not a zero-product algebra, $A\cong A_6$ or $A\cong A_7$.
\item $\w_{11}\ne 0$, $\w_{21}=\w_{12}=0$. Then $A\cong A_1$ or $A\cong A_7$,
\end{enumerate}
where to identify $A$ we have applied Lemma~\ref{leonsinchuleton}. 
 \end{proof}

 \begin{remark}\rm 
If $x\equiv x^*$ and $\w_{21}\ne 0$, then $y\equiv y^*$. This is because $\l x^2-w_{11} x-w_{21} y\in I$, where $\l=\sum_{i=0}^3 \l_i$. Applying $*$ we get that $\l x^2-w_{11}x-w_{21} y^*\in I$ and so, if $\w_{21}\ne 0$, we conclude that
$y\equiv y^*$. Similarly, we have that if $y\equiv y^*$ and $\w_{12}\ne 0$ then $x\equiv x^*$.
\end{remark}

\begin{lemma}\label{eqp}Assume that $\E$ is a perfect two-dimensional evolution algebra with a faithful universal unital $p$-algebra $\U_p$ such that $x^2 \equiv (x^*)^2$ and $y^2 \equiv ( y^*)^2$ in $\U_p$. 
Then $A\cong A_1, A_6$ or $A_7$ (i.e., $A$ is associative). Consequently, the Hopf algebra $\H$ representing the group scheme $\aut(\E)$ satisfies $\H\ne K$. 
\end{lemma}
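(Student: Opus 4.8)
The plan is to reduce to Proposition~\ref{rose}: I would upgrade the hypotheses $x^2\equiv(x^*)^2$ and $y^2\equiv(y^*)^2$ to the full congruences $x\equiv x^*$ and $y\equiv y^*$, using that $A$ is perfect. Recall first that, since $A^2$ is spanned by $\{e_1^2,e_2^2\}$, perfectness of $A$ amounts to $\w_{11}\w_{22}-\w_{12}\w_{21}\neq 0$; this will be invoked only at the very end.

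The key step is a computation in the commutative polynomial algebra $K[x,y,x^*,y^*]$. With $p(a,b)=\l_0ab+\l_1ab^*+\l_2a^*b+\l_3a^*b^*$ one checks that $p(z,z)-p(z^*,z^*)=(\l_0-\l_3)\bigl(z^2-(z^*)^2\bigr)$ for $z=x$ and $z=y$, so the right-hand side lies in $I$ by the hypotheses on $\U_p$. On the other hand, both $p(x,x)-\w_{11}x-\w_{21}y$ and its $*$-image $p(x^*,x^*)-\w_{11}x^*-\w_{21}y^*$ belong to $I$ (and similarly for $y$, with the generator $p(y,y)-\w_{12}x-\w_{22}y$). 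Comparing classes in $\U_p$ therefore yields $\w_{11}(\bar x-\bar x^*)+\w_{21}(\bar y-\bar y^*)=0$ and $\w_{12}(\bar x-\bar x^*)+\w_{22}(\bar y-\bar y^*)=0$.

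I would then regard these two relations as a homogeneous $K$-linear system in the elements $\bar x-\bar x^*$ and $\bar y-\bar y^*$ of $\U_p$; its coefficient matrix has determinant $\w_{11}\w_{22}-\w_{12}\w_{21}\neq 0$, hence is invertible over $K$, forcing $\bar x=\bar x^*$ and $\bar y=\bar y^*$, i.e. $x\equiv x^*$ and $y\equiv y^*$ in $\U_p$. Since $\rho\colon A\to\U_p$ is faithful, Proposition~\ref{rose} now gives $A\cong A_1, A_6$ or $A_7$ (and, as $A$ is perfect while $A_6$ and $A_7$ are not, necessarily $A\cong A_1$). Finally, for each of $A_1,A_6,A_7$ the Hopf algebra representing $\aut(A)$ was computed in Section~\ref{Ivanov} to be $K^2$, $K[x,y^{\pm}]$ and $K[x^{\pm}]$ respectively, none of which is isomorphic to $K$; hence $\H\neq K$.

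The one delicate point is the passage from the square congruences to $x\equiv x^*$ and $y\equiv y^*$, and this is precisely where perfectness enters in an essential way: if the structure matrix were singular, the two relations above need not imply $\bar x=\bar x^*$. Everything else is routine manipulation of the generators of $I$ together with citations of Proposition~\ref{rose} and the computations of Section~\ref{Ivanov}.
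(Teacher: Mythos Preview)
Your argument is correct and follows essentially the same route as the paper: subtract $p(x,x)-\w_{11}x-\w_{21}y\in I$ from its $*$-image, use the hypothesis $x^2\equiv(x^*)^2$ (and similarly for $y$) to kill the quadratic part, then invoke invertibility of the structure matrix to deduce $x\equiv x^*$, $y\equiv y^*$ and feed this into Proposition~\ref{rose}. Your parenthetical observation that perfectness actually forces $A\cong A_1$ (since $A_6$ and $A_7$ are not perfect) is a small refinement not made explicit in the paper, and your concluding sentence justifying $\H\neq K$ by citing Section~\ref{Ivanov} fills in a step the paper leaves implicit.
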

\begin{proof} Consider $p(a,b)=\l_0 ab+\l_1 ab^*+\l_2a^*b+\l_3a^*b^*$. Since $p(x,x)-\w_{11}x-\w_{21} y \in I$, we have that
\begin{align*}
\l_0x^2+(\l_1+\l_2) xx^*+\l_3(x^*)^2-\w_{11}x-\w_{21}y&\in I \text{ and }\cr
\l_3x^2+(\l_1+\l_2) xx^*+\l_0(x^*)^2-\w_{11}x^*-\w_{21}y^*&\in I.
\end{align*}
So, we have that $(\l_0-\l_3)(x^2-(x^*)^2)+\w_{11}x^*+\w_{21}y^*-\w_{11}x-\w_{21} y\in I$ and, from the fact that $p(y,y)-\w_{12}x-\w_{22} y \in I$, we get that $(\l_0-\l_3)(y^2-(y^*)^2)+\w_{12}x^*+\w_{22}y^*-\w_{12}x-\w_{22} y\in I$. Since 
$x^2-(x^*)^2$ and $y^2-(y^*)^2\in I$ we obtain that 
\begin{align}
\w_{11}x^*+\w_{21} y^* \equiv &\ \w_{11} x+\w_{21} y \text{ and }\cr
\w_{12}x^*+\w_{22} y^* \equiv &\  \w_{12} x+\w_{22} y.
\end{align}
Since $\E$ is perfect, the matrix $(\w_{ji})$ is invertible and hence $x^*\equiv x$ and $y^*\equiv y$. Now, applying Proposition~\ref{rose}, we get the claimed conclusion.

\end{proof}

Next, we compute the $\U_p$ and $\T_p$ algebras associated to perfect evolution algebras $A$ of dimension $2$ and relate them with $\H$, the representing Hopf algebra of the automorphism group scheme of $A$. We also compare
the groups $\aut(A)$ and $\aut^*(\T_p)$ in all cases.

\subsection{The $\text{A}_\text{1}$ algebra}
We proved before that the Hopf algebra representing $\affaut(A_1)$ is isomorphic to $\H_1=K[x]/(x^2-x)$. This algebra is $2$-dimensional and isomorphic to $K^2$ with componentwise operations. Consider the identity involution in this algebra and the product $p(a,b)=ab^*$. It is easy to verify that the map
$\rho\colon A_1\to\H_1$ given by $e_1\mapsto \bar x$ and $e_2\mapsto \bar 1-\bar x$ is a faithful associative and commutative representation of $A_1$ in $\H_1$, relative to $p$ and the identity involution on $\H_1$. Consequently, the universal associative and commutative representation of $A_1$, with the product $(a,b){\buildrel{p}\over\mapsto} ab^*$, is faithful. Let us prove that $\U_p$ can be identified with
$K[x,y]/I$, where $I=(x^2-x,y^2-y,xy)$. 
First, let us check that $\{\bar 1,\bar x,\bar y\}$ is a $K$-basis of $K[x,y]/I$. We begin proving that $x\notin I$. If $x\in I$ then we have an identity in $K[x,y]$ of the form $x=p(x^2-x)+q(y^2-y)+r xy$, for some polynomials $p,q,r$. Then, making $x=0$, we get that $0=q(0,y)(y^2-y)$ and hence
$q(0,y)=0$, so that $q(x,y)=x q_1(x,y)$ for some other polynomial $q_1$. But then 
$1=p(x-1)+q_1(y^2-y)+ry$ and, for $y=0$, we get $1=p(x,0)(x-1)$, which is impossible in the polynomial ring. In a similar way, $y\notin I$.
Now we prove that $1-x-y\notin I$: on the contrary $1-x-y=p(x^2-x)+q(y^2-y)+r xy$ again for some polynomials $p,q$ and $r$. But, making $y=0$ we get that $1-x=p(x,0)(x^2-x)$ and therefore $-1=p(x,0)x$, which is also impossible. Now, we prove that $\{\bar 1,\bar x,\bar y\}$ is $K$-linearly independent: if $\a\bar 1+\b\bar x+\gamma\bar y=0$
for scalars $\a,\b,\gamma\in K$,
then $(\a+\b)\bar x=0$ and hence $\a+\b=0$ because $\bar x\ne 0$. Similarly, $(\a+\gamma)\bar y=0$, which implies that $\a+\gamma=0$. So, $\b=\gamma=-\a$. If
$\a=0$ then $\b=\gamma=0$. On the contrary,
$\bar 1-\bar x-\bar y=0$ so that $1-x-y\in I$, which is not possible. 
Now that we have that $K[x,y]/I$ is a $3$-dimensional algebra with basis the classes of $1, x$ and $y$, let us prove that we can identity $\U_p$ with $K[x,y]/I$. 
In fact, we define $\rho\colon A_1\to K[x,y]/I$ by $(1,0)\mapsto\bar x$ and $(0,1)\mapsto\bar y$. This satisfies $\rho(st)=p[\rho(s),\rho(t)]$ ($s,t\in A_1$) and it is a monomorphism. Given any other associative and commutative unital algebra with involution $(B,*)$, and $\sigma\colon A_1\to (B,*)$ satisfying $\sigma(st)=p[\s(s),\s(t)]$ for any $s,t\in A_1$, we can define $F\colon K[x,y]/I\to B$ by $F(\bar x)=\sigma(1,0)$ and $F(\bar y)=\sigma(0,1)$.
Notice that for any $z\in A_1$ we have $\s(z)=\s(z\cdot 1)=\s(z)\s(1)^*$ but also $\s(z)=\s(1\cdot z)=\s(1)\s(z)^*$. Therefore,
$\s(z)^*=(\s(1)\s(z)^*)^*=\s(z)\s(1)^*=\s(z)$ for any $z\in A_1$. Then, it is not difficult to prove that $F\colon K[x,y]/I\to B$ is a homomorphism of associative algebras. Moreover, if we endow $K[x,y]/I$ with the identity involution, we have
$F(\bar x^*)=F(\bar x)=\s(1,0)=\s(1,0)^*=F(\bar x)^*$ and in a similar fashion $F(\bar y^*)=F(\bar y)^*$. Thus, $F$ is a $*$-homomorphism
$K[x,y]/I\to (B,*)$,  and it is the unique one that satisfies $F\rho=\sigma$. Therefore, from this point on, we identify $\U_p$ with $K[x,y]/I$. Modulo this identification,  $\T_p=K\bar x\oplus K\bar y$, where we know
that $\bar x$, $\bar y$ are orthogonal idempotents. Moreover, $\bar x+\bar y$ is the unit of $\T_p$. Hence, $\T_p\cong A_1\cong\H_1$ (as algebras), and we have proved the following (natural) result.
\begin{proposition}
    The tight universal $p$-algebra of $A_1$ is $A_1$ itself. Furthermore,  $\T_p\cong A_1\cong\H_1$ and 
    $\aut^*(\T_p)=\aut(\T_p)\cong\aut(A_1)$ is the cyclic group of order $2$.
\end{proposition}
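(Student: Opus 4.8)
The algebra isomorphisms $\T_p\cong A_1\cong\H_1$ are exactly what the computation preceding the statement produces: $\T_p=K\bar x\oplus K\bar y$ with $\bar x,\bar y$ orthogonal idempotents summing to the unit, hence $\T_p\cong K\times K$; and both $A_1$ (via its structure relations $e_i^2=e_i$, $e_1e_2=0$) and $\H_1=K[t]/(t^2-t)=K\bar t\oplus K(\bar 1-\bar t)$ are likewise isomorphic to $K\times K$. So the only thing left is the automorphism statement, and the plan for it is as follows.

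Since $A_1$ and $\T_p$ are isomorphic as $K$-algebras, $\aut(\T_p)\cong\aut(A_1)$ is immediate. Next, the involution $\T_p$ inherits from $\U_p$ is the identity: in the model $\U_p\cong K[x,y]/I$ used above (with $I=(x^2-x,y^2-y,xy)$) the involution was shown to be trivial, being forced by $\s(z)^*=\s(z)$ for every $z\in A_1$ and every representation $\s$. Hence every $K$-algebra automorphism of $\T_p$ commutes with $*$, that is, $\aut^*(\T_p)=\aut(\T_p)$. It remains to compute $\aut(K\times K)$: a unital $K$-algebra automorphism fixes $0$ and $1=(1,1)$ and sends idempotents to idempotents, and over a field the only idempotents of $K\times K$ are $(0,0),(1,1),(1,0),(0,1)$; so such an automorphism permutes $\{(1,0),(0,1)\}$ and is determined by that permutation. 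Since both the identity and the coordinate swap are algebra automorphisms, $\aut(K\times K)\cong C_2$. Assembling these, $\aut^*(\T_p)=\aut(\T_p)\cong\aut(A_1)\cong C_2$, the cyclic group of order $2$. (Alternatively, $\aut(A_1)$ can be computed by hand: $f(e_1)=ae_1+be_2$, $f(e_2)=ce_1+de_2$ forces $a,b,c,d\in\{0,1\}$, $ac=bd=0$ and $ad-bc\in K^\times$, leaving only the identity and the transposition.)

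I do not anticipate any real obstacle here; once the model $\U_p\cong K[x,y]/I$ with trivial involution is in hand, the rest is bookkeeping. The three points worth an explicit line are: (a) passing from $\U_p$ to $\T_p$ adds no new generators precisely because the involution is trivial, so $\bar x^*,\bar y^*$ collapse onto $\bar x,\bar y$; (b) the equality $\aut^*(\T_p)=\aut(\T_p)$ relies on that same triviality; and (c) the enumeration of $\aut(K\times K)$ uses only that $K$ is a field, so the conclusion holds uniformly in $\car(K)$.
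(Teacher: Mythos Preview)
Your proposal is correct and follows essentially the same approach as the paper: the paper's argument is entirely contained in the discussion preceding the proposition (establishing $\U_p\cong K[x,y]/I$ with trivial involution, whence $\T_p=K\bar x\oplus K\bar y\cong K^2$), and the proposition is stated as a summary with no additional proof given. Your write-up actually supplies more detail than the paper does on the automorphism side---the paper never spells out why $\aut(K\times K)\cong C_2$ or why $\aut^*(\T_p)=\aut(\T_p)$---so your idempotent-permutation argument and your explicit use of the triviality of $*$ are welcome additions rather than deviations.
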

As an obvious consequence, the representing algebra of $\affaut(\T_p)$ agrees with that of $\affaut(A_1)$ and $A_1$ admits a tight universal $p$-algebra which is $\H_1$.

 For the next subsection we should take into account that any representation of an evolution $K$-algebra $\mathcal E$, namely, $\rho\colon \mathcal{E}\to A$ (where $A$ is an associative commutative $K$-algebra with involution), gives rise to a representation $\rho_F\colon \mathcal{E}_F\to A_F$ of the extended $F$-algebra $E_F$ relative to any extension field $K\subset F$. Moreover if $\rho$ is a monomorphism then $\rho_F$ is it also. But also, if we have an evolution $K$-algebra $\mathcal{E}$ such that a certain scalar extension $\mathcal{E}_F$ has a representation $\sigma\colon\mathcal{E}_F\to B$ (where $B$ is an $F$-algebra with involution), then the composition
$\mathcal{E}\to B$ such that $x\mapsto\sigma(x\otimes 1)$ is a representation on $B$ (and it is faithfull if $\sigma$ is). This justifies jumping to the algebraic closure eventually.

\subsection{The $\boldsymbol{\text{A}_{2,\alpha}}$ algebra}
For this algebra, the natural basis $\{e_1,e_2\}$ satisfies $e_1^2=e_2$ and $e_2^2=\a e_1$, with $\a \in K^\times$. Extending scalars, if necessary,  we may take $\a=1$ because the elements $e_1':=\root 3\of {\a^{-1}} e_1$ and
$e_2':=\root 3\of {\a^ {-2}}e_2$, satisfy the multiplication relations of $A_{2,1}$. Thus, we consider the algebra $A_{2,1}$ from the beginning and study its associative representations. In previous sections, we observed an involution in the Hopf algebra $\H_{2,1}$ such that $a^*=b^2$ and $b^*=a^2$. If we consider the product $\H_{2,1}\times \H_{2,1}\to\H_{2,1}$ such that $(x,y)\mapsto x^*y^*$, then we have a monomorphism $\rho\colon A_{2,1}\to (\H_{2,1},p)$ induced by $\rho(e_1)=a$ and $\rho(e_2)=b$. Thus, the universal unital $p$-algebra $\U_p$ (relative to $p\colon (x,y)\mapsto x^*y^*$) is faithful and $\H_{2,1}$ is a quotient of $\U_p$. For this algebra, we have that $\U_p=K[x,y,x^*,y^*]/I$, where $I=(x^2-y^*,y^2-x^*,xy)_*$. Moreover, $\U_p:=K[x,y,x^*,y^*]/I\cong K[x,y]/(x^4-x,y^4-y,xy)$.
Notice that a $K$-basis of $\U_p$ is given by
$\{\bar 1,\bar x,\bar x^2,\bar x^3,\bar y,\bar y^2,\bar y^3\}$, which implies that $\dim(\U_p)=7$. The tight universal $p$-algebra $\T_p$ is the $K$-linear span
of $\mathcal{B}=\{\bar x,\bar x^2,\bar x^3,\bar y,\bar y^2,\bar y^3\}$, which is $6$-dimensional and unital (with unit $\bar x^3+\bar y^3$). 
The involution $*$ of the tight $p$-algebra $\T_p$ is the induced by $\bar x^*=\bar y^2$. It can be checked that the matrix of $*$ relative to $\mathcal{B}$ is $\small\begin{pmatrix} 0 & m\\ m & 0\end{pmatrix}$, where $m=\small \begin{pmatrix}0 & 1 & 0\\1 & 0 & 0\\0 & 0 &1\end{pmatrix}.$

Recalling the algebra $\H_{2,1}$, we see that $\T_p\cong\H_{2,1}$ and $\U_p$ is just the unitization of $\T_p$. If $K$ is algebraically closed and $\car(K)\ne 3$ then, for the group $\aut(A_{2,1})$, we have 
$\aut(A_{2,1})=\{1,\bar\omega,\bar\omega^2,\s,\s\bar\omega,\s\bar\omega^2\}$, where $\bar\omega=\hbox{Diag}(\omega,\omega^2)$ and 
$\s=\tiny\begin{pmatrix}0 & 1\\ 1 & 0\end{pmatrix}$, where $\omega$ is a primitive cubic root of $1$. We can identify $\aut(A_{2,1})$ (as an abstract group) with the symmetric group $S_3$. Since $\rho$ is faithful, we can embed $\aut(A_{2,1})$ into $\aut(\T_p)$. 

The subgroup $\aut^*(\T_p)$ of $*$-automorphisms admits the matrix representation 
$$\left \{\begin{pmatrix}a & 0\\0 & mam\end{pmatrix}\colon a\in S_3 \right \}\sqcup \left \{\begin{pmatrix}0 & b\\mbm & 0\end{pmatrix}\colon b\in S_3 \right \}$$
relative to the basis $\mathcal{B}$ (using block matrices). Thus, we have an isomorphism $\aut^*(\T_p)\cong \aut(A_{2,1})\rtimes \mathbb{Z}_2$.

\begin{theorem}
For the $A_{2,1}$ algebra, the tight universal $p$-algebra $\T_p$ is isomorphic to the representing Hopf algebra of $\affaut(A_{2,1})$, which is $\H_{2,1}$. The group $\aut^*(\T_p)$ can be writen as
$\aut^*(\T_p)\cong \aut(A_{2,1})\rtimes \mathbb{Z}_2$.
\end{theorem}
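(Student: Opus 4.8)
The plan is to prove the two assertions separately, both by leaning on the universal property of $\T_p$ applied to the faithful representation $\rho\colon A_{2,1}\to(\H_{2,1},p)$ with $\rho(e_1)=\bar a$, $\rho(e_2)=\bar b$, relative to $p(u,v)=u^*v^*$ and the involution $\bar a^*=\bar b^2$, $\bar b^*=\bar a^2$ of $\H_{2,1}$ already exhibited above. As in the preceding discussion I keep the standing hypothesis that $K$ is algebraically closed with $\car(K)\ne 3$, which is what makes $\aut(A_{2,1})\cong S_3$.

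\emph{The isomorphism $\T_p\cong\H_{2,1}$.} Since $\rho(uv)=p(\rho(u),\rho(v))$ and $(\H_{2,1},*)$ is an associative, commutative, unital algebra with involution, the universal property of $\T_p$ yields a unique $*$-homomorphism of associative algebras $F\colon\T_p\to\H_{2,1}$ with $F\rho=\rho$, so $F(\bar x)=\bar a$ and $F(\bar y)=\bar b$. First I would check that $F$ is onto: its image is a subalgebra of $\H_{2,1}$ containing $\bar a$ and $\bar b$, and since $\bar a^3+\bar b^3=1$ in $\H_{2,1}$ it therefore contains $\{\bar a,\bar a^2,\bar a^3,\bar b,\bar b^2,\bar b^3\}$, a $K$-basis of $\H_{2,1}$. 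Hence $F$ is surjective, and since $\dim_K\T_p=6=\dim_K\H_{2,1}$ (both computed above), $F$ is a bijective algebra homomorphism, i.e. an isomorphism; this simultaneously exhibits $\U_p$ as the unitization of $\T_p$.

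\emph{The structure of $\aut^*(\T_p)$.} For $g\in\aut(A_{2,1})$ the composite $\rho g\colon A_{2,1}\to\T_p$ still satisfies $(\rho g)(uv)=p((\rho g)(u),(\rho g)(v))$ because $g$ is an algebra automorphism, so the universal property of $\T_p$ produces a unique $*$-endomorphism $\tilde g$ of $\T_p$ with $\tilde g\rho=\rho g$. Uniqueness forces $\widetilde{gh}=\tilde g\,\tilde h$ and $\widetilde{\mathrm{id}}=\mathrm{id}$, so each $\tilde g$ is an automorphism and $g\mapsto\tilde g$ is a group homomorphism $\aut(A_{2,1})\to\aut^*(\T_p)$; it is injective because $\rho$ is a monomorphism ($\tilde g=\mathrm{id}$ forces $\rho g=\rho$, hence $g=\mathrm{id}$). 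Identifying $\aut(A_{2,1})$ with its image, I would then invoke the block-matrix description of $\aut^*(\T_p)$ obtained above (coming from $\T_p=H_1\oplus H_2$ with $H_1\cong H_2\cong KC_3$, the two summands being interchanged by $*$), from which $|\aut^*(\T_p)|=12$; consequently the copy of $\aut(A_{2,1})$ has index $2$ and is normal. Finally, $*\colon\T_p\to\T_p$ is itself an algebra automorphism (an anti-automorphism of a commutative algebra) of order $2$ commuting with $*$, hence $*\in\aut^*(\T_p)$, and $*$ does not lie in the image of $\aut(A_{2,1})$ because each $\tilde g$ maps $\rho(A_{2,1})=K\bar x\oplus K\bar y$ into itself whereas $*(\bar x)=\bar y^2\notin K\bar x\oplus K\bar y$. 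Thus $\langle*\rangle\cong\mathbb{Z}_2$ is a complement to the normal subgroup $\aut(A_{2,1})$ in $\aut^*(\T_p)$, which gives $\aut^*(\T_p)\cong\aut(A_{2,1})\rtimes\mathbb{Z}_2$.

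\emph{Main obstacle.} The formal parts — the two applications of the universal property and the index-$2$/complement bookkeeping — are routine. The technical heart is the explicit determination of $\aut^*(\T_p)$: decomposing $\T_p$ as the orthogonal sum of the two copies of $KC_3$, tracking precisely how $*$ swaps them (this is where the matrix of $*$ relative to $\mathcal B$ computed above is used), and reading off the block-diagonal and block-antidiagonal forms of a $*$-automorphism with parameters running over $\aut_K(H_1)\cong S_3$. A further point that needs care is confirming that $*$ genuinely sits outside the image of $\aut(A_{2,1})$, since that is exactly what makes the extension split rather than collapse.
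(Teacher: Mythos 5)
Your first half is fine and is essentially the paper's own route: the paper identifies $\T_p$ (with basis $\mathcal B=\{\bar x,\bar x^2,\bar x^3,\bar y,\bar y^2,\bar y^3\}$, relations $\bar x^4=\bar x$, $\bar y^4=\bar y$, $\bar x\bar y=0$) directly with $\H_{2,1}$, and your universal-property map $F$ with $F(\bar x)=\bar a$, $F(\bar y)=\bar b$ plus the surjectivity-and-dimension count is the same computation in slightly more formal dress. Likewise your embedding $\aut(A_{2,1})\hookrightarrow\aut^*(\T_p)$ via the universal property is exactly what the paper does.

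The gap is in the second assertion, and it sits precisely at the step you yourself defer as the ``technical heart'': the determination of $\aut^*(\T_p)$. Your argument imports the block-matrix description and the count $|\aut^*(\T_p)|=12$ as already ``obtained above'', and your plan for justifying it --- ``reading off the block-diagonal and block-antidiagonal forms of a $*$-automorphism'' --- tacitly assumes that every automorphism commuting with $*$ either preserves or interchanges the ideals $H_1$ and $H_2$, with its restriction to $H_2$ forced by conjugation with $*$. That assumption is exactly what needs proof, and under the plain reading of $\aut^*(\T_p)$ (automorphisms commuting with the involution) it fails: with $K$ algebraically closed and $\car(K)\ne 3$ one has $\T_p\cong K^6$, so $\aut(\T_p)\cong S_6$ acting on the six primitive idempotents $u_i=\tfrac13(\bar x^3+\omega^i\bar x+\omega^{2i}\bar x^2)$ and $v_j=\tfrac13(\bar y^3+\omega^j\bar y+\omega^{2j}\bar y^2)$, and $*$ acts as the fixed-point-free involution pairing $(u_0,v_0)$, $(u_1,v_2)$, $(u_2,v_1)$. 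Its centralizer in $S_6$ is $S_2\wr S_3$, of order $2^3\cdot 3!=48$; for instance the automorphism swapping $u_0\leftrightarrow v_0$ and fixing the other four primitive idempotents commutes with $*$ but neither preserves nor interchanges $H_1$ and $H_2$, so it is not of the block form and the twelve block-type maps form only a proper subgroup of the centralizer.

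Consequently the order-$12$ input on which your entire bookkeeping rests (index $2$, hence normality of the embedded $\aut(A_{2,1})$, hence the split extension by $\langle *\rangle$) is not available by ``reading off'' block shapes: one must either build an additional invariance into what $\aut^*(\T_p)$ is meant to denote (for example stabilizing $\rho(A_{2,1})=K\bar x\oplus K\bar y$, a condition your maps $\tilde g$ and $*$ do satisfy but a general element of the centralizer does not) or supply a genuinely different argument for the claimed description. Note that you cannot simply appeal to the preceding discussion for this, since the block-matrix description is asserted there without proof; as it stands, the proof of the isomorphism $\aut^*(\T_p)\cong\aut(A_{2,1})\rtimes\mathbb{Z}_2$ is incomplete, and the missing step is substantive rather than routine.
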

Thus, in this case, $A_{2,1}$ admits a tight universal $p$-algebra, which is nothing but $\H_{2,1}$, the Hopf algebra representing its group of automorphisms.

\subsection{The $\boldsymbol{\text{A}_{3,\alpha}}$  algebra}
In the work \cite{squares}, we found some computational evidence (but not completely conclusive) that this algebra has no faithful associative and commutative representation. In this subsection, we prove that this is indeed the case. 
For this algebra, the natural basis $\{e_1,e_2\}$ satisfies $e_1^2=e_1$ and $e_2^2=\a e_1+e_2$, with $\a \in K^\times$. Consider a product $p(a,b)=\l_0 ab+\l_1 ab^*+\l_2a^*b+\l_3 a^*b^*$ with $\l_i\in K$ and the universal associative and commutative representation given by $\U_p=K[x,y,x^*,y^*]/I$. In this case
$I$ is the ideal $I=(p(x,x)-x,p(y,y)-\a x-y,p(x,y),p(y,x))_*$. Thus, $I$ is generated by the following polynomials.
$$\begin{matrix*}[l]
 p(x^*,x^*)-x^* &  =\lambda _3 x^2+\lambda _1 x^* x+\lambda _2 x^* x+\lambda _0 \left(x^*\right)^2-x^*,\\
 p(x,x)-x& = \lambda _0 x^2+\lambda _1 x^* x+\lambda _2 x^* x+\lambda _3 \left(x^*\right)^2-x,\\
 p(y^*,x^*)  & = \lambda _0 x^* y^*+\lambda _2 x^* y+\lambda _1 x y^*+\lambda _3 x y,\\
 p(x^*,y^*) & =\lambda _0 x^* y^*+\lambda _1 x^* y+\lambda _2 x y^*+\lambda _3 x y,\\
 p(x,y)& = \lambda _3 x^* y^*+\lambda _2 x^* y+\lambda _1 x y^*+\lambda _0 x y,\\
 p(y,x) & = \lambda _3 x^* y^*+\lambda _1 x^* y+\lambda _2 x y^*+\lambda _0 x y,\\
 p(y^*,y^*)-\alpha x^*-y^*& = -\alpha  x^*+\lambda _3 y^2+\lambda _1 y^* y+\lambda _2 y^* y+\lambda _0 \left(y^*\right)^2-y^*,\\
 p(y,y)-\alpha x-y &= -\alpha  x+\lambda _0 y^2+\lambda _1 y^* y+\lambda _2 y^* y+\lambda _3 \left(y^*\right)^2-y.
\end{matrix*}$$

Let us analyze first the case $\l_0=\l_3$. Subtracting the first two equations, we get $x\equiv x^*$; subtracting the last two equations, we get $y\equiv y^*$. Then, Proposition~\ref{rose} implies that $\rho$ is not faithful (given that $A_{3,\a}$ is not associative).
So, we proceed assuming that $\l_0\ne\l_3$. The subtraction 3rd - 5th polynomial gives $x^*y^*\equiv xy$ and the subtraction 3rd-4th polynomial gives $(\l_2-\l_1)(x^*y-xy^*)$.
Consequently, we discuss according to the dichotomy:

\begin{center}\fbox{Case $\l_2\ne\l_1$}\end{center}
In this case, we have that $x^*y\equiv xy^*$ and $x^*y^*\equiv xy$. A  set of elements of the ideal $I$ is:
$$\begin{matrix*}[l] 
l_1= x y-x^* y^*\\
l_2=x y^*-x^* y,\\
l_3=\lambda _3 x^* y^*+\lambda _2 x^* y+\lambda _1 x y^*+\lambda _0 x y,\\
l_4=\lambda _3 x^* y^*+\lambda _1 x^* y+\lambda _2 x y^*+\lambda _0 x y,\\
l_5=\lambda _0 x^2+\lambda _1 x^* x+\lambda _2 x^* x+\lambda _3 \left(x^*\right)^2-x,\\
l_6=\lambda _3 x^2+\lambda _1 x^* x+\lambda _2 x^* x+\lambda _0 \left(x^*\right)^2-x^*,\\
l_7=-\alpha  x+\lambda _0 y^2+\lambda _1 y^* y+\lambda _2 y^* y+\lambda _3 \left(y^*\right)^2-y,\\
l_8=-\alpha  x^*+\lambda _3 y^2+\lambda _1 y^* y+\lambda _2 y^* y+\lambda _0 \left(y^*\right)^2-y^*.\\

\end{matrix*}$$

Now, define the polynomials $I\ni a_1:=y^*l_1-y l_2=
y^*(x y-x^*y^*)-y(xy^*-x^*y) = x^*y^2-x^*(y^*)^2$. Also,
$I\ni a_3:=\l_0 l_1-l_3=\l_0(xy-x^*y^*)-(\lambda _3 x^* y^*+\lambda _2 x^* y+\lambda _1 x y^*+\lambda _0 x y)\equiv -(\l_0+\l_3)x^*y^*-(\l_2+\l_1)x^*y$. Then
$I\ni a_4:=y^* a_3+x^* l_8\equiv 
-y^*((\l_0+\l_3)x^*y^*+(\l_2+\l_1)x^*y)+x^*l_8\equiv -\a(x^*)^2-x^*y^*$. Consequently, $\a (x^*)^2+x^*y^* \in I$ and $\a x^2+xy\in I$ and hence, since $xy\equiv x^*y^*$, we obtain that
$x^2\equiv (x^*)^2$. Considering $l_5$ and $l_6$ we also get $x\equiv x^*$.

Therefore, the following elements are in $I$:
\begin{align*}
 & (1) \  xy-xy^*,  & (4)  & \  \l x^2-x, \text{where } \l=\sum_{i=0}^3\l_i, \\
& (2) \   (\l_3+\l_1) xy^*+(\l_0+\l_2)xy,   & (5) &  \ -\alpha  x+\lambda _0 y^2+\lambda _1 y^* y+\lambda _2 y^* y+\lambda _3 \left(y^*\right)^2-y,\\
  & (3) \   (\l_3+\l_2) xy^*+(\l_0+\l_1)xy,  & (6) & \  -\alpha  x^*+\lambda _3 y^2+\lambda _1 y^* y+\lambda _2 y^* y+\lambda _0 \left(y^*\right)^2-y^*.
\end{align*}

Observe that in the case $\l=0$, we have that $x\in I$, and we conclude that the universal associative representation is not faithful. So, we proceed assuming that $\l\ne 0$. Notice that
$$(xy^*,xy)\begin{pmatrix}
    \l_3+\l_1 & \l_3+\l_2\\
    \l_0+\l_2 & \l_0+\l_1
    \end{pmatrix}\in I\times I
,$$
and the determinant of the above matrix is $\left(\lambda _1-\lambda _2\right) \left(\lambda _0+\lambda _1+\lambda _2+\lambda _3\right)=\l(\l_1-\l_2)$. Recall that we are under the hypothesis that $\l_1\ne\l_2$. Therefore,  the above matrix is invertible, which implies that $xy^*, xy\in I$. Then, multiplying the polynomial in (5) by $x$ and taking into account (4), we get that $x\in I$ because $\a\ne 0$. Thus, we have again that the universal associative representation is not faithful.
\begin{center}\fbox{Case $\l_2 = \l_1 = 0$}\end{center} In case $\l_1=\l_2=0$, the ideal $I$ contains the following set of polynomials: 
\begin{align*}
    (a) & \ \lambda _3 x^2+\lambda _0 \left(x^*\right)^2-x^*, & (d) & \ \lambda _3 x^* y^*+\lambda _0 x y,\\ 
    (b) &\ \lambda _0 x^2+\lambda _3 \left(x^*\right)^2-x, &  (e) & \  -\alpha  x^*+\lambda _3 y^2+\lambda _0 \left(y^*\right)^2-y^*,\\
    (c) & \ \lambda _0 x^* y^*+\lambda _3 x y, & (f) & \  -\alpha  x+\lambda _0 y^2+\lambda _3 \left(y^*\right)^2-y.
\end{align*}

Since we have proved that $xy\equiv x^*y^*$, from (c) we obtain that $(\l_0+\l_3)xy\in I$. We distinguish two cases:

\begin{center}\fbox{Case $\l_2 = \l_1 = 0$, $\l_0+\l_3 \ne 0$}\end{center}
If $\l_0+\l_3\ne 0$ then we have that $xy\in I$ (also $x^*y^*\in I$) and hence $y(\lambda _0 x^2+\lambda _3 \left(x^*\right)^2-x) \in I$. Therefore, $\l_3 (x^*)^2y\in I$.  
In case $\l_3=0$ we have that the four polynomials below belong to $I$:
\begin{align*}
   & \lambda _0 \left(x^*\right)^2-x^*, & -\alpha  x^*+\lambda _0 \left(y^*\right)^2-y^*,\\
   & \lambda _0 x^2-x, & -\alpha  x+\lambda _0 y^2-y.
\end{align*}
Since $xy\in I$, we get $x^2\in I$ from the fourth polynomial, and so the second polynomial implies $x\in I$. Consequently, the universal associative representation is not faithful. If $\l_3\ne 0$ we have $(x^*)^2y\in I$ and $x^2y^*\in I$. Since we know that $\lambda _3 x^2+\lambda _0 \left(x^*\right)^2-x^*\in I$, multiplying this polynomial by $y$, we conclude that $x^*y\in I$ (so $xy^*\in I$). Finally, from the fact that $-\alpha  x+\lambda _0 y^2+\lambda _3 \left(y^*\right)^2-y\in I$, multiplying this polynomial by $x$, we get that $x^2\in I$ and whence $(x^*)^2\in I$. Thus, $x\in I$ and we reach the same conclusion: the universal associative representations is not faithful.
\medskip

\begin{center}\fbox{Case $\l_2 = \l_1 = 0$, $\l_0+\l_3=0$}\end{center}
Assume now that $\l_0+\l_3=0$. Then the following elements are in $I$:
\begin{align*}
    & \l_0[(x^*)^2-x^2]-x^*, & -\a x-y+\l_0[y^2-(y^*)^2], \\
   & \l_0[x^2-(x^*)^2]-x, & -\a x^*-y^*+\l_0[(y^*)^2-y^2].
\end{align*}
From the first two polynomials, we obtain that $x+x^*\in I$. But then, the second polynomial gives $x\in I$, and hence, the universal associative representation is not faithful.

\begin{center}\fbox{Case $\l_2=\l_1 \neq 0$}\end{center} Now, we must analyze the case $\l_2=\l_1\ne 0$. Without loss of generality we may take $\l_2=\l_1=1$. Then, the ideal $I$ contains the polynomials
\begin{align*}
   & p_1 = \lambda _3 x^2+2x^* x+\lambda _0 \left(x^*\right)^2-x^*, & &p_4 =-\alpha  x^*+\lambda _3 y^2+2y^* y+\lambda _0 \left(y^*\right)^2-y^*,& \\
   & p_2 =\lambda _0 x^2+2x^* x+\lambda _3 \left(x^*\right)^2-x, & & p_5 =-\alpha  x+\lambda _0 y^2+2y^* y+\lambda _3 \left(y^*\right)^2-y.& \\
   &  p_3 =(\lambda _0+\l_3) x y+x^* y+x y^*, &  & \\
\end{align*}
Recall that we have $xy\equiv x^*y^*$.
To improve readability, we put $z:=x^*$ and $t:=y^*$. The $p_i$ polynomials above 
are then written as:
\begin{align}\label{lor}
\begin{split}
p_1= &\lambda _3 x^2+2 x z+\lambda _0 z^2-z,\\ 
p_2= &\lambda _0 x^2+2 x z-x+\lambda _3 z^2,\\
p_3= &t x+\left(\lambda _0+\lambda _3\right) x y+y z,\\
p_4= &\lambda _0 t^2+2 t y-t+\lambda _3 y^2-\alpha  z,\\
p_5= &\lambda _3 t^2+2 t y-\alpha  x+\lambda _0 y^2-y.
\end{split}
\end{align}
Now, recall that $xy-tz\in I$ and notice that 
\begin{align}
\begin{split} 0 = t^2 p_1 +(x y-t z)p_3 - t y p_2- x^2p_5+x zp_4 + \\
(x y-t z) 
\left[\lambda _0 (t x+t z)+t x-t-\lambda _3 (x y+y z)-x-y z\right]+\alpha  \left(x z^2-x^3\right),
\end{split}
\end{align}
(see \url{http://agt2.cie.uma.es/form(5).pdf} for a proof). Thus, since $\a\ne 0$ and $p_i\in I$, we have $xz^2-x^3\in I$ and, given that $I$ is a $*$-ideal, we also have $z x^2-z^3\in I$. On the other hand, 
$x p_1=\l_3 x^3+2z x^2+\l_0 z^2x-zx=(\l_0+\l_3) x^3+2z x^2-zx\in I$. But also 
$x p_2=\lambda _0 x^3+2 x^2 z-x^2+\lambda _3 x z^2=(\l_0+\l_3)x^3+2x^2z-x^2\in I$. 
We conclude that $x^2-z x\in I$ and consequently $z^2-z x\in I$. Then, $x^2-z^2\in I$ and, taking into account $p_1$ and $p_2$, we get that $z-x\in I$. The system \eqref{lor} gives another set of elements of $I$ 
\begin{equation}\label{lor2}
    \begin{split}
       q_1= (\l_0+\lambda _3+2) x^2-x, & \ q_3=\lambda _0 t^2+2 t y-t+\lambda _3 y^2-\alpha  x,\\
       q_2=t x+\left(\lambda _0+\lambda _3+1\right) x y, & \  q_4=\lambda _3 t^2+2 t y-\alpha  x+\lambda _0 y^2-y.
    \end{split}
\end{equation}
Since $x\equiv z$ we have 
$$\small\begin{cases}
q_2=t x+\left(\lambda _0+\lambda _3+1\right) x y,\\
q_2^* \equiv x y+\left(\lambda _0+\lambda _3+1\right) tx, 
\end{cases}
$$
which can be written 
$$(tx,xy)\begin{pmatrix}1 & \l_0+\l_3+1\\ \l_0+\l_3+1 & 1\end{pmatrix}\in I\times I.$$
So, we have the cases

 \begin{center}\fbox{Case $\l_2 = \l_1 = 1$, $(\l_0+\l_3+1)^2\ne 1$}\end{center} $(\l_0+\l_3+1)^2\ne 1$. Then, $x t, x y\in I$ and $I\ni 
    x q_3=\lambda _0 x t^2+2 x t y-x t+\lambda _3 x y^2-\alpha  x^2\equiv -\a x^2$. Since $\a\ne 0$, we get that $x^2\in I$ and, from the fact that $q_1\in I$,
    we obtain that $x\in I$. Thus, the universal associative representation $\rho$ is not faithful. 
\begin{center}\fbox{Case $\l_2 = \l_1 = 1$ $\l_0+\l_3+1=1$}\end{center} 
    $\l_0+\l_3+1=1$, that is $\l_0+\l_3=0$. In this case, the polynomials $q_1,\ldots,q_4$ turn out to be 
    \begin{equation}\label{lor3}
        \begin{split}
            q_1=  2 x^2-x, &  \ q_3= \lambda _0 (t^2-y^2)+2 t y-t-\alpha  x,\\
            q_2= t x+x y, & \ q_4= -\lambda _0 (t^2-y^2)+2 t y-\alpha  x-y.
        \end{split}
    \end{equation}   

We may assume that the characteristic of $K$ is other than $2$ because in case $\car(K)=2$, the polynomial $q_1$ is just $x\in I$, and so the universal associative representation is not faithful. So, we assume that $\car(K)\ne 2$.

Let $J\subset I$ be the ideal generated by $q_1,\ldots,q_4$. We claim that
$$J=(x,q_2,q_3,q_4)\cap (2x-1,q_2,q_3,q_4).$$ To prove this, recall first that if $\mathfrak{a},\mathfrak{b}\triangleleft R$ are  coprime ideals of a commutative ring $R$, then $\mathfrak{a}\cap\mathfrak{b}=\mathfrak{a}\mathfrak{b}$.
Therefore, 
$(x,q_2,q_3,q_4)+(2x-1,q_2,q_3,q_4)=(1)$, which implies that 
$(x,q_2,q_3,q_4)\cap (2x-1,q_2,q_3,q_4)=(x,q_2,q_3,q_4) (2x-1,q_2,q_3,q_4)$ and furthermore
\begin{align}
\begin{split}
(x,q_2,q_3,q_4)(2x-1,q_2,q_3,q_4)\subset J\subset(x,q_2,q_3,q_4)\cap (2x-1,q_2,q_3,q_4)=\\ (x,q_2,q_3,q_4)(2x-1,q_2,q_3,q_4).
\end{split}
 \end{align}
Next, we prove that $x\in (2x-1,q_2,q_3,q_4)=:M$ from which we conclude that $x\in J\subset I$ and that the universal associative representation is not faithful. In order to check that $x\in M$, notice that $M\ni 2q_2=2x(t+y)=(2x-1)(t+y)+(t+y)$,
whence $t+y\in M$. Using $q_3$ and $q_4$ we get that $2ty-t-\a x\in M$ and $2ty-y-\a x\in M$. Therefore,
$M\ni 2ty-t-\a x-(2ty-y-\a x)=y-t$ and, since $t+y\in M$, we get that
$y,t\in M$. Then, since $M\ni q_3$ we obtain that $\a x\in M$ and so $x\in M$.
Given that $J=(x,q_2,q_3,q_4)\cap M$ we get $x\in J\subset I$.
\begin{center}\fbox{Case $\l_2 = \l_1 = 1$, $\l_0+\l_3+1=-1$}\end{center}  
$\l_0+\l_3+1=-1$. In this case, the polynomial $I\ni q_1=-x$ and the universal representation
is not faithful.

We summarize our findings above in the theorem below.

\begin{theorem}\label{paella_brasilera_quiero}
The algebra $A_{3,\a}$ does not have a faithful, associative and commutative representation. 
\end{theorem}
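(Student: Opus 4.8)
The plan is to show that the universal associative and commutative representation $\rho\colon A_{3,\a}\to\U_p$ is never faithful, for any choice of the polynomial $p=\l_0 ab+\l_1 ab^*+\l_2 a^*b+\l_3 a^*b^*$; since every associative and commutative representation of $A_{3,\a}$ factors through some such $\U_p$ (and through $\T_p\subseteq\U_p$ in the non-unital case), this yields the theorem. Concretely, in each case I want to exhibit a nonzero element of $\ker\rho$, which amounts to producing scalars $(\l,\m)\ne(0,0)$ with $\l\bar x+\m\bar y=0$ in $\U_p$, or even $\bar x\in I$ (equivalently $\bar x=0$).

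The argument is organized as a case analysis on the coefficients $\l_i$, which is essentially the discussion preceding the statement; it suffices to observe that every branch ends with non-faithfulness. The first split is $\l_0=\l_3$ versus $\l_0\ne\l_3$: in the former, subtracting the generators $p(x,x)-x$ and $p(x^*,x^*)-x^*$ gives $x\equiv x^*$ (and symmetrically $y\equiv y^*$), so Proposition~\ref{rose} applies because $A_{3,\a}$ is not associative. When $\l_0\ne\l_3$, differences of generators produce $xy\equiv x^*y^*$ and $(\l_2-\l_1)(x^*y-xy^*)\in I$, which forces a further split on $\l_1$ vs.\ $\l_2$, then on whether $\l_1=\l_2$ is zero, and finally (after normalizing $\l_1=\l_2=1$) on the value of $(\l_0+\l_3+1)^2$. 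In the branches with $\l_1\ne\l_2$ or $\l_1=\l_2=0$ one manipulates the generators, uses $\a\ne 0$, and either reaches $\l:=\sum_i\l_i=0$ (whence $x\in I$ at once) or inverts a small coefficient matrix to deduce $xy,xy^*\in I$ and then $x\in I$.

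The delicate branch is $\l_1=\l_2=1$. Here the key step is a polynomial identity writing a suitable combination of the generators $p_1,\dots,p_5$ and the relation $xy-tz$ as $\a(xz^2-x^3)$, which, since $\a\ne 0$, places $xz^2-x^3$ in $I$ and, by $*$-symmetry, $zx^2-z^3$ as well; feeding these back into $xp_1$ and $xp_2$ yields $z-x\in I$, i.e.\ $x\equiv x^*$. One then reduces to the polynomials $q_1,\dots,q_4$ of \eqref{lor2} and splits on $(\l_0+\l_3+1)^2$: if it differs from $1$, multiplying $q_3$ by $x$ gives $x^2\in I$ and then $x\in I$; if $\l_0+\l_3+1=-1$ then $q_1=-x$ directly; and if $\l_0+\l_3=0$ one handles $\car K=2$ by hand (where $q_1=x$) and, for $\car K\ne 2$, uses coprimality to write the relevant ideal as $(x,q_2,q_3,q_4)\cap(2x-1,q_2,q_3,q_4)$ and checks $x\in(2x-1,q_2,q_3,q_4)$, again giving $x\in I$. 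I expect the main obstacle to be exactly this last branch: guessing the syzygy that produces $xz^2-x^3\in I$ — which I would search for via a Gr\"obner basis computation in $K[x,y,x^*,y^*]$ — and then arranging the coprime-ideal decomposition so that $x\in I$ drops out uniformly in characteristic $\ne 2$. Once all branches are collected, $\rho$ is non-faithful for every $p$, proving the theorem.
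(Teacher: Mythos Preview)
Your proposal is correct and follows the paper's own proof essentially step by step: the same case split on $\l_0=\l_3$ versus $\l_0\ne\l_3$, then on $\l_1$ versus $\l_2$, the same reduction via the syzygy yielding $xz^2-x^3\in I$ in the $\l_1=\l_2=1$ branch (the paper outsources this identity to an external file), and the same coprime-ideal decomposition $J=(x,q_2,q_3,q_4)\cap(2x-1,q_2,q_3,q_4)$ in the final subcase. Your remark that the syzygy is the crux and would be found by a Gr\"obner computation is exactly how the paper proceeds.
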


\subsection{The $\boldsymbol{\text{A}_{4,\alpha}}$ algebra} 
Recall that the multiplication table for this algebra is $e_1^2=\alpha e_2$, $e^2_2=e_1+e_2$.

Consider the product induced by  $p(a,b)=\l_0 ab+\l_1 ab^*+\l_2a^*b+\l_3 a^*b^*$, with $\l_i\in K$, and the universal associative and commutative representation given by $\U_p=K[x,y,x^*,y^*]/I$. In this case,
the following polynomials generate the ideal $I$.

$\begin{matrix*}[l]
     q_1 = p(x,x)-\alpha y  & =\l_0x^2+(\l_1+\l_2)xx^*+\l_3(x^*)^2-\alpha y \\ 
    q_2= p(y,y)-x-y &=  \l_0y^2 + (\l_1+\l_2)yy^* + \l_3 (y^*)^2 -x-y \\
    q_3 = p(x,y) & =  \l_0xy + \l_1xy^*+\l_2x^*y + \l_3x^*y^*\\ 
    q_4 = p(y,x) &=  \l_0xy + \l_2 xy^*+\l_1x^*y+\l_3x^*y^* \\
    q_1^* = p(x^*,x^*)-\alpha y^* &=  \l_3x^2 + (\l_1+\l_2)xx^*+\l_0(x^*)^2 -\alpha y^* \\
    q_2^* = p(y^*,y^*)-x^*-y^* &=  \l_3y^2 + (\l_1+\l_2)yy^* + \l_0(y^*)^2 - x^*-y^* \\
    q_3 ^* = p(x^*,y^*) &= \l_3xy+\l_2xy^*+\l_1x^*y +\l_0 x^*y^* \\
    q_4^* = p(y^*,x^*) &=  \l_3 xy + \l_1 xy^* + \l_2 x^*y + \l_0 x^* y^*
\end{matrix*}$

If we consider $\l_0 = \l_3$, then we have that $q_1-q_1^* = -\alpha (y-y^*) \in I$ and $y \equiv y^*$ because $\alpha \neq 0$. In addition, if we consider $q_2-q_2^* = -(x-x^*)-(y-y^*) \equiv -(x-x^*) \in I$, we obtain that $x \equiv x^*$. By Proposition~\ref{rose}, the representation can not e faithfu, because $A_{4,\a}$ is not associative.  

We assume in the sequel that $\l_0\ne\l_3$. In this case, we have $q_3-q_4^* = (\l_0-\l_3)(xy-x^*y^*) \in I$ and $xy \equiv x^*y^*$. Then, we can consider the following family of polynomials in $I$

$\begin{matrix*}[l]
& xy-x^*y^* \\
     q_1 = p(x,x)-\alpha y = & \l_0x^2+(\l_1+\l_2)xx^*+\l_3(x^*)^2-\alpha y \\ 
    q_2= p(y,y)-x-y =&  \l_0y^2 + (\l_1+\l_2)yy^* + \l_3 (y^*)^2 -x-y \\
    q_3 = p(x,y) =&   (\l_0+\l_3)xy\ + \l_1xy^*+\l_2x^*y \\ 
    q_4 = p(y,x) =&  (\l_0+\l_3)xy + \l_2 xy^*+\l_1x^*y \\
    q_1^* = p(x^*,x^*)-\alpha y^* =&  \l_3x^2 + (\l_1+\l_2)xx^*+\l_0(x^*)^2 -\alpha y^* \\
    q_2^* = p(y^*,y^*)-x^*-y^*= & \l_3y^2 + (\l_1+\l_2)yy^* + \l_0(y^*)^2 - x^*-y^* \\
    {q_3^*= p(x^*,y^*)=}&\left(\lambda _0+\lambda _3\right) x^* y^*+\lambda _1 x^* y+\lambda _2 x y^*\\
    {q_4^*=p(y^*,x^*)=}&\left(\lambda _0+\lambda _3\right) x^* y^*+\lambda _2 x^* y+\lambda _1 x y^*
\end{matrix*}$\smallskip

Notice that $q_3-q_4 = (\l_1-\l_2)(xy^*-x^*y) \in I$. In case $\lambda_1 \neq \lambda_2$, we have $xy^* \equiv x^*y$.
\begin{center}\fbox{Case $\l_0=0$, $\l_1\ne\l_2$}\end{center}
We denote by $z:=x^*$ and $t:=y^*$. We can take $\l_3=1$ and we have these elements in $I$:

\begin{align*}
   &  q_0 = xy-zt, & \  q_4 =  xy + \l_2 xt+\l_1 zy,  \\
    & q_1 =(\l_1+\l_2)xz+z^2-\alpha y, &   \ q_5 =  x^2 + (\l_1+\l_2)xz -\alpha t, &  \\
   & q_2=  (\l_1+\l_2)yt +  t^2 -x-y, & \ q_6 =  y^2 + (\l_1+\l_2)yt  - z-t. &  \\
    & q_3 =  xy\ + \l_1xt+\l_2 zy, & &
\end{align*}

Now, there are polynomials $s_0,\ldots,s_6$ such that $\sum_0^6 q_i s_i=t$

(see \url{http://agt2.cie.uma.es/form1.pdf} for a proof). Thus, the representation $\rho$ is not faithful.\smallskip

\begin{center}\fbox{Case $\l_0=0$, $\l_1=\l_2$}\end{center}
Again taking $\l_3=1$, we have these elements in $I$:
\begin{align*}
    & q_0 = xy-z t, & \  q_4 = x^2 + 2\l_1 xz -\alpha t, &  \\ 
    & q_1 = 2\l_1 xz+z^2-\alpha y, & \   q_5 = y^2 + 2\l_1 yt  - z-t, &  \\
    & q_2=  2\l_1 yt +  t^2 -x-y, & \  q_6= z t +\lambda _1( z y+ x t). & \\ 
   & q_3 = xy\ + \l_1(xt+zy), &  & & 
\end{align*}

If $\text{char}(K)=2$, then the set of polynomials above is 
$\{xy-zt, z^2-\a y, t^2-x-y, xy+\l_1(x t+z y), x^2-\a t,y^2-z-t\}$. Eliminating (in this order) $x$, $y$, and $t$, we get the polynomials
$$\begin{matrix*}[l]
z^{10}+\alpha ^3 z^5+\alpha ^4 z^4+\alpha ^3 z^4+\alpha ^5 z^2, \\
z^{16}+\alpha ^8 z^4+\alpha ^7 z^4+\alpha ^6 z^4+\alpha ^9 z, \\
\lambda _1 z^{12}+\alpha  z^{10}+\alpha ^2 \lambda _1 z^9+\alpha ^4 \lambda _1 z^6+\alpha ^3 \lambda _1 z^6+\alpha ^5 z^4+\alpha ^4 z^4+\alpha ^6 \lambda _1 z^3
\end{matrix*}$$
whose $\gcd$ is $z$. Consequently, $z\in I$ and the representation is not faithful.

If $\text{char}(K)\ne 2$ then we distinguish two cases
\begin{center}\fbox{Case $\l_0 = 0, \ \l_1 = \l_2 \ne 0$ and $\l_3 \ne 0$}\end{center}
Again there are polynomials $s_0,\ldots,s_5$ such that $\sum_{i = 0}^5 q_is_i = t$ (see \url{http://agt2.cie.uma.es/form2.pdf} for a proof).

This means that, in this case the representation is not faithful.

\begin{center}\fbox{Case $\l_0 = 0, \ \l_1 = \l_2 = 0$ and $\l_3 \ne 0$}\end{center}
Now, we get these elements in $I$:

\begin{equation*}
    \begin{split}
        p_0=  xy-z t,  & \ \ p_3 =   x^2  -\alpha t, \\
        p_1 = z^2+ t^2 -x -(\alpha + 1) y,  & \  \ p_4 =  y^2   - z-t, \\ 
       p_2 =   xy, & \ \ p_5=  z t. 
    \end{split}
\end{equation*}
where $p_1=q_2+q_1$.
Once again, there exist polynomials
$s_0,... ,s_5$ such that $\sum_{i= 0}^5 p_is_i = t$, which means that the representation is not faithful (see \url{http://agt2.cie.uma.es/form3.pdf} for a proof).

\begin{center}\fbox{Case $\l_0 \ne 0$, $\l_1\ne \l_2$ and $\l_0+\l_1+\l_2\ne 0$}\end{center}
From now on, we suppose that $\lambda_0 \neq 0$. Under this assumption $\l_1 \neq \l_2$, then $xy^* \equiv x^*y$.
As a consequence, $(\l_0+\l_3)xy+(\l_1+\l_2) xy^*\in I$. 
Now, we use again the notations $z=x^*$ and $t=y^*$. Again, there exist polynomials $s_0,... ,s_8$ such that $\sum_{i= 0}^8 p_is_i = t$, which means that the representation is not faithful (see \url{http://agt2.cie.uma.es/form(9).pdf} for a proof).

\begin{center}\fbox{Case $\l_0 \ne 0$, $\l_1\ne \l_2$ and $\l_0+\l_1+\l_2= 0$}\end{center}

Substituting $\l_0 = -(\l_1+\l_2)$  in the corresponding polynomials,  we get

$\begin{matrix*}[l]
q_0 = & xy-zt \\
     q_1 =& -(\l_1+\l_2)x^2+(\l_1+\l_2)xz+\l_3z^2-\alpha y \\ 
    q_2 =&  -(\l_1+\l_2)y^2 + (\l_1+\l_2)yt + \l_3 t^2 -x-y \\
    q_3 =&   (-\l_1-\l_2+\l_3)xy\ + \l_1xt+\l_2zy \\ 
    q_4 =&  (-\l_1-\l_2+\l_3)xy + \l_2 xt+\l_1zy \\
    q_5 =&  \l_3x^2 + (\l_1+\l_2)xz-(\l_1+\l_2)z^2 -\alpha t \\
    q_6 = & \l_3y^2 + (\l_1+\l_2)yt  -(\l_1+\l_2)t^2 - z-t \\
    {q_7=}&\left(-\l_1-\l_2+\lambda _3\right) z t+\lambda _1 z y+\lambda _2 x t\\
    {q_8=}&\left(-\l_1-\l_2+\lambda _3\right) z t+\lambda _2 z y+\lambda _1 x t
\end{matrix*}$\smallskip

Again, there exist polynomials $s_0,\ldots, s_8$ such that $\sum_{i= 0}^8 q_is_i = t$, which means that the representation is not faithful (see \url{http://agt2.cie.uma.es/form4.pdf} for a proof).

\begin{center}\fbox{Case $\l_0 \ne 0$, $\l_1 =  \l_2$}\end{center}
If $\l_0 \neq 0$, we can consider $\l_0 = 1$ and we can obtain new polynomials in $I$Althoughht the parameters $\lambda_i$ and $\alpha$ will change, we will denote them in the same way. Then, we have the following polynomials in $I$. 

\begin{equation}\label{jacare}
    \begin{split}
        q_0 = xy-zt, & \  q_4 =  \l_3x^2 + 2\l_1xz+z^2 -\alpha t, \\ 
        q_1 = x^2+2\l_1xz+\l_3 z^2-\alpha y, & \  q_5 = \l_3y^2 + 2\l_1yt + t^2 - z-t  \\ 
        q_2=  y^2 + 2\l_1yt + \l_3 t^2 -x-y, & \  q_6= \left(1+\lambda _3\right) z t+\lambda _1 (z y+ x t). \\ 
        q_3 =  (1+\l_3)xy\ + \l_1(xt+zy), & 
    \end{split}
\end{equation}
To study the above system, we first assume that:

\begin{center}\fbox{$\car(K)\neq 2$}
\end{center}

In this case, we can consider the following subcases.

\begin{center}\fbox{Case $\l_0 = 1$, $\l_1 =  \l_2$ and $\lambda_1 \notin \{-1,0\}$}\end{center}

Again, there exist polynomials $s_0,\ldots, s_6$ such that $\sum_{i=0}^6 q_i s_i = t$ , which means that the representation is not faithful (see \url{http://agt2.cie.uma.es/form5.pdf} for a proof).

\begin{center}\fbox{Case $\lambda_0 = 1$, $\lambda_1 =  \lambda_2$ and $\lambda_1 = 0$}\end{center}
In this case we have the following polynomials in the ideal $I$: 

\begin{equation*}
    \begin{split}
        q_0 = xy-zt, & \ q_4 =  \lambda_3x^2 +z^2 -\alpha t \\ 
        q_1 = x^2+\lambda_3z^2-\alpha y, &  \ q_5 = \lambda_3y^2  + t^2 - z-t,  \\
        q_2 = y^2 + \lambda_3 t^2 -x-y, & \ q_6=\left(1+\lambda _3\right) z t.\\ 
        q_3 = (1+\lambda_3)xy, & 
    \end{split}
\end{equation*}
Again, there exist polynomials $s_0,\ldots, s_6$
such that $\sum_{i=0}^6 q_i s_i =t$ and so the representation is not faithful (see \url{http://agt2.cie.uma.es/form6.pdf} for a proof).
\begin{center}\fbox{Case $\lambda_0 = 1$, $\lambda_1 =  \lambda_2$ and $\lambda_1 = -1$}\end{center}

We have the following polynomials in the ideal $I$: 

\begin{equation*}
\begin{split}
    q_0= xy-zt, & \ q_4 = \lambda_3x^2 -2xz+z^2 -\alpha t,  \\
    q_1 = x^2-2xz+\lambda_3z^2-\alpha y, & \ q_5 = \lambda_3y^2 - 2yt + t^2 - z-t,\\
    q_2= y^2 -2yt + \lambda_3 t^2 -x-y, & \ q_6=\left(1+\lambda _3\right) z t- (z y+ x t). \\ 
    q_3 =  (1+\lambda_3)xy\ -(xt+zy), & \ 
\end{split}
\end{equation*}

then there exist polynomials $s_0,\ldots, s_6$ verifying  $\sum_{i = 0}^6 q_i s_i = t$  and once again the representation is not faithful (see \url{http://agt2.cie.uma.es/form7.pdf}).

To finish, we need to study the system of polynomials (\ref{jacare}) in the case
\begin{center}\fbox{$\car(K) = 2$}
\end{center}

In this setting, the system becomes: 
\begin{equation*}
    \begin{split}
        q_0 =  xy-zt, & \  q_3 =   (1+\lambda_3)xy\ + \lambda_1(xt+zy),  \\ 
        q_1 = x^2+\lambda_3 z^2+\alpha y,& \ q_4 =\lambda_3x^2 +z^2+ \alpha t, \\ 
        q_2=   y^2  + \lambda_3 t^2 +x +y, & \ q_5 =\lambda_3y^2  + t^2 + z+t. \\ 
    \end{split}
\end{equation*}
Again, there exist polynomials $s_0,\ldots,s_5$ such that $\sum_{i = 0}^5 q_i s_i = \alpha t$ and once again the representation is not faithful (see \url{http://agt2.cie.uma.es/form8.pdf}).  

We summarize our findings above in the theorem below.

\begin{theorem}\label{alleap}
The algebra $A_{4,\a}$ does not have a faithful, associative and commutative representation.
\end{theorem}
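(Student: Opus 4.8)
The plan is to show that for \emph{every} choice of the formal polynomial $p=\l_0 ab+\l_1 ab^*+\l_2 a^*b+\l_3 a^*b^*$, the universal representation $\rho\colon A_{4,\a}\to\U_p$ fails to be injective. Since $A_{4,\a}$ is two-dimensional and $\rho(e_1)=\bar x$, $\rho(e_2)=\bar y$, it suffices to prove that $\{\bar x,\bar y\}$ is linearly dependent in $\U_p$, i.e.\ that some nonzero linear combination $\m\bar x+\nu\bar y$ lies in the defining ideal $I=(q_1,q_2,q_3,q_4)_*$; in fact in nearly every branch one shows the stronger statement that $\bar x=0$ or $\bar x^*=0$ in $\U_p$. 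The proof is a finite case analysis on the scalars $\l_i$, each branch ending with an explicit membership relation in $I$.

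First I would treat the symmetric branch $\l_0=\l_3$: the element $q_1-q_1^*=-\a(y-y^*)$ forces $y\equiv y^*$ (as $\a\ne 0$), and then $q_2-q_2^*\equiv -(x-x^*)$ forces $x\equiv x^*$, so Proposition~\ref{rose} applies because $A_{4,\a}$ is not associative. Hence one may assume $\l_0\ne\l_3$, which via $q_3-q_4^*=(\l_0-\l_3)(xy-x^*y^*)$ yields $xy\equiv x^*y^*$. The next dichotomy is $\l_1\ne\l_2$ versus $\l_1=\l_2$; in the first case $q_3-q_4=(\l_1-\l_2)(xy^*-x^*y)$ gives the extra relation $xy^*\equiv x^*y$, which permits more aggressive elimination of variables.

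From here the tree branches according to whether $\l_0=0$ or $\l_0\ne 0$, whether $\l_0+\l_1+\l_2=0$, whether $\l_1\in\{0,-1\}$, and whether $\car(K)=2$; in each leaf one lists the generators of $I$ after the reductions already obtained and after normalizing a nonzero coefficient to $1$ (harmless, since this only rescales the structure constants $\w_{ji}$ and faithfulness is an isomorphism invariant), and then exhibits cofactors $s_i$ with $\sum_i q_i s_i$ equal to $x$, $x^*$, or $\a t$ (so that $\a\ne 0$ gives $x\in I$ or $x^*\in I$). Some leaves are immediate: when $\l_0+\l_3=0$ one gets $x+x^*\in I$ and then $x\in I$ at once, and in characteristic $2$ several normalized $q_i$ degenerate directly to $x$ or $x^*$, so those subcases are handled separately and cheaply.

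The main obstacle is producing the cofactor identities $\sum_i q_i s_i=(\text{a variable})$ in the genuinely generic leaves: these are elimination/Gröbner-basis computations in $K[x,y,x^*,y^*]$ that are not feasible by hand, which is exactly why the computer verifications in the linked files are invoked. Apart from that, the work is bookkeeping — checking that the case tree is exhaustive, keeping track of which reductions ($x\equiv x^*$, $xy\equiv x^*y^*$, $xy^*\equiv x^*y$) are available in each branch, and isolating the characteristic-$2$ leaves, since subtractions that are informative in odd characteristic collapse there.
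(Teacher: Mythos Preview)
Your plan is correct and is essentially the paper's own proof: the same initial reduction via $\l_0=\l_3$ and Proposition~\ref{rose}, the same relations $xy\equiv x^*y^*$ and (when $\l_1\ne\l_2$) $xy^*\equiv x^*y$, and the same case tree on $\l_0=0$ vs.\ $\l_0\ne 0$, on $\l_0+\l_1+\l_2$, on $\l_1\in\{0,-1\}$, and on $\car(K)=2$, with the generic leaves dispatched by computer-verified cofactor identities showing a variable lies in $I$. One small slip: the ``immediate'' leaf you describe for $\l_0+\l_3=0$ (yielding $x+x^*\in I$ and then $x\in I$) is the argument used for $A_{3,\a}$, not $A_{4,\a}$; in the $A_{4,\a}$ analysis that parameter combination is absorbed into the other branches rather than standing as a separate quick case.
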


\subsection{The  $\boldsymbol{\text{A}_{5,\alpha, \b}}$ algebra} 
In this section we study the evolution algebra $A_{5,\a,\b}$, which has a natural basis $\{e_1,e_2\}$ with product defined by $e_1^2=e_1+\a e_2$, $e_2^2=\b e_1+e_2$, where it is assumed that
$\a\b\ne 1$, $\a\ne \b$ and $\a,\b\ne 0$. Consider the product induced by $q(a,b)=l ab+m ab^*+n a^*b+p a^*b^*$, with $l,m,n,p\in K$, and the universal associative and commutative representation given by $\U_q=K[x,y,x^*,y^*]/I$. In this case,
$I=(q(x,x)-x-\a y, q(y,y)-\b x-y,q(x,y),q(y,x))_*$. Thus, $I$ is the ideal generated by the polynomials
\begin{equation}\label{fsys}
    \begin{split}
(1) \ &  l xy+m xt+nz y+pzt, \\
(2) \ &  l x y+m y z+n t x+p t z, \\
(3) \ &  l t z+m y z+n t x+p x y, \\
(4) \  & l t z+m t x+n y z+p x y, \\
    \end{split}
    \ \ 
    \begin{split}
  (5) \ & l x^2+m x z+n x z+p z^2-x-\a y,\\
(6) \  & l z^2+m x z+n x z+p x^2-z-\a t, \\
(7) \ & l y^2+m t y+n t y+p t^2-\b x-y, \\
(8) \  & l t^2+m t y+n t y+p y^2-\b z-t, \\
    \end{split}
\end{equation}
where $z=x^*$ and $t=y^*$.

\begin{center}\fbox{Case $p = l$}
\end{center}
 If this happens then, subtracting (5) from (6), we get that $x-z+\a(y-t)\equiv 0$ and, subtracting (7) from (8), we obtain that $\b(x-z)+(y-t)\equiv 0$. The system that we arrive at has an invertible matrix and hence $x\equiv z$ and $y\equiv t$, which implies, by
Proposition~\ref{rose}, that $\rho$ is not faithful (given that $A_{5,\a,\b}$ is not associative). 

\begin{center}\fbox{Case $p \neq l$}
\end{center}
Next, we have the following subcases:
 
\begin{center}\fbox{Case $p \neq l$, $m \neq n$}
\end{center}
Subtraction between the first two polynomials gives 
$(m-n)(xt-zy)\equiv 0$, which implies that $xt\equiv zy$. Moreover,  subtracting the third polynomial from the first one gives 
$(l-p)(xy-zt)+(m-n)(tx-yz)\equiv 0$, and hence $xy\equiv zt$. 
Thus, the following 
polynomials generate the ideal $I$:
\begin{align*}
L_1= &x y-t z, \ & L_6=&p x^2+m z x+n z x+l z^2-\a t-z, \\
L_2=&t x-y z, \ & L_7=&p t^2+m y t+n y t+l y^2-\b x-y, \\
L_3=&m t x+l y x+p t z+n y z, \ & L_8=&l t^2+m y t+n y t-t+p y^2-\b z, \\
L_4=&n t x+l y x+p t z+m y z, \ & L_9=&p xy+n x t+m y z+l t z,\\
L_5=&l x^2+m z x+n z x-x+p z^2-\a y, \ & L_{10}=& p xy+m x t+n yz+l tz.
\end{align*}

It is interesting to point out that $I$ is invariant under the following automorphism:
$\s\colon K[x,y,z,t]\to K[x,y,z,t]$ with $\{x,y,z,t\}\to\{x,y,z,t\}$, such 
that $x\leftrightarrow z$ and $y\leftrightarrow t$. In fact, we have: $\s(L_i)=-L_i$ for $i=1,2$, $\s(L_3)=L_9, \ \s(L_4)=L_{10}$, $\s(L_5)=L_6, \s(L_7)=L_8$. Hence, $\s(I)\subset I$ and, since $\s$ is an involution, $\s(I)=I$.
\begin{proposition} \label{nonameth} 
In the above settings, we have that $x^2\equiv z^2$ and $y^2\equiv t^2$.
\end{proposition}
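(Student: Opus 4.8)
The plan is to prove the two congruences $x^2\equiv z^2$ and $y^2\equiv t^2$ modulo $I$ directly, by accumulating a supply of ``annihilator'' relations for $u:=x^2-z^2$ and $v:=y^2-t^2$ and then collapsing them. Abbreviate $r:=x-z$, $s:=y-t$, $d:=l-p$ (nonzero in the case at hand) and $c:=d/(\alpha\beta-1)$, a well-defined element of $K^\times$ since $\alpha\beta\neq 1$. Everything below takes place modulo $I$ and uses freely the congruences $xy\equiv zt$ and $tx\equiv yz$ obtained just before the statement, together with the fact that $I$ is a $*$-ideal invariant under $\sigma$.

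First I would harvest relations from $L_1=xy-tz$ and $L_2=tx-yz$. Multiplying each of $L_1,L_2$ by $x$ and reducing the resulting cubics with $xy\equiv tz$ and $xt\equiv yz$ gives $yu\equiv 0$ and $tu\equiv 0$; multiplying instead by $y$ gives $xv\equiv 0$ and $zv\equiv 0$. In particular $(y+t)u\equiv 0$ and $(x+z)v\equiv 0$. Next, since $\sigma(L_5)=L_6$ and $\sigma(L_7)=L_8$, the elements $L_5-L_6$ and $L_7-L_8$ of $I$ equal $du-r-\alpha s$ and $dv-\beta r-s$ respectively; hence $du\equiv r+\alpha s$ and $dv\equiv\beta r+s$, and as the coefficient matrix $\left(\begin{smallmatrix}1&\alpha\\\beta&1\end{smallmatrix}\right)$ is invertible over $K$ we may solve for $r$ and $s$, obtaining $r\equiv c(\alpha v-u)$ and $s\equiv c(\beta u-v)$.

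The decisive step is to upgrade $(y+t)u\equiv 0$ to the \emph{complementary} annihilator $(x+z)u\equiv 0$ (and symmetrically $(y+t)v\equiv 0$). Rewriting $s\equiv c(\beta u-v)$ as $\beta u\equiv v+c^{-1}s$, I would multiply by $x$, use $xv\equiv 0$, and reduce $xs=xy-xt$ via $xy\equiv zt$, $xt\equiv zy$ to $-zs$, getting $\beta xu\equiv -c^{-1}zs$; multiplying the same identity by $z$ and using $zv\equiv 0$ gives $\beta zu\equiv c^{-1}zs$; adding and cancelling yields $\beta(x+z)u\equiv 0$, hence $(x+z)u\equiv 0$. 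The mirror computation, starting from $\alpha v\equiv u+c^{-1}r$ and using $yu\equiv tu\equiv 0$ together with $yr=xy-yz\equiv -tr$, gives $(y+t)v\equiv 0$.

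Finally, I would substitute $r\equiv c(\alpha v-u)$ into the identity $u=r(x+z)$: since $(x+z)v\equiv 0$ this gives $u\equiv -c(x+z)u$, i.e. $(1+c(x+z))u\equiv 0$; combined with $(x+z)u\equiv 0$ this forces $u\equiv 0$, that is $x^2\equiv z^2$. The identical argument with $v=s(y+t)$, $s\equiv c(\beta u-v)$, $(y+t)u\equiv 0$ and $(y+t)v\equiv 0$ gives $v\equiv 0$, i.e. $y^2\equiv t^2$. The step I expect to cost the most thought is recognizing that the annihilator coming straight out of $L_1,L_2$ acts on $u$ from the ``wrong side'' and that one must feed the linear relations of the second paragraph back through $L_1$ and $L_2$ to manufacture the $(x+z)$-annihilator; once both are in hand the collapse is immediate, and every division performed is by one of the scalars $\alpha,\beta,\alpha\beta-1,d\in K^\times$, so the manipulations genuinely stay inside $I$.
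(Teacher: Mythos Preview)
Your proof is correct and takes a genuinely different route from the paper's. The paper argues asymmetrically: it first builds a specific chain $a_1,a_2,a_3,a_4\in I$ culminating in $\beta z^2+tz\in I$, applies the involution $\sigma$ to get $\beta x^2+xy\in I$, and subtracts (using $xy\equiv tz$) to conclude $x^2\equiv z^2$; then, for $y^2\equiv t^2$, it separately substitutes $x\equiv z+\alpha(t-y)$ (obtained from $L_5-L_6$) into $L_1$ and $L_2$ and manipulates.

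Your approach is more symmetric and structural: you package both targets as $u=x^2-z^2$ and $v=y^2-t^2$, systematically read off annihilators $yu,tu,xv,zv\equiv 0$ from $xL_1,xL_2,yL_1,yL_2$, solve the linear system coming from $L_5-L_6$ and $L_7-L_8$ to express $r,s$ in terms of $u,v$, and then feed these back to upgrade the annihilators to $(x+z)u\equiv 0$ and $(y+t)v\equiv 0$, after which the factorizations $u=r(x+z)$, $v=s(y+t)$ collapse. This buys uniformity (the two congruences fall in parallel), makes transparent exactly where $\alpha\beta\neq 1$, $\alpha,\beta\in K^\times$, and $l\neq p$ are used, and avoids the paper's somewhat ad hoc construction of the $a_i$. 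The paper's argument is shorter on paper but harder to discover; yours is a cleaner template that would adapt more readily to analogous situations.
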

\begin{proof}
To begin, we construct some new polynomials in $I$, namely: 
\begin{align*}
 a_1:= & t L_1-y L_2=y^2 z-t^2 z,\\
a_2 := & p a_1 - z L_8=\b z^2-l t^2 z-m t y z-n t y z-p t^2 z+t z,\\
a_3:= &l L_1 -L_3=l (x y-t z)-(l x y+m t x+n y z+p t z)\equiv\\
&-l t z-m y z-n y z-p t z\in I,\\
a_4:= &t a_3+z L_8\equiv -\b z^2-p t^2 z+p y^2 z-t z.  
\end{align*}

Given that $a_1\in I$, one has that $a_4\equiv -\beta z^2-t z\in I$. So, $I\ni\s(a_4)\equiv -\beta x^2-y x$. Consequently $-\beta(z^2-x^2)-t z+x y\equiv 0$ and $L_1\in I$ implies 
$z^2-x^2\equiv 0$.
Next, we prove that we also have $t^2-y^2\equiv 0$. To do this, notice that $L_5$ and $L_6$ give: $(p+l) z^2+(m+n) z x-x-\a y\equiv 0$, $(p+l) z^2+(m+n) z x-z-\a t\equiv 0$ and whence $-x+z-\a y+\a t\equiv 0$, or $x\equiv z+\a(t-y)$. Replacing this value of $x$ in $L_1$, we obtain that 
\begin{equation}\label{ceo} 0\equiv x y-t z\equiv
zy+\a(t-y)y-tz.\end{equation}
Replacing the same value of $x$ in $L_2$ gives:
\begin{equation}\label{oec}
0\equiv tx-yz\equiv t(z+\a(t-y))-yz=tz+\a t(t-y)-yz.
\end{equation}
Then, summing the results of \eqref{ceo} and \eqref{oec}, we have that
$ y(t-y)+t(t-y)\equiv 0$, that is $t^2\equiv y^2$.
\end{proof}
Summarizing, in case $m\ne n$, Proposition~\ref{nonameth} says that $x^2\equiv (x^*)^2$ and $y^2\equiv (y^*)^2$. Then, Lemma \ref{eqp} 
implies that the universal associative and commutative representation can not be faithful (given that $A_{5,\a,\b}$ is not an associative algebra). \begin{center}\fbox{Case $p \neq l$, $m = n = 0$}
\end{center}
In this case, the following polynomials are in $I$:
\begin{align*}
    (1) &\ l xy+pzt, \ & (4) & \ l z^2+p x^2-z-\a t \\
    (2) & \ l t z+p x y \ & (5) & \  l y^2+p t^2-\b x-y\\
    (3) & \ l x^2+p z^2-x-\a y & \ (6) & \ l t^2+p y^2-\b z-t
\end{align*}
The first two polynomials give that $\small(xy,zt)\begin{pmatrix}
    l & p\\
    p & l\end{pmatrix}\in I\times  I$
 \begin{center}\fbox{Case $p\neq l$, $m = n = 0$, $p^2 \neq l^2$}
\end{center}
Then $xy,zt\in I$. Hence, from the third item above, we obtain that 
   $pz^2y-\a y^2\in I$ and so $y^2 t\in I$ and $y t^2\in I$.
   Moreover, the sixth polynomial above gives $l t^3-t^2\in I$ (and $l y^3-y^2\in I$).
   The fourth polynomial gives $px^2t-\a t^2\in I,$ and the fifth gives $pt^2x-\b x^2\in I$. Whence, $x^2 z\in I$ (and $x z^2\in I$). Item (4) gives $lz^2y-zy-\a ty\in I$. Then
   \begin{align}
       \begin{split}
            l(pz^2y-\a y^2)-p(l z^2 y-z y-\a t y)=-l\a y^2+p zy+p\a ty & \in I\\
            -l\a y^3+p z y^2\equiv -\a y^2+pzy^2 & \in I.
       \end{split}
   \end{align}

    Taking into account that $pz^2y-\a y^2\in I$, we also get
  $p(zy^2-z^2y)\in I$.                \begin{center}\fbox{Case $p = m = n = 0$, $l \neq 0$}
\end{center}
 Now, we immediately get $x^2=z^2=y^2=t^2\in I$ and hence (3) and (5) above give $(x,y)\begin{pmatrix}1 & \b\\ \a & 1\end{pmatrix}\in I\times I$. Since the matrix above is invertible, we get that $x,y\in I$.
\begin{center}\fbox{Case $p \notin \{l,0\}$, $m =n = 0$, $p^2 \neq l^2$}
\end{center}
Then $zy^2-z^2 y$, $xt^2-x^2t\in I$. Also, we had proved 
$px^2t-\a t^2\in I$ and $pt^2x-\b x^2\in I$. Thus, $\a t^2\equiv \b x^2$ and $\a y^2\equiv \b z^2$. We introduce these congruences in (3)-(6) above and obtain that all the following polynomials are in $I$
\begin{align*}
    lx^2+p \frac{\a}{\b}y^2-x-\a y, & \ l y^2+p \frac{\b}{\a}x^2-\b x-y, \\
    l\frac{\a}{\b}y^2+px^2-z-\a t, & \ l \frac{\b}{\a}x^2+p y^2-\b z-t.
\end{align*}
From the first of these equations, we obtain that $l x^3-x^2\in I$ and 
$ \frac{p}{\b}y^3- y^2\in I$. But we had that $l y^3-y^2\in I$ and
hence $\tiny (y^3,y^2)\begin{pmatrix}p & l\\ -\b & -1\end{pmatrix}\in I\times I$.
\begin{center}
    \fbox{Case $p \notin \{l,0\}$, $m = n = 0$, $p^2 \neq l^2$, $p \neq \beta l$}
\end{center}
 We have $y^2\in I$ and $t^2\in I$ (and given that $\a t^2\equiv\b x^2$ we also have $x^2\in I$ and $z^2\in I$). We conclude as before that $x,y,z,t\in I$. 
 \begin{center}\fbox{Case $p\notin \{l,0\}$, $m =n = 0$, $p^2 \neq l^2$, $p = \beta l$}
\end{center}
Then, we have the next polynomials in $I$
\begin{equation}\label{cold}
    \begin{split}
        l x^2+\alpha  l y^2-x-\alpha  y &, \\
        \beta ^2 l x^2+\alpha  l y^2-\alpha  \beta  t-\beta  z,&  
    \end{split}
    \
    \begin{split}
    \beta ^2 l x^2+\alpha  l y^2-\alpha  \beta  x-\alpha  y,& \\
     \beta  l x^2+\alpha  \beta  l y^2-\alpha  t-\alpha  \beta  z.& 
    \end{split}
\end{equation}
so that from the first and third polynomials we get $(1-\a\b)x\equiv l(1-\b^2)x^2$. Since $\a\b\ne 1$, we have that $x$ is a scalar multiple of $x^2$ modulo $I$. But we had that $zx^2\equiv 0$, so we also get that $xz\equiv 0$. Now, multiplying the polynomials in \eqref{cold} by $x$, reducing, and taking into account that $xz, xy, zt\equiv 0$, we get that 
$lx^3-x^2$, $\b l x^3-\a t x$, and $\b l x^3-\a x^2$ are in $I$. Therefore, 
$(\b-\a)x^2\in I$. Since $\a\ne\b$ we obtain that $x^2\in I$. Hence, $z^2\in I$
and the relation $\a t^2\equiv \b x^2$ implies that $t^2,y^2\in I$. Then, $x+\a y, \a\b x+\a y\in I$ and we conclude that 
$x,y\in I$.
\begin{center}\fbox{Case $p\ne l$, $m = n = 0$, $p^2=l^2$,  $p = -l$}
\end{center}
Under this assumption, we have that the following polynomials are in $I$:
\begin{align*}
    (1) \ & xy-zt, \  & (4) \ &  l(y^2- t^2)-\b x-y, & \\
        (2) \ & l (x^2- z^2)-x -\a y ,\  & (5) \ & l(t^2- y^2)-\b z-t, & \\
       (3) \ &  l (z^2- x^2)-z-\a t, \ & &
\end{align*}
Adding the second and third polynomials, we obtain $x+\a y+z+\a t\equiv 0$. Similarly, from the fourth and fifth polynomials, we get that
These two congruences, we get that $x\equiv -z$ and
$y\equiv -t$ from these two congruences. This gives $x^2\equiv z^2$ and $y^2\equiv t^2$, which implies, by Lemma~\ref{eqp}, that the associative and commutative representation is not faithful. 
\begin{center}\fbox{Case $p\neq l$, $m = n = 1$}
\end{center}
Recall that we are under the hypothesis that $l\ne p$. We subtract the first and third polynomials in the fundamental system \eqref{fsys} to get that $xy-zt\in I$.
So, $I$ is the ideal generated by $l_i$ $(i=1,\ldots,8)$, where
\begin{align*}
    l_1= &x y-t z, \ & l_5=&p t^2+ 2y t+l y^2-\b x-y, \\
    l_2=& l x y+ t x+ y z+p t z, \ & l_6=&l t^2+ 2 y t+p y^2-\b z-t,\\ 
    l_3=&l x^2+ 2 x z +p z^2-x-\a y, \ & l_7=&l t z+t x+y z+p xy. \\
    l_4=&p x^2+ 2 x z +l z^2-z-\a t,& & 
\end{align*}
Notice that we have  $\a yt-\b x z = \sum_{i=1}^7 l_is_i \in I$ (see \url{http://agt2.cie.uma.es/form11.pdf}). 

Moreover,

we also have that $\a y^2-\b z^2\in I$ (see \url{http://agt2.cie.uma.es/form12.pdf}) and $\a t^2-\b x^2\in I$.
Furthermore, for a new collection of $s_i$ we have  $-\b t x+\b x^2+\b y z-\b z^2= \sum_{i=1}^7 l_is_i \in I$ (see \url{http://agt2.cie.uma.es/form13.pdf})

so that $- t x+ x^2+ y z- z^2\in I$. Finally, $ -\a t x+\a y z+\b t x-\b y z \in I$ (see \url{http://agt2.cie.uma.es/form14.pdf})

from which we deduce that $\a(yz-xt)-\b (yz-xt)\equiv 0$, that is, $yz-xt\equiv 0$ (since $\a\ne\b$). 

Now, $I\ni (xy-zt)t-(xt-yz)y=y^2z-t^2z$ and $t^2x-y^2x\in I$. Also 
$$\begin{cases}zl_5 \equiv (p+l)t^2z+2ytz-\b xz-y z\\ zl_6 \equiv (l+p)t^2z+2ytz-\b z^2-t z,\end{cases}$$
and, subtracting these polynomials, we have that $I\ni \b(xz-z^2)+yz-tz$. So, applying the involution we have that both
$$\begin{cases} \b(xz-z^2)+yz-tz, \\ \b(xz-x^2)+tx-yx
\end{cases}$$
belong to $I$. Subtracting once more we get that $\b(x^2-z^2)+tx-yx-yz+tz\in I$, and using that $xy\equiv tz$ and $xt\equiv yz$ we get that $x^2-z^2\in I$. Furthermore, since we had $\a t^2\equiv \b x^2$ and $\a y^2\equiv \b z^2$, we conclude
that $y^2\equiv t^2$. Then, Lemma~\ref{eqp} implies that the universal associative and commutative representation is not faithful.

\begin{theorem}\label{Ipanema_girl}
The evolution algebra $A_{5,\a,\b}$, where $\alpha,\beta\in K^\times$, $\alpha\ne\beta$ and $\alpha\beta\ne 1$, has no associative and commutative faithful representation. 
\end{theorem}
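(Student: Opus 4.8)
The plan is to use the universal property: every associative and commutative representation of $A_{5,\a,\b}$ factors through $\rho\colon A_{5,\a,\b}\to\U_q$ for some product $q=l\,ab+m\,ab^*+n\,a^*b+p\,a^*b^*$, so it suffices to prove that $\rho$ is not faithful for every choice of $l,m,n,p\in K$. Concretely, in $\U_q=K[x,y,x^*,y^*]/I$, with the generators of $I$ as in \eqref{fsys} and writing $z:=x^*$, $t:=y^*$, I would show in each parameter regime that either $\bar x$ or $\bar y$ lies in $I$, or $\bar x$ and $\bar y$ are linearly dependent in $\U_q$, or else $x^2\equiv(x^*)^2$ and $y^2\equiv(y^*)^2$ modulo $I$. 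In the last situation Lemma~\ref{eqp} applies: $A_{5,\a,\b}$ is perfect, since its $2\times2$ structure matrix has determinant $1-\a\b\ne0$, and it is not associative, so $\rho$ cannot be faithful. The argument is thus a case analysis on the relative values of $l,m,n,p$.

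The first cases are direct. If $p=l$, subtracting generator $(5)$ from $(6)$ and $(7)$ from $(8)$ gives $(x-z)+\a(y-t)\equiv0$ and $\b(x-z)+(y-t)\equiv0$; since $1-\a\b\ne0$ this forces $x\equiv z$ and $y\equiv t$, and Proposition~\ref{rose} finishes the case. If $p\ne l$ and $m\ne n$, differences among the first three generators give $xt\equiv zy$ and $xy\equiv zt$; then, exploiting the involution $\s$ of $K[x,y,z,t]$ that swaps $x\leftrightarrow z$, $y\leftrightarrow t$ and preserves $I$, a short chain of reductions yields $x^2\equiv z^2$ and $y^2\equiv t^2$ (this is Proposition~\ref{nonameth}), and Lemma~\ref{eqp} applies.

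It remains to treat $p\ne l$ with $m=n$, where after normalization one may assume $m=n\in\{0,1\}$. For $m=n=0$ one splits on whether $p^2\ne l^2$ or $p=-l$. In the $p^2\ne l^2$ branch one distinguishes further the subcases $p=0$, $p=\b l$ and the generic one; in each, repeatedly multiplying the generators by $x,y,z,t$ and eliminating drives $\bar x$ and $\bar y$ into $I$. In the $p=-l$ branch one obtains $x\equiv-z$ and $y\equiv-t$, hence again $x^2\equiv z^2$, $y^2\equiv t^2$, and Lemma~\ref{eqp}. For $m=n=1$ one subtracts generators $(1)$ and $(3)$ to get $xy\equiv zt$, and then an elimination computation (certified by the supplementary files) shows successively that $\a yt-\b xz$, $\a y^2-\b z^2$, $\a t^2-\b x^2$, $-tx+x^2+yz-z^2$ and $\a(yz-xt)-\b(yz-xt)$ all lie in $I$; since $\a\ne\b$, the last gives $yz\equiv xt$, and assembling these congruences yields $x^2\equiv z^2$ and $y^2\equiv t^2$, so Lemma~\ref{eqp} applies a final time and all cases are covered.

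The conceptual skeleton is uniform: every branch terminates at Proposition~\ref{rose} or at Lemma~\ref{eqp}. The main obstacle is computational rather than conceptual: in the $m=n=1$ case and the $p^2\ne l^2$ subcases of $m=n=0$ one must exhibit explicit polynomial cofactors $s_i$ certifying identities $\sum_i q_i s_i=t$ (or $=x$, $=\a t$, and so on), which is exactly where Gr\"obner-basis software is indispensable. A secondary difficulty is the bookkeeping: arranging the case tree so that no parameter regime is omitted, while tracking how the standing hypotheses $\a\b\ne1$, $\a\ne\b$, $\a,\b\ne0$ interact with the dichotomy $\car(K)=2$ versus $\car(K)\ne2$.
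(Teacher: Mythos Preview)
Your proposal is correct and follows essentially the same approach as the paper: the same case tree on $(l,m,n,p)$, the same reductions to either Proposition~\ref{rose} or Lemma~\ref{eqp}, the use of Proposition~\ref{nonameth} for the $m\ne n$ branch, and the same reliance on explicit polynomial cofactors (the supplementary files) in the $m=n=1$ and $m=n=0,\ p^2\ne l^2$ subcases. The only cosmetic difference is organizational; the mathematical content is identical.
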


\subsection{The $\boldsymbol{\text{A}_{5,\alpha, \a}}$  algebra} 

We analyze the evolution algebra with natural basis $\{e_1,e_2\}$ such that $e_i^2=e_i+\a e_j$ with $i\ne j$ and $\a\ne 0,\pm 1$.  We have proved that the Hopf algebra representing the affine group scheme $\affaut(A_{5,\a,\a})$ is $\H_{5,\a,\a}= K[x]/(x^2-x)$, which is two-dimensional and isomorphic as $K$-algebra to $K^2$ with component-wise product.
As was proved in \cite{squares}, we have a faithful associative and commutative representation of $A_{5,\a,\a}$ in the algebra
$K^2$ with exchange involution and componentwise product relative to $p(a,b)=ab+\a a^*b^*$. So, in this case, the Hopf algebra representing
$\affaut(A_{5,\a,\a})$ supports a faithful associative and commutative representation. 
We have $\U_p=K[x,y,z,t]/I$ (where $z$ plays the roll of $x^*$ and $t$ the one of $y^*$), and $I$ is generated by 
$$\begin{cases}
i_0=x^2-x+\a z^2  -\a y, & i_3=t^2-t+\a y^2  -\a z,\\
i_1=\alpha  x^2+z^2-z-\a t, & i_4=x y+\a t z,\\
i_2= \alpha  t^2+y^2-y-\a x, & i_5=z t+\a x y.\\
\end{cases}$$
Note that from $i_4$ and $i_5$ we get $xy, zt\in I$.
So from $i_0$ we have $z^2 y-y^2\in I$ and whence $x^2 t-t^2\in I$. From $i_3$ we 
deduce that $y^2 z-z^2\in I$ and hence $t^2 x-x^2\in I$. Now,
$$ z^2 y-y^2 \in I \Rightarrow z^2y t-y^2 t \in I\Rightarrow y^2t\in I\Leftrightarrow t^2y\in I.$$ Similarly, from $ y^2 z-z^2 \in I$ we get that $x y^2 z -z^2x \in I$ and so $z^2x\in I$ and $x^2 z\in I$. Also, $i_3$ gives $t^3-t^2\in I$ and so
$y^3-y^2\in I$. Moreover, $i_0$ gives $x^3-x^2\in I$ and so $z^3-z^2\in I$. Furthermore, since $z^2y-y^2\in I$, we have that $z^2y^2-y^3\in I$ and, since $y^2z-z^2\in I$, we also
have that $y^2z^2-z^3\in I$. Then, $y^3-z^3\in I$ which implies that $y^2-z^2\in I$ and $t^2-x^2\in I$. Now, 
$i_0-i_3=x^2-t^2+\a(z^2-y^2)-x-\a y+t+\a z\in I$ implying
$$\begin{cases}-x-\a y+t+\a z\in I\\ -z-\a t +y+\a x\in I\end{cases}\qquad
\begin{cases}(t-x)+\a(z-y)\in I\\ \a(t-x)+(z-y)\in I\end{cases}$$ and whence
$t-x,z-y\in I$, given the conditions on $\a$. Now, using $t\equiv x$ and $z\equiv y$ modulo $I$ we find that $i_0\equiv x^2-x+\a y^2-\a y$
and $i_1=\a x^2+y^2-y-\a x$ (while $i_2, i_3$ collapse). So, $I=(x^2+\a y^2-x-\a y, \a x^2+y^2-y-\a x, xy)$. Thus, 
$$(x^2,y^2)\begin{pmatrix}1 & \a\\ \a & 1\end{pmatrix}=(x,y)\begin{pmatrix}1 & \a\\ \a & 1\end{pmatrix}$$ which implies $x^2\equiv x$, $y^2\equiv y$ and so 
$\U_p=K[x,y,z,t]/(x^2-x,y^2-y, xy, t-x,z-y)\cong K[x,y]/(x^2-x,y^2-y,xy)=K\bar 1\oplus K\bar x\oplus K\bar y$ is $3$-dimensional with basis 
$\{\bar 1,\bar x,\bar y\}$. Of course $\T_p=K\bar x\oplus K\bar y$ with multiplication $\bar x^2=\bar x$, $\bar y^2=\bar y$, $\bar x\bar y=0$, that is, 
$\T_p\cong K^2$ with component-wise product. Thus, $\T_p\cong \H_{5,\a,\a}$. An easy computation reveals that $\aut(A_{5,\a,\a})=\aut^*(\T_p)=\aut(\T_p)$. Again, the tight universal $q$-algebra agrees with the underlying algebra of the Hopf algebra representing its affine group scheme of automorphisms.
\begin{theorem}
    For the $A_{5,\a,\a}$ algebra ($\a\ne 0,\pm 1$) the tight $p$-algebra $\T_p$
    agrees with $\H_{5,\a,\a}$ (as algebra) and is isomorphic to $K^2$ with component-wise product. For the automorphism groups we have $\aut(A_{5,\a,\a})=\aut^*(\T_p)=\aut(\T_p)$.
\end{theorem}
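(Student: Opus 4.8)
The plan is to compute the algebras $\U_p$ and $\T_p$ explicitly for the product $p(a,b)=ab+\a a^*b^*$ — the one for which a faithful associative and commutative representation of $A_{5,\a,\a}$ in $K^2$ is already available — and then to read off the three automorphism groups. Writing $z$ for $x^*$ and $t$ for $y^*$, substituting $p$ into the relations $e_i^2=e_i+\a e_j$ ($i\ne j$) and adjoining the $*$-images presents $\U_p=K[x,y,z,t]/I$ with $I$ generated by
\[
\begin{cases}
i_0=x^2+\a z^2-x-\a y, & i_3=t^2+\a y^2-t-\a z,\\
i_1=\a x^2+z^2-z-\a t, & i_4=xy+\a tz,\\
i_2=\a t^2+y^2-y-\a x, & i_5=zt+\a xy.
\end{cases}
\]
The one structural fact to be used repeatedly is that the matrix $M:=\bigl(\begin{smallmatrix}1&\a\\\a&1\end{smallmatrix}\bigr)$ is invertible, since $\det M=1-\a^2\ne0$ precisely because $\a\ne\pm1$.

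First, from $(xy,zt)\,M=(i_4,i_5)$ and the invertibility of $M$ one gets $xy,zt\in I$. Multiplying $i_0$ and $i_3$ by suitable indeterminates and reducing modulo the relations already in hand then yields, in succession, $z^2y\equiv y^2$, $y^2z\equiv z^2$ (and, applying $*$, $x^2t\equiv t^2$, $t^2x\equiv x^2$), next $y^2t,\,t^2y,\,z^2x,\,x^2z\in I$, then $x^3\equiv x^2$, $y^3\equiv y^2$, $z^3\equiv z^2$, $t^3\equiv t^2$, and finally $y^2\equiv z^2$, $t^2\equiv x^2$. With the quadratic parts thus identified, $i_0-i_3$ reduces to the linear system $(t-x)+\a(z-y)\equiv0$, $\a(t-x)+(z-y)\equiv0$, whose coefficient matrix is again $M$; hence $t\equiv x$ and $z\equiv y$ modulo $I$.

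Substituting $t\equiv x$ and $z\equiv y$ collapses the generators of $I$ to $x^2+\a y^2\equiv x+\a y$, $\a x^2+y^2\equiv\a x+y$ and $(1+\a)xy\equiv0$; since $1+\a\ne0$ this gives $xy\equiv0$, and applying $M^{-1}$ once more forces $x^2\equiv x$, $y^2\equiv y$. Therefore $\U_p\cong K[x,y]/(x^2-x,y^2-y,xy)$, a $3$-dimensional algebra with basis $\{\bar1,\bar x,\bar y\}$ (independence follows from the evaluation homomorphism to $K^3$ at the points $(0,0),(1,0),(0,1)$), and $\T_p=K\bar x\oplus K\bar y$ with $\bar x,\bar y$ orthogonal idempotents summing to $\bar1$; that is, $\T_p\cong K^2$ with componentwise product. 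Since $\H_{5,\a,\a}=K[x]/(x^2-x)\cong K^2$ was determined in Section~\ref{Ivanov}, we conclude $\T_p\cong\H_{5,\a,\a}$ as algebras, and in particular $\rho$ is faithful because $\bar x,\bar y$ are nonzero and linearly independent in $\U_p$.

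For the automorphism groups, the only idempotents of $\T_p\cong K^2$ other than $0$ and $\bar1$ are $\bar x$ and $\bar y$, so any algebra automorphism either fixes both or interchanges them, whence $\aut(\T_p)$ is cyclic of order $2$. The involution of $\T_p$ induced by $x\mapsto x^*$ is exactly the exchange $\bar x\leftrightarrow\bar y$ (as $z\equiv y$ and $t\equiv x$), and both the identity and the exchange commute with it, so $\aut^*(\T_p)=\aut(\T_p)$. Finally, $\aut(A_{5,\a,\a})$ is the cyclic group of order $2$ generated by $e_1\leftrightarrow e_2$ (computed in Section~\ref{Ivanov}), and since $\rho(e_1)=\bar x$, $\rho(e_2)=\bar y$, the embedding $\aut(A_{5,\a,\a})\hookrightarrow\aut^*(\T_p)$ coming from faithfulness of $\rho$ sends the generator to the exchange and is therefore onto; hence $\aut(A_{5,\a,\a})=\aut^*(\T_p)=\aut(\T_p)$. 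There is no conceptual obstacle: all the weight rests on the three invocations of the invertibility of $M$, which is exactly where the hypotheses $\a\ne0,\pm1$ enter, and the only genuine labour is the bookkeeping of the monomial reductions in the second paragraph — a finite, routine Gröbner-style computation in $K[x,y,z,t]$.
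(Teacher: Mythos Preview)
Your proof is correct and follows essentially the same approach as the paper: the same choice of $p$, the same generators $i_0,\ldots,i_5$, the same chain of monomial reductions leading to $t\equiv x$, $z\equiv y$, and the same final identification $\U_p\cong K[x,y]/(x^2-x,y^2-y,xy)$. You add a couple of details the paper leaves implicit --- the evaluation-at-three-points argument for the linear independence of $\{\bar1,\bar x,\bar y\}$, and the explicit verification of the automorphism-group equalities via the idempotent count --- but the structure and the key steps coincide.
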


\subsection{The $\boldsymbol{\text{A}_{5}}$ algebra}
In this case a natural basis is $\{e_1,e_2\}$, with 
$e_1^2=e_1-e_2$ and $e_2^2=-e_1+e_2$. Assume that $\car(K)\ne 2$. In \cite{squares}, we found a faithful associative and commutative representation $\rho\colon A_5\to K^2$ 
such that $e_1\mapsto u_1:=(-\frac{1}{4},1)$ and $e_2\mapsto u_2:=(-\frac{1}{4},-1)$,
where the algebra structure of $K^2$ is $(x,y)(z,t)=(xz-yt,xt+yz)$, its involution is $(x,y)^*=(x,-y)$ and we consider the product $p(a,b)=-2 ab+2a^*b^*$. 
If $\car(K)=2$, there is also a faithful commutative and associative representation.
Consider $K^2$ with componentwise product and exchange involution. Take $p(a,b)=ab+a^*b^*$ and $\rho\colon A_5\to K^2$ such that $e_1\mapsto (1,0)=:u_1$ while  $e_2\mapsto (0,1)=:u_2$. Then
$\rho(e_1^2)=\rho(e_1+e_2)=(1,1)$ and 
$p(u_1,u_1)=u_1^2+u_2^2=u_1+u_2=(1,1)$. Similarly
$\rho(e_2^2)=\rho(e_1+e_2)=(1,1)$ and 
$p(u_2,u_2)=u_2^2+u_1^2=u_1+u_2=(1,1)$. Also 
$p(u_1,u_2)=u_1u_2+u_2u_1=0$. 
\begin{theorem} Regardless of the characteristic of the ground field, the $A_5$ algebra has a faithful associative and commutative representation. If $\car{(K)}\ne 2$ then the product considered is $p(a,b)=-2ab+2a^*b^*$, while in case $\car{(K)}=2$ the product is $p(a,b)=ab+a^*b^*$. 
\end{theorem}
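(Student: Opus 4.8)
The plan is to exhibit, in each characteristic, a concrete commutative, associative, unital $K$-algebra with involution $(V,{}^*)$ together with an injective $K$-linear map $\rho\colon A_5\to V$ sending the natural basis $\{e_1,e_2\}$ to suitable elements, and then to verify the three defining identities $\rho(e_1^2)=p[\rho(e_1),\rho(e_1)]$, $\rho(e_2^2)=p[\rho(e_2),\rho(e_2)]$ and $\rho(e_1e_2)=p[\rho(e_1),\rho(e_2)]$. Since $p$ is bilinear (the involution being $K$-linear) and $\rho$ is linear, these three checks suffice to guarantee $\rho(uv)=p[\rho(u),\rho(v)]$ for all $u,v\in A_5$, i.e.\ that $\rho$ is a homomorphism relative to $p$; injectivity then yields faithfulness.

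\textbf{Characteristic $\neq 2$.} I would take $V=K^2$ with the ``Gaussian'' product $(x,y)(z,t)=(xz-yt,xt+yz)$, the involution $(x,y)^*=(x,-y)$, and $p(a,b)=-2ab+2a^*b^*$; this $V$ is commutative, associative and unital, so $p$ is admissible. Put $u_1:=(-\tfrac{1}{4},1)$, $u_2:=(-\tfrac{1}{4},-1)$ and define $\rho(e_1)=u_1$, $\rho(e_2)=u_2$. A direct computation with the above product and involution gives $p(u_1,u_1)=u_1-u_2=\rho(e_1^2)$, $p(u_2,u_2)=u_2-u_1=\rho(e_2^2)$ and $p(u_1,u_2)=0=\rho(e_1e_2)$, so $\rho$ is a homomorphism relative to $p$. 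The $2\times 2$ matrix with rows $u_1,u_2$ has determinant $\tfrac{1}{2}\in K^\times$, hence $\{u_1,u_2\}$ is linearly independent and $\rho$ is injective, i.e.\ faithful.

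\textbf{Characteristic $2$.} Here $e_1^2=e_1+e_2=e_2^2$. I would take $V=K^2$ with the componentwise product, the exchange involution $(x,y)^*=(y,x)$, and $p(a,b)=ab+a^*b^*$. Put $u_1:=(1,0)$, $u_2:=(0,1)$ and define $\rho(e_1)=u_1$, $\rho(e_2)=u_2$. Then $p(u_1,u_1)=u_1^2+(u_1^*)^2=u_1+u_2=(1,1)=\rho(e_1^2)$, similarly $p(u_2,u_2)=u_2+u_1=(1,1)=\rho(e_2^2)$, and $p(u_1,u_2)=u_1u_2+u_1^*u_2^*=0=\rho(e_1e_2)$. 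Since $\{u_1,u_2\}$ is the standard basis of $K^2$, $\rho$ is injective, hence faithful.

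\textbf{Where the work lies.} There is no conceptual obstruction: once the target algebra and the image vectors are fixed, the verification is a pair of short finite computations. The only non-routine point is guessing the right data — recognising that the relations $e_1^2=-e_2^2$ and $e_1e_2=0$ are matched inside the Gaussian algebra $K^2$ and that the coefficient $-2$ in $p$ pins down $u_1,u_2=(-\tfrac{1}{4},\pm1)$ when $\car(K)\neq 2$, and that the componentwise algebra with exchange involution does the job when $\car(K)=2$. After that, faithfulness is immediate from the determinant (respectively, from the images forming a basis of $K^2$).
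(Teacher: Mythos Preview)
Your proposal is correct and follows essentially the same approach as the paper: the same target algebras, involutions, products $p$, and image vectors $u_1,u_2$ are used in both cases, and the verifications are identical. If anything, you supply slightly more detail in the $\car(K)\neq 2$ case than the paper, which simply cites an earlier reference for that representation.
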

\subsection{The $\boldsymbol{\text{A}_{6}}$  algebra}
For this algebra, a natural basis is $\{e_1,e_2\}$ with $e_1^2 = 0$, $e_2^2 = e_1$. Since this algebra is associative and commutative,  a faithful representation of this kind is $K\times K$ with product $(x,y)(z,t)=(yt,0)$, identity involution, and $p(a,b)=ab$. The representation is $\rho\colon A_6\to K\times K$ with $e_1\mapsto(1,0)$ and $e_2\mapsto(0,1)$.
\subsection{The $\boldsymbol{\text{A}_{7}}$  algebra}

For this algebra, a natural basis $\{e_1,e_2\}$ is such that $e_1^2 = e_1$ and $e_2^2 = 0$. This algebra is also associative and commutative, and a faithful representation of this kind is $K\times K$ with product $(x,y)(z,t)=(xz,0)$, identity involution, and $p(a,b)=ab$. The representation is $\rho\colon A_7\to K\times K$ with $e_1\mapsto(1,0)$ and $e_2\mapsto(0,1)$. 
\textcolor{magenta}{}
\subsection{The  $\boldsymbol{\text{A}_{8,\alpha}}$  algebra}
For this algebra, a natural basis $\{e_1,e_2\}$ is such that $e_1^2 = e_1$ and $e_2^2 = \a e_1$ with $\a \in K^\times$.
If $\car(K)\ne 2$ consider $$q(a,b)=k_\a(ab+a^*b^*)+h_\a(ab^*+a^*b),$$
where $k_\a=\frac{1}{4} (1-16 \alpha )$ and 
$h_\a=\frac{1}{4} (1+16 \alpha )$. 
In this case, a  faithful associative representation (relative to $q$ above) is $K\times K$  with (associative) product $(x,y)(z,t)=(xz-yt, xt+yz)$ and involution $(x,y)^*=(x,-y)$.
The faithful representation is $\psi\colon A_{8,\a}\to K\times K$, with 
$\psi(e_1)=(1,0)$, $\psi(e_2)=(0,1/4)$. The nonassociative product $q$ above, particularized to this algebra, gives $(x,y)\cdot (z,t)=(16\alpha ty+xz,0)$.

Next, we analyze the $\car(K)=2$ case.
The set of generators of $I$ is 
\begin{equation}\label{orpoc}
    \begin{split}
        l x^2+m x z+n x z+p z^2-x, &\cr
        l z^2+m x z+n x z+p x^2-z, &\cr
        l y^2+m t y+n t y+p t^2-\alpha  x, & \cr
        l t^2+m t y+n t y+p y^2-\alpha  z, \cr
    \end{split}
    \ \ 
    \begin{split}
        l x y+m t x+n y z+p t z, &\cr
        l x y+m y z+n t x+p t z,& \cr
        l t z+m y z+n t x+p x y, & \cr
        l t z+m t x+n y z+p x y.& 
    \end{split}
\end{equation}
\centerline{\fbox{Case $l=p$}}
From the first two equations, we obtain that $x\equiv z$. So, putting $\lambda=l+m+n+p$, the following elements are in $I$: $\l x^2-x, 
l (y^2+t^2)+(m+n) t y-\alpha  x, \ l (x y+t x)+m t x+n x y, \ l (x y+t x)+m x y +n t x.$
Then $(m+n)(t x+x y)\equiv 0$ and we divide the proof into two subcases.

 \centerline{\fbox{Case $l = p$,  $m =   n$}} 
 Then $\l=0$ and so $x\in I$. Hence, the universal associative and commutative representation is not faithful.

\centerline{\fbox{Case $l = p$,  $m \ne n$}}  
Then $t x+x y\equiv 0$ and the following elements are in $I$:  $\l x^2-x, \
l (y^2+t^2)+\l t y-\alpha  x, \
\l x y$. Since $\l=m+n\ne 0$, then we have that $x y\equiv t x\equiv 0$. Multiplying the second equation by $x$, we get that $x^2\equiv 0$, so $x\in I$ and the universal associative and commutative representations are not faithful.

\centerline{\fbox{Case $l\ne p$}}
In this case, we have that $(l+p)(x^2+z^2)+x+z\equiv 0$, which implies that 
$(x+z)[(l+p)(x+z)+1]\equiv 0$. 
Define $k:=(l+p)^{-1}\ne 0$ and consider the ideals  $J_1:=(x+z)+I$ and $J_2:=(x+z+k)+I$, which are coprime ideals
so that $J_1J_2\subset I\subset J_1\cap J_2=J_1J_2$. Therefore, $I=J_1J_2$.
To prove that $x\in I$ we will prove that $x\in J_1$ and $x\in J_2$. We start proving that $x\in J_1$, dividing the proof into a few subcases.

     \centerline{\fbox{Case $l\ne p$ and $m=n$}} 
     Since $x+z\in J_1$, we can replace $z$ with $x$ in the system \eqref{orpoc} to get that $J_1$ contains the elements 
    \begin{equation}\label{lahemos}
        \begin{split}
           (1) \ (l+p) x^2+x,& \  (4) \ l y^2+p t^2+\alpha  x, \cr
            (2) \ (l+n) x y+(n+p) t x, & \ (5) \ l t^2+p y^2+\alpha  x.\cr 
         (3) \ (l+n) t x+(n +p)x y, & 
        \end{split}
    \end{equation}
   Then, from the second and third elements above, we get that the polynomial $\left((n+p)^2+(l+n)^2\right) tx\in J_1$ and so $(p+l)^2 t x\in J_1$, whence
   $t x\in J_1$. So, $(l+n)xy$ and  $(n+p)xy\in J_1$ and therefore $(l+p)xy\in J_1$, which implies that $xy\in J_1$. Thus, by multiplying the last element in \eqref{lahemos} by $x$, we obtain that $x^2\in J_1$ and hence, using the first element in (\ref{lahemos}), we obtain that $x\in J_1$.
   
    \centerline{\fbox{Case $l\ne p$, $m\ne n$, $p=0$}}  In this case, from \eqref{orpoc}, we obtain the following set of elements in $J_1$ 
   \begin{align*}
      (1) \ &\l x^2+x,\   & (5)\ (l+m)x y+n t x,\\
       (2) \ & l y^2+(m+n)t y+\a x, \  & (6)\ m x y+(l+n)t x,\\
      (3) \ & l t^2+(m+n)t y+\a x, \   & (7) \ n x y+(m+l)t x, \\
    (4) \ &(l+n) xy+m t x,&  \
   \end{align*}
   where $\l=l+m+n\ne 0$ (if $\l=0$, then $x\in J_1$). Then, the last two elements imply that
   $xy$ and $tx\in J_1$, since the determinant of the matrix $\tiny\begin{pmatrix}m & l+n\\n & l+m\end{pmatrix}$ is nonzero. Therefore, from the second element, we obtain that $\a x^2\in J_1$; hence, using the first element, we obtain that $x\in J_1$.
   
    \centerline{\fbox{Case $l\ne p$,  $m\ne  n$, $p\ne 0$}}  Now, using \eqref{orpoc} a set of elements of $J_1$  is 
   \begin{align*}
      (1) \ & \l x^2+x, \ & (5) & \ (n+p)xy+(l+m)t x,\cr
      (2) \ & (l+m) x y+(n+p) t x, \ & (6) &\ l y^2+(m+n) t y+p t^2-\alpha  x,\cr
      (3) \ &  (l+n) x y+(m+p) t x, \ & (7) &\ l t^2+(m+n) t y+p y^2-\alpha  x,\cr
       (4) \ & (m+p)xy+(l+n)t x, \ & &
   \end{align*}
   where $\l=l+m+n+p$. Again $\l\ne 0$ (on the contrary $x\in J_1$). Using the second and third elements, we obtain, as before, that $xy, tx\in J_1$. From the last element, we obtain that $\a x^2\in J_1$ and, using the first element, we conclude that $x\in J_1$.
To summarize, we have that $x\in J_1$. We now prove that $x\in J_2$. 

Since $x+z+k\in J_2$ we have $z\equiv x+k$ modulo $J_2$. 
we proceed assuming $(l+p)k=1$ and $x+z+k\in J_2$. 
If we apply the congruence $x+z+k\in J_2$ to \eqref{orpoc}, and take into account that 
$(l+p)k=1$, we get the following elements in $J_2$:
\begin{equation}\label{olluruz}
    \begin{split}
       (1) \ \l k x+\l x^2+k^2 p,\cr
        (2) \ (l+m)  x y+ (n+p) t x+ k m y+ k p t,\cr
   (3) \  (l+n)  x y+(m+p) t x+ k n y+ k p t,\cr
     (4) \ (m+p) x y + (l+n)  t x+  k l t+ k m y,\cr
    \end{split}
    \ \
    \begin{split}
    (5) \ (n+p) x y+ (l+m)  t x+ k l t+ k n y,\cr
    (6) \ l y^2+(m+n) t y+p t^2+\alpha  x,\cr
   (7) \  k \alpha +l t^2 + (m+n) t y +p y^2 +\a x,
    \end{split}
\end{equation}
 where $\l=l+m+n+p$. The last two elements give that $(l+p)(y^2+t^2)\equiv k\a$ or $y^2+t^2\equiv k^2\a$ (modulo $J_2$). Moreover, the elements no. 2 and 4 in \eqref{olluruz} can be written as below
 $$\begin{pmatrix}l+m & n+p\\ m+p & l+n\end{pmatrix}\begin{pmatrix} xy\cr tx\end{pmatrix}\equiv\begin{pmatrix}km & kp\\km &kl \end{pmatrix}\begin{pmatrix}y\\t\end{pmatrix}\ \hbox{ (mod }\ J_2).$$
 Notice that the determinant of the matrix on the left-hand side above is $\l(l+p)$. We now divide the proof in two cases.
 
  \centerline{\fbox{Case $l\ne p$, $\lambda \neq 0$}}  Then 
 $$\begin{pmatrix} xy\cr tx\end{pmatrix}\equiv\begin{pmatrix}l+m & n+p\\ m+p & l+n\end{pmatrix}^{-1}\begin{pmatrix}km & kp\\km &kl \end{pmatrix}\begin{pmatrix}y\\t\end{pmatrix} \hbox{ (mod }\ J_2) $$
 and hence $xy \equiv \frac{k (m y+n t)}{\lambda }$ and $t x \equiv \frac{k (\lambda +n)}{\lambda } t+\frac{ k m}{\lambda } y$ (so $xy +\frac{k (m y+n t)}{\lambda } \in J_2$ and $t x + \frac{k (\lambda +n)}{\lambda } t+\frac{ k m}{\lambda } y \in J_2$).
  Replacing these values of $xy$ and $tx$ in the system \eqref{olluruz} we get 
  \begin{equation}\label{olluru}
      \begin{split}
    \l k x+\l x^2+k^2 p,\cr 
    (m+n) (t+y),
      \end{split}
      \ \
      \begin{split}
    l y^2+(m+n) t y+p t^2+\alpha  x,\cr
    k \alpha +l t^2 + (m+n) t y +p y^2 +\a x.
      \end{split}
  \end{equation}
If $m+n\ne 0$, then $y+t\in J_2$, so $y^2+t^2\in J_2$ and, since $y^2+t^2\equiv k^2\a$, we get a nonzero scalar $0\ne k^2\a\in J_2$. So, $J_2=(1)$. If $m+n=0$, we have the following elements in $J_2$:
\begin{equation}\label{ollur}
    \begin{split}
         \l k x+\l x^2+k^2 p,\cr 
    l y^2+p t^2+\alpha  x,\cr
    k \alpha +l t^2 + p y^2 +\a x, 
    \end{split}
    \ \
    \begin{split}
         \l xy + k n (y+ t),\cr 
     \l t x + k (\lambda +n) t+k n y.
    \end{split}
\end{equation}
But then 
\begin{align}
\begin{split}
K^*\ni \a=\alpha  \lambda  \left(\l k x+\l x^2+k^2 p\right)+\left(k \lambda  p+\lambda ^3 x^2\right) \left(l y^2+p t^2+\alpha  x\right)+\cr 
\left(k \lambda  p+\lambda +\lambda ^3 x^2\right) (k \alpha +l t^2 + p y^2 +\a x)+\cr 
\left(\lambda ^2 t+\lambda ^3 t x+\lambda ^3 x y\right) (\l xy + k n (y+ t))+\cr 
\left(\lambda ^2 t+\lambda ^3 t x+\lambda ^3 x y\right)(\l t x + k (\lambda +n) t+k n y)\in J_2.
\end{split}
\end{align}
So, $J_2=(1)$.

\centerline{\fbox{Case $l\ne p$, $\lambda = 0$}}   Then, the first element in \eqref{olluruz} implies that $J_2=(1)$, unless $p=0$. So, we investigate the case $\l=p=0$. We have that $l+m+n=0$ and hence the following polynomials are in $J_2$:
\begin{equation}\label{nojagac}
    \begin{split}
        (1) \ (l+m)  (x y+  t x)+ k m y,\cr
   (2) \ m( x y+ t x)+ k n y,\cr
   (3) \  m (x y +  t x)+  k l t+ k m y,
    \end{split}
    \ \ 
    \begin{split}
    (4) \ (l+m) (x y+  t x)+ k l t+ k n y,\cr
    (5) \ l (y^2+ t y)+\alpha  x,\cr
   (6) \ k \alpha +l (t^2 + t y)  +\a x. 
    \end{split}
\end{equation}
 Remembering that $l \neq p = 0$ and adding the second and third elements above, we obtain that
$kn y+k l t+k m y\in J_2$. Equivalently,
$k l (y+t)\in J_2$ so $y+t\in J_2$. But then, using the fifth element, we obtain that $x\in J_2$.

Thus, we have proved that $x\in J_2$ regardless of the case considered. Since we have already proved that $x\in J_1$, we conclude that $x\in I=J_1\cap J_2$ and the
universal associative and commutative representation is not faithful.
We have proved that, for $A_{8,\a}$,  the existence of a faithful associative and commutative representation depends on the characteristic of the field. More precisely, we proved the following theorem.

\begin{theorem}
The algebra $A_{8,\a}$ has a faithful associative and commutative representation if and only if $\car(K)\ne 2$. 
\end{theorem}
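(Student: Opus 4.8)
The plan is to prove the equivalence by assembling the two halves that have essentially been carried out in the preceding subsection. For the direction $\car(K)\neq2\Rightarrow$ existence, I would point to the representation constructed above: the map $\psi\colon A_{8,\a}\to K\times K$ with $\psi(e_1)=(1,0)$ and $\psi(e_2)=(0,1/4)$, where $K\times K$ carries the associative product $(x,y)(z,t)=(xz-yt,xt+yz)$ and the involution $(x,y)^*=(x,-y)$, with respect to the product $q(a,b)=k_\a(ab+a^*b^*)+h_\a(ab^*+a^*b)$, $k_\a=\tfrac14(1-16\alpha)$, $h_\a=\tfrac14(1+16\alpha)$. The only things to check are that $q$ particularized to $K\times K$ gives $(x,y)\cdot(z,t)=(16\alpha ty+xz,0)$ and then that $\psi(e_1^2)=(1,0)=\psi(e_1)\cdot\psi(e_1)$, $\psi(e_2^2)=(\alpha,0)=\psi(e_2)\cdot\psi(e_2)$, $\psi(e_1e_2)=0=\psi(e_1)\cdot\psi(e_2)=\psi(e_2)\cdot\psi(e_1)$; faithfulness is immediate since $\psi(e_1),\psi(e_2)$ are linearly independent in $K\times K$, and nothing here needs passage to the algebraic closure.

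For the converse, $\car(K)=2\Rightarrow$ non-existence, I would argue by contradiction. If $\sigma\colon A_{8,\a}\to B$ were a faithful associative and commutative representation relative to some product $q=l\,ab+m\,ab^*+n\,a^*b+p\,a^*b^*$, $l,m,n,p\in K$, then by the universal property of $\U_q$ (resp.\ $\T_q$ when $B$ is not unital) $\sigma$ factors through $\rho\colon A_{8,\a}\to\U_q$, forcing $\rho$ to be injective and hence $\bar x\neq0$ in $\U_q$. So it suffices to show that in characteristic $2$ one always has $x\in I$, and this is precisely what the preceding case analysis does: exploiting the symmetry $x\leftrightarrow z$, $y\leftrightarrow t$ of $I$ and splitting first on $l=p$ versus $l\neq p$. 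In the case $l=p$ one derives $x\equiv z$ and, after a subsplit on $m=n$ vs.\ $m\neq n$, obtains $x\in I$; in the case $l\neq p$ one factors $I=J_1\cap J_2=J_1J_2$ with $J_1=(x+z)+I$ and $J_2=(x+z+k)+I$, $k=(l+p)^{-1}$, and shows $x\in J_1$ and $x\in J_2$ separately by further subcases on $m=n$, on $p=0$, and on whether $\lambda:=l+m+n+p$ vanishes, each closed by an explicit polynomial identity witnessing membership. Since $x\in I$ makes $\rho(e_1)=\bar x=0$, $\rho$ is not injective, and no faithful $\sigma$ can exist.

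The main obstacle is, without question, the $\car(K)=2$ non-existence argument. The trouble is not a single deep step but the sheer bookkeeping: one must partition $(l,m,n,p)\in K^4$ exhaustively and, in each branch, exhibit a concrete combination of the generators of $I$ equal to $x$. The most delicate branch is $l\neq p$, where $x$ is not directly in $I$ and one has to work separately modulo the two coprime ideals $J_1$ and $J_2$ — the $J_2$ side being the fiddliest, since it needs the substitution $z\equiv x+k$ and careful tracking of the induced relations among $x,y,t$. By contrast, the $\car(K)\neq2$ direction, once the target algebra $K\times K$ and the product $q$ are in hand, is a short verification.
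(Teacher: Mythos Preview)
Your proposal is correct and follows essentially the same approach as the paper: the explicit representation $\psi$ for $\car(K)\neq 2$ and, for $\car(K)=2$, the exhaustive case analysis on $(l,m,n,p)$ showing $x\in I$ (with the $l\neq p$ case handled via the coprime factorization $I=J_1\cap J_2$ and separate arguments modulo each). Your added remark that a faithful $\sigma$ forces the universal $\rho$ to be faithful via the factorization $\sigma=F\rho$ makes explicit what the paper leaves implicit, but otherwise the structure and the subcase splits match.
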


As a consequence of the results proved in Section~\ref{morrocotudo}, we obtain the following characterization of the existence of a faithful associative and commutative representation of a $2-$dimensional evolution algebra.
\begin{theorem}
Let $A$ be a $2$-dimensional evolution algebra over a field $K$. Then, $A$ has a faithful associative and commutative representation if and only if $\aut(A)\ne\{1\}$.
\end{theorem}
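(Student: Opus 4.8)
The plan is to derive the statement directly from the classification of two-dimensional evolution algebras together with the case-by-case work already carried out in Sections~\ref{Ivanov} and~\ref{morrocotudo}. Both conditions in play --- ``$A$ has a faithful associative and commutative representation'' and ``$\aut(A)\ne\{1\}$'' --- are invariant under isomorphism, and by the scalar-extension observation made just before the $A_{2,\alpha}$ subsection, a faithful representation of $A$ over $K$ exists as soon as one exists for $A\otimes_K F$ for some field extension $F/K$. So it suffices to run through the isomorphism classes of \cite{squares} --- the zero-product algebra $A_0$, $A_1$, $A_{2,\alpha}$, $A_{3,\alpha}$, $A_{4,\alpha}$, $A_{5,\alpha,\b}$ (distinguishing $\alpha\ne\b$ and $\alpha=\b$), $A_5$, $A_6$, $A_7$ and $A_{8,\alpha}$ --- and verify that in each one the two conditions hold, or fail, simultaneously. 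Here ``$\aut(A)$'' is understood as the automorphism group computed in Section~\ref{Ivanov}, i.e.\ the group of points of $\affaut(A)$ over an algebraic closure of $K$.

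First I would handle the ``no representation'' side. From Table~\ref{table2} and Section~\ref{Ivanov}, the classes with trivial automorphism group are exactly $A_{3,\alpha}$, $A_{4,\alpha}$, $A_{5,\alpha,\b}$ with $\alpha\ne\b$, and $A_{8,\alpha}$ when $\car(K)=2$. For precisely these, Section~\ref{morrocotudo} rules out a faithful associative and commutative representation: Theorem~\ref{paella_brasilera_quiero} for $A_{3,\alpha}$, Theorem~\ref{alleap} for $A_{4,\alpha}$, Theorem~\ref{Ipanema_girl} for $A_{5,\alpha,\b}$, and the theorem on $A_{8,\alpha}$ (which gives a faithful representation if and only if $\car(K)\ne 2$) for the last case. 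This shows that $\aut(A)=\{1\}$ forces the nonexistence of such a representation.

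Next I would handle the ``representation exists'' side, i.e.\ the remaining classes $A_0$, $A_1$, $A_{2,\alpha}$, $A_{5,\alpha,\alpha}$, $A_5$, $A_6$, $A_7$, and $A_{8,\alpha}$ with $\car(K)\ne 2$, all of which have $\aut(A)\ne\{1\}$. For $A_0$ one takes the trivial polynomial $p=0$ and any injective linear map into, say, $K[x,y]$ with the identity involution; then $\rho(uv)=0=p[\rho(u),\rho(v)]$ trivially, so $\rho$ is a faithful associative and commutative representation. For $A_1$, $A_{5,\alpha,\alpha}$, $A_5$ (both characteristics), $A_6$, $A_7$ and $A_{8,\alpha}$ with $\car(K)\ne 2$, the explicit faithful representations are the ones constructed in the corresponding subsections of Section~\ref{morrocotudo}. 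Finally, for general $A_{2,\alpha}$ one passes to $F=K(\sqrt[3]{\alpha})$, where $(A_{2,\alpha})_F\cong (A_{2,1})_F$ and $A_{2,1}$ carries the faithful representation into $\H_{2,1}$ built in Section~\ref{morrocotudo}; the scalar-extension observation then produces a faithful representation of $A_{2,\alpha}$ over $K$. Combining the two sides gives the equivalence.

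The genuine content is all in the nonexistence results of Section~\ref{morrocotudo}, whose proofs rest on the (often lengthy) elimination/Gr\"obner-type computations; the present theorem is only the bookkeeping that packages them. The step deserving the most care is aligning the meaning of ``$\aut(A)=\{1\}$'' with what is actually computed: the relevant group is the automorphism group over an algebraically closed field (equivalently, the group of geometric points of $\affaut(A)$), and this matters exactly in characteristic $2$ for $A_{8,\alpha}$, where that group collapses to the trivial group although the representing Hopf algebra is $K(\epsilon)\ne K$ --- which is precisely why the hypothesis is phrased through $\aut(A)$ rather than through $\H$ --- and, less crucially, it is why classes such as $A_7$ (whose functorial automorphism group is $\Gm$) sit on the ``nontrivial'' side irrespective of the size of $K$. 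The only other loose ends are the two classes not given their own subsection in Section~\ref{morrocotudo}, namely $A_0$ and general $A_{2,\alpha}$, dealt with above by $p=0$ and by scalar extension, respectively.
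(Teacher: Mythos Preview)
Your proposal is correct and follows essentially the same approach as the paper: both simply compile the case-by-case computations of Sections~\ref{Ivanov} and~\ref{morrocotudo} (as summarized in Table~\ref{table1}), and the paper offers no argument beyond pointing to that tabulation. Your write-up is in fact more careful than the paper's, explicitly handling the zero-product algebra $A_0$, the general $A_{2,\alpha}$ via scalar extension, and---most importantly---the interpretation of $\aut(A)$ as the group of geometric points, which is genuinely needed to make the statement literally true (e.g.\ for $A_7$ over $\mathbb{F}_2$, where $K^\times=\{1\}$ but a faithful representation exists).
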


Compiling the information on the Hopf algebra representing the affine group scheme $\affaut(A)$ in the above theorem we obtain  Table~\ref{table1}  (up to scalar extension):

\medskip 

\begin{table}[ht]
\begin{center}
\setcellgapes{2pt}
\makegapedcells
\settowidth\rotheadsize{Perfect}
\settowidth\rotheadsize{Non-Perfect}
\begin{tabular}{|C{0.4cm}|c|c|c|c|c|}
\hline 
 &  $A$ & $\aut(A) $ &   $\H$: Hopf alg. & Univ. repr. & $\T_p$ \\
  &     &         &  of $\affaut(A)$ & &\\
          \hline
                   
  & $A_1$ & $\mu_2(K)$ & $K^2$ & Faithful & $\cong\H$\\
   \cdashline{2-6}
   
  & $A_{2,\a}$ & $\mu_3(K)\rtimes\mu_2(K)$ & $KC_3\o KC_2$ & Faithful & $\cong\H$\\
   \cdashline{2-6}
   
  & $A_{3,\a}$ &  $1$  & $K$  & Not faithful & $-$\\
  \cdashline{2-6}
  
  & $A_{4,\a}$ &  $1$  & $K$  & Not faithful&$-$\\
  \cdashline{2-6}
  
  & $A_{5,\a,\b}$, \tiny ($\a\ne\b$) &  $1$  & $K$  & Not faithful &$-$\\
  \cdashline{2-6}
  
  & $A_{5,\a,\a}$ & $\m_2(K)$ & $K^2$ & Faithful & $\cong\H$\\
   
   \hline 
   \multirow[t]{-5.5}{*}{\rothead{Perfect}}
  & $A_5,\ \tiny\car=2$ & $(K,+)$ & $K[x]$ & Faithful &{\color{blue}}\\ 
  \cdashline{2-6}
  
  & $A_5,\ \tiny\car\ne 2$ & $(K,\cdot)$ & $K[x^\pm]$ & Faithful &{\color{blue}}\\
  \cdashline{2-6}
  
  & $A_6$ & $K^\times\times K$ & $K[x,y^{\pm}]$ & Faithful &\\
  \cdashline{2-6}
  
  & $A_7$ & $K^\times$ & $K[x^\pm]$ & Faithful &\\
  \cdashline{2-6}
  
  & $A_{8,\a},\ \tiny\car\ne 2$ & $\mu_2(K)$ & $K^2$ & Faithful &\\
  \cdashline{2-6}
   \multirow[t]{-4}{*}{\rothead{Non-Perfect}}
  
  & $A_{8,\a},\ \tiny\car= 2$ & $1$ & $K(\epsilon)=\frac{K[x]}{(x^2)}$ & Not faithful & $-$\\
  \cdashline{2-6}

\hline 

\end{tabular}
\caption{\small Hopf algebras and universal representations}
\label{table1}
\end{center}
\end{table}

Note that if $A$ is a finite-dimensional algebra such that the representing Hopf algebra $\H$ of $\affaut(A)$ is the ground field $K$, then $\aut(A)=1$, but in some cases, we may have $\aut(A)=1$ without having $\H=K$. Such is the case for $A_{8,\a}$ in characteristic $2$.
\begin{remark}\label{despensa}\rm 
Let $A$ be a finite-dimensional algebra over an algebraically closed field $K$ and $\H$
the representing Hopf algebra of $\affaut(A)$, which we know to be finitely generated. It is well known that for a finite-dimensional perfect evolution algebra $A$, its group of automorphisms is finite (\cite[Theorem 4.8]{Elduque2}). So $\hom(\H,K)$ is a finite group and this implies that $\H$ is a finite-dimensional $K$-algebra (this is rather standard:  taking $\H=K[x_1,\ldots,x_n]/I$ with $V(I)=\{a_1,\ldots,a_m\}$ finite and 
$a_i=(a_{i1},\ldots,a_{in})$, consider $f_1=\prod_i (x_1-a_{i1}),\ldots, f_n=\prod_i (x_n-a_{in})$. For each $j$
we have $f_j\in I(V(I))=\{g\in K[x_1,\ldots,x_n]\colon g(V(I))=0\}=\sqrt{I}$ by the Hilbert's Nullstellensatz. 
Then some power of $f_j$ is in $I$ (for any $j$), and since
each $f_j$ depends only on the variable $x_j$, we conclude that $K[x_1,\ldots,x_n]/I$ is finite-dimensional. 
If the ground fields are not algebraically closed, we can extend scalars to the algebraic closure to get the same result.
\end{remark}

Assume that $A$ is a finite-dimensional evolution algebra over an algebraically closed field $K$ with $\aut(A)=1$. Denote by $\H$ the Hopf algebra representing $\affaut(A)$. We have a dichotomy,  $\H=K$ or $\H\ne K$.

In the second case, we have that
$\hom_{\alg_K}(\H,K)$ has only one element: the augmentation $\epsilon\colon\H\to K$, because  $\affaut(A)(K)=\aut(A)=1$.
  For any maximal ideal $\mi\triangleleft\H$, we have $\H=\mi\oplus K 1$ and a homomorphism $\H\to K$, just the projection. Since there is only one such homomorphism, there is only one maximal ideal, so $\H$ is a local algebra. 
  
We know $\H = K[x_1,x_2,\ldots,x_n]/I$ with $n\ge 1$ and $I$ an ideal. A maximal ideal $\mi$ in $\H$ comes from a maximal ideal $\mathfrak{M}$ in $K[x_1,x_2,\ldots,x_n]$ such that $I \subseteq \mathfrak{M}$. Since we have taken $K$ to be an algebraically closed field, we have that $\mathfrak{M} = (x_1-a_1,x_2-a_2,\ldots,x_n-a_n)$, with $a_i \in K$ for $i = 1,\ldots,n$. But there is a standard automorphism $K[x_1,\ldots,x_n] \to K[x_1,\ldots,x_n]$ such that $x_i \mapsto x_i+a_i$ for every $i  = 1,\ldots,n$. Hence, we can consider $\mathfrak{M} = (x_1,x_2,\ldots, x_n)$ up to isomorphism. In addition, the algebra $\H$ is finite-dimensional (see Remark \ref{despensa}), and so it is artinian, which implies that ${\rm rad}(\H) = \mi $ is nilpotent. Then \cite[11.4, {Theorem}]{Waterhouse} implies that $\car(K)=p$, a prime. Since $\H$ is in particular finitely generated, then $\affaut(A)$ is a finite group scheme in the terminology of \cite[2.4]{Waterhouse}. Furthermore, this group scheme is connected (see \cite[6.6]{Waterhouse}) because $\H$ is local and therefore there are no idempotents in $\H$ others than $0$ and $1$ (see \cite[5.5, Corollary]{Waterhouse}). So, we can apply \cite[14.4]{Waterhouse} to get that $\H\cong K[x_1,\ldots,x_n]/(x_1^{p^{e_1}},\ldots, x_n^{p^{e_n}})$. Conversely, if we have $\H$ as before, then it is easy to realize that $\hom(\H,K)$ has cardinal $1$. 
So we claim:
\begin{proposition}\label{singleton}
Let $A$ be a finitely generated algebra in $\alg_K$ (with $K$ algebraically closed). The
affine scheme $h^A$ has only one rational point, that is, 
$\hom_{\alg_K}(A,K)$ is a singleton if and only if $\H=K$, or $\car(K)=p$ prime and $A\cong K[x_1,\ldots,x_n]/(x_1^{p^{e_1}},\ldots, x_n^{p^{e_n}})$, for some exponents $e_i$. 
\end{proposition}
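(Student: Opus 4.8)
The plan is to recognise that this proposition is nothing but the dichotomy worked out in the paragraphs immediately preceding it, repackaged as an ``if and only if''; so I would split the proof into the two implications and supply the two easy endpoints, namely the case $\H=K$ and the converse direction. For the ``if'' direction I would argue directly: if $\H=K$ there is a single $K$-algebra homomorphism $K\to K$, the identity; and if $\car(K)=p$ is prime and $A\cong K[x_1,\dots,x_n]/(x_1^{p^{e_1}},\dots,x_n^{p^{e_n}})$, then every $\varphi\in\hom_{\alg_K}(A,K)$ must send the class of $x_i$ to an element $a_i\in K$ with $a_i^{p^{e_i}}=0$, hence $a_i=0$ for all $i$; so $\varphi$ is forced to be the augmentation, and $h^A$ has exactly one rational point.

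For the ``only if'' direction I would follow the argument already sketched before the statement. Write $A=K[x_1,\dots,x_n]/I$. Since $K$ is algebraically closed, Hilbert's Nullstellensatz identifies the rational points of $h^A$ with $V(I)\subseteq K^n$, so the hypothesis gives $V(I)=\{a\}$ for a single point $a=(a_1,\dots,a_n)$; composing with the translation automorphism $x_i\mapsto x_i+a_i$ of $K[x_1,\dots,x_n]$ we may assume $a=0$, so $I\subseteq\mathfrak M:=(x_1,\dots,x_n)$ and $\mi:=\mathfrak M/I$ is the unique maximal ideal of $A$, whence $A$ is local with $A/\mi=K$. Using that a finitely generated $K$-algebra is a Jacobson ring, every prime of $A$ is an intersection of maximal ideals, so $\mi$ is the only prime and equals $\operatorname{nil}(A)$; as $A$ is Noetherian this nilradical is nilpotent, and since each $\mi^{j}/\mi^{j+1}$ is a finite-dimensional $K$-vector space, $A$ is finite-dimensional, exactly as in Remark~\ref{despensa}. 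Now I would split: if $\mi=0$ then $A=K$, the first alternative; otherwise $\mi\neq0$ is a nonzero nilpotent ideal, so $A$ is the coordinate algebra of a finite affine group scheme with nilpotent augmentation ideal, and \cite[11.4, Theorem]{Waterhouse} forces $\car(K)=p$ prime. This group scheme is connected, because $A$ being local has no idempotents other than $0$ and $1$ (\cite[5.5, Corollary]{Waterhouse}); applying \cite[14.4]{Waterhouse} to a finite connected group scheme over the (algebraically closed, hence perfect) field $K$ yields $A\cong K[x_1,\dots,x_n]/(x_1^{p^{e_1}},\dots,x_n^{p^{e_n}})$ for suitable exponents $e_i$, which is the second alternative.

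The delicate point is not computational but a matter of hypotheses: the two structural inputs, \cite[11.4, Theorem]{Waterhouse} (no nonzero nilpotents in characteristic zero) and \cite[14.4]{Waterhouse} (the shape of the coordinate algebra of a finite connected group scheme), are statements about affine group schemes, so they use that $A$ carries its Hopf-algebra structure; the earlier reductions---$A$ local, finite-dimensional, with nilpotent augmentation ideal, and connected---are precisely what is needed to put us in a position to apply them. Everything else is standard commutative algebra: the Nullstellensatz, the Jacobson property of finitely generated $K$-algebras, and nilpotence of the nilradical of a Noetherian ring.
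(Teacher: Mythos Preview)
Your proof is correct and follows essentially the same route as the paper: the argument in the paragraphs immediately preceding the proposition \emph{is} the paper's proof, and you have reproduced its skeleton (Nullstellensatz to reduce to a local finite-dimensional algebra with nilpotent maximal ideal, then Waterhouse 11.4 for the characteristic and 14.4 for the shape). Your route to finite-dimensionality via the Jacobson property is a minor variant of the paper's appeal to Remark~\ref{despensa}, and you correctly flag the point the paper leaves implicit---that the invocations of \cite[11.4]{Waterhouse} and \cite[14.4]{Waterhouse} require the Hopf-algebra structure on $A$, a hypothesis absent from the proposition as stated (indeed $K[x]/(x^2)$ in characteristic zero shows the statement fails for bare commutative algebras).
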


As a consequence of the information summarized in Table \ref{table1}, we see that 
\begin{theorem}
For a $2$-dimensional evolution algebra $A$ over $K$, we have $\aut(A)=\{1\}$ if and only if there are no faithful $p$-algebra for $A$.
If $\H$ is the representing Hopf algebra of $\affaut(A)$, then there is a faithful associative and commutative representation for $A$ if and only if $\text{char}(K)\neq2$ and  $\H\not\cong K$, or $\text{char}(K)=2$,  $\H\not\cong K(\epsilon)$ and $\H\not\cong K$. Furthermore, if $A$ is perfect and has a faithful tight $p$-algebra, then $\T_p\cong \H$ (as algebras)
\end{theorem}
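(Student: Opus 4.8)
The plan is to obtain all three assertions from the case-by-case analysis already carried out in Sections~\ref{Ivanov} and \ref{morrocotudo} and compiled in Table~\ref{table1}: one runs over the isomorphism types of $2$-dimensional evolution $K$-algebras --- the zero-product algebra $A_0$ together with $A_1$, $A_{2,\a}$, $A_{3,\a}$, $A_{4,\a}$, $A_{5,\a,\b}$ with $\a\ne\b$, $A_{5,\a,\a}$, $A_5$, $A_6$, $A_7$ and $A_{8,\a}$ --- and, for each type, reads off its automorphism group, its Hopf algebra $\H$, and whether it admits a faithful $p$-algebra. First I would note that all of this is insensitive to field extension: by the base-change argument used throughout Section~\ref{morrocotudo}, a representation $\rho\colon A\to V$ is faithful iff the extended representation $\rho_F\colon A_F\to V_F$ is, and $\U_p$, $\T_p$ and $\H$ commute with base change; so I may enlarge $K$ whenever convenient, e.g.\ to arrange $\sqrt[3]{\a}\in K$ in the $A_{2,\a}$ case.

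For the first equivalence, recall from Section~\ref{Ivanov} that $\aut(A)=\{1\}$ holds exactly when $A$ is one of $A_{3,\a}$, $A_{4,\a}$, $A_{5,\a,\b}$ ($\a\ne\b$), or when $A\cong A_{8,\a}$ with $\car(K)=2$ (in that last case $\affaut(A_{8,\a})\cong\Mu_2$ is a nontrivial group scheme but $\Mu_2(K)=\{1\}$). For each of these, the matching result of Section~\ref{morrocotudo} --- Theorems~\ref{paella_brasilera_quiero}, \ref{alleap}, \ref{Ipanema_girl} for the first three, and the theorem on $A_{8,\a}$ for the last --- asserts that $A$ has no faithful associative and commutative representation, hence no faithful $p$-algebra. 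For each of the remaining types ($A_0$, $A_1$, $A_{2,\a}$, $A_{5,\a,\a}$, $A_5$, $A_6$, $A_7$, and $A_{8,\a}$ in characteristic $\ne2$) one has $\aut(A)\ne\{1\}$, and the matching subsection of Section~\ref{morrocotudo} exhibits a product $p$ and a faithful associative and commutative representation, which by the universal property of $\T_p$ gives a faithful $\rho\colon A\to\T_p$; for $A_0$ this is immediate, e.g.\ taking $p(a,b)=ab$. Hence $\aut(A)=\{1\}$ if and only if no faithful $p$-algebra exists.

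For the second equivalence, I would record from Table~\ref{table2} that, among these algebras, $\H\cong K$ occurs exactly for $A_{3,\a}$, $A_{4,\a}$, $A_{5,\a,\b}$ ($\a\ne\b$), and that $\H$ is $2$-dimensional and non-reduced --- that is, $\H\cong K(\epsilon)$ --- exactly for $A_{8,\a}$ with $\car(K)=2$ (the $2$-dimensional Hopf algebras occurring being $K^2$, which is reduced, and $K(\epsilon)$, which is not, so in particular $\H\cong K(\epsilon)$ never happens in characteristic $\ne2$). Combining this with the first equivalence, $A$ has no faithful $p$-algebra if and only if $\H\cong K$ or $\H\cong K(\epsilon)$; equivalently, $A$ has a faithful associative and commutative representation iff $\H\not\cong K$ and $\H\not\cong K(\epsilon)$, which, since the latter condition is automatic unless $\car(K)=2$, is precisely the stated dichotomy --- and the characteristic-$2$ clause is genuinely needed, as in characteristic $\ne2$ the algebra $A_{8,\a}$ has $\H\cong K^2\not\cong K$ and does admit a faithful representation.

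Finally, for the statement on perfect algebras, I would use the structure-matrix computation behind the Perfect/Non-Perfect split of Table~\ref{table1}: a perfect $2$-dimensional evolution algebra is one of $A_1$, $A_{2,\a}$, $A_{3,\a}$, $A_{4,\a}$, $A_{5,\a,\b}$, $A_{5,\a,\a}$. For $A_{3,\a}$, $A_{4,\a}$ and $A_{5,\a,\b}$ ($\a\ne\b$) there is no faithful $p$-algebra at all, so the hypothesis is vacuous. For each of $A_1$, $A_{2,\a}$ and $A_{5,\a,\a}$, the corresponding subsection of Section~\ref{morrocotudo} exhibits a product $p$ with $\rho\colon A\to\T_p$ faithful and computes $\T_p$ explicitly, obtaining $\T_p\cong A_1\cong\H_1$, $\T_p\cong\H_{2,1}\cong\H_{2,\a}$ (after the harmless extension making $\a$ a cube), and $\T_p\cong K^2\cong\H_{5,\a,\a}$ respectively; in every case $\T_p\cong\H$ as algebras. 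The actual content of the theorem is thus entirely in Section~\ref{morrocotudo} --- above all the elimination-theoretic non-faithfulness arguments for $A_{3,\a}$, $A_{4,\a}$, $A_{5,\a,\b}$ and $A_{8,\a}$ in characteristic $2$, and the explicit computation of $\T_p$ for $A_1$, $A_{2,\a}$, $A_{5,\a,\a}$; the present proof is merely the bookkeeping that stitches these together, the two points to keep in mind being that ``$A$ has a faithful tight $p$-algebra'' is read with $p$ the specific product exhibited in those subsections, and that the scalar-extension reductions leave the algebra isomorphism type of $\T_p$ and of $\H$ unchanged.
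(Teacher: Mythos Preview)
Your proposal is correct and follows exactly the approach of the paper: the theorem is stated there as a direct consequence of the case-by-case computations compiled in Table~\ref{table1}, and your argument is precisely the detailed reading of that table, checking for each isomorphism type whether $\aut(A)$ is trivial, what $\H$ is, and whether a faithful $p$-algebra was constructed or ruled out. Your handling of the scalar-extension caveat (especially for $A_{2,\a}$) and your explicit remark that, for the last assertion, $p$ is the specific product exhibited in each subsection, are both apt and in fact make explicit points the paper leaves implicit.
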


\section{Consequences for higher dimensional algebras}\label{banco}

Following \cite{shestakov}, we introduce a minor variation  on the notion of associative representation in \cite{shestakov} to produce the concept of associative and commutative representation of an evolution algebra. Fix $A$ a finite-dimensional evolution $K$-algebra, with $\dim(A)=n$. Take a polynomial $p$ in the free associative algebra with involution $\hbox{Ass}\langle a,b,a^*,b^*\rangle$ such that $p$ is a $K$-linear combination 
of $\{ab,ab^*,a^*b,a^*b^*\}$. Consider the polynomial algebra $$R:=K[x_1,\ldots,x_n,x_1^*,\ldots,x_n^*]$$ with the involution such that $x_i\mapsto x_i^*$.
If $B=\{e_i\}_{i=1}^n$ is a natural basis of $A$ with $e_i^2=\sum_j \w_{ji}  e_j$, $\w_{ji}\in K$ (the structure constants relative to $B$), then we can define  the
$K$-linear map $\rho\colon A\to\U_p$ such that 
$\rho(e_i)=x_i$ and $\U_p:=R/I$ where $I$ is the $*$-ideal of $R$ $*$-generated by the elements 
$p(x_i,x_j)$ with $i\ne j$ and $p(x_i,x_i)-\sum_j \w_{ji} x_j$. We will call $\U_p$ the universal associative and commutative representation of $A$ relative to the specified $p$ and $B$.
Thus, $\rho$ is a linear map $A\to\U_p$ where $\U_p$ is an (associative and commutative) algebra with involution, and $\rho$ satisfies 
$\rho(ab)=p(\rho(a),\rho(b))$ for any $a,b\in A$.
Furthermore, if 
$\tau\colon A\to X$ is any other linear map to an associative and commutative algebra with involution $(X,*)$, satisfying $\tau(ab)=p(\tau(a),\tau(b))$, then there is a unique $*$-homomorphism of (associative) algebras $F\colon\U_p\to X$ such that $F\rho=\tau$. Any linear map $\tau\colon A\to X$, where $(X,*)$ is an associative, commutative algebra with involution satisfying $\tau(ab)=p(\tau(a),\tau(b))$ is called an associative and commutative representation of $A$ for the product given by $p$.

\medskip

For $B$ an associative algebra with involution $*$, we will denote by $\hbox{\rm Sym}(B,*)$ the set of elements $b\in B$ such that $b^*=b$.
\begin{proposition}\label{ventilador}
    Let $A$ be a finite-dimensional perfect evolution algebra and $\rho\colon A\to\U_p$ its universal associative and commutative representation relative to $p=\l_0 ab+\l_1 ab^*+\l_2 a^*b+\l_3 a^*b^*$. If $\rho(A)\subset\hbox{\rm Sym}(\U_p,*)$, then the kernel $\ker(\rho)$ is an ideal such that $A/\ker(\rho)$ is associative. In particular, if $\rho$ is faithful, $A$ is an associative algebra and $\aut(A)\ne 1$.  
\end{proposition}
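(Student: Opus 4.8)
The plan is to exploit the symmetry hypothesis to turn $\rho$ into an ordinary algebra homomorphism up to a scalar. Set $\l:=\sum_{i=0}^{3}\l_i$. Since $\U_p$ is commutative and, by hypothesis, $\rho(a)^{*}=\rho(a)$ for every $a\in A$, the four monomials $ab$, $ab^{*}$, $a^{*}b$, $a^{*}b^{*}$ appearing in $p$ all evaluate to $\rho(a)\rho(b)$ at $(\rho(a),\rho(b))$. Hence $\rho(ab)=p(\rho(a),\rho(b))=\l\,\rho(a)\rho(b)$ for all $a,b\in A$; this one identity does all the work.

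First I would deduce that $\ker(\rho)$ is a two-sided ideal: if $\rho(k)=0$ then $\rho(ka)=\l\,\rho(k)\rho(a)=0$ and $\rho(ak)=0$ likewise. When $\l\ne0$, the map $\l\rho\colon A\to\U_p$ is then an honest homomorphism of (a priori non-associative) algebras with the same kernel as $\rho$, so it descends to an \emph{injective} algebra homomorphism $A/\ker(\rho)\hookrightarrow\U_p$; thus $A/\ker(\rho)$ is isomorphic to a subalgebra of the associative, commutative algebra $\U_p$ and is therefore associative (in fact commutative). The degenerate case $\l=0$ is harmless: then $\rho$ annihilates $A^{2}$, and since $A$ is perfect $A^{2}=A$, so $\rho=0$, $\ker(\rho)=A$ and $A/\ker(\rho)=0$ is trivially associative. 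This settles the first assertion.

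For the ``in particular'' part, assume $\rho$ faithful; then $\ker(\rho)=0$, so $A$ is itself associative and commutative (and necessarily $\l\ne0$, since $\l=0$ would give $A=0$). I would then recover the structure of $A$: fix a natural basis $\{e_i\}_{i=1}^{n}$ with $e_i^{2}=\sum_{j}\w_{ji}e_j$; perfectness means the structure matrix $(\w_{ji})$ is invertible. Applying associativity to $(e_i^{2})e_j=e_i(e_ie_j)$ for $i\ne j$ and using $e_i(e_ie_j)=e_i\cdot0=0$ gives $\w_{ji}\sum_{\ell}\w_{\ell j}e_\ell=0$, i.e. $\w_{ji}\w_{\ell j}=0$ for all $\ell$; since the $j$-th column of the invertible matrix is nonzero this forces $\w_{ji}=0$ whenever $i\ne j$. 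So $(\w_{ji})$ is diagonal with nonzero entries, and rescaling the $e_i$ turns it into the identity: $A\cong K^{n}$ with componentwise product. Its automorphism group contains the symmetric group $S_n$ permuting the $n$ coordinate idempotents, so $\aut(A)\ne1$ as soon as $n=\dim A\ge2$ (the case $n=1$ being just $A\cong K$).

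I expect no genuine obstacle here; the real content is the collapse of $p$ in the first paragraph. The points that need a little care are purely formal: that the kernel of a non-associative homomorphism is an ideal, that an injective homomorphism into an associative algebra forces the source to be associative, and the exclusion or absorption of the degenerate parameters ($\l=0$, or $\dim A\le1$). The concluding structural fact — a perfect associative evolution algebra is, after rescaling, a direct product of copies of $K$ — is standard and is the only mildly computational step.
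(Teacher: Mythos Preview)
Your proof is correct and follows essentially the same approach as the paper: both reduce to the identity $\rho(ab)=\l\,\rho(a)\rho(b)$ via the symmetry hypothesis, split on $\l=0$ versus $\l\ne 0$, and conclude with the fact that a perfect associative evolution algebra is $K^n$. Your presentation is slightly more careful (you make the ideal property of $\ker(\rho)$ explicit, use the $\l\rho$ homomorphism trick rather than an associator computation, and flag the degenerate case $n\le 1$, which the paper silently excludes), but the substance is the same.
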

\begin{proof}
Let $\{e_i\}_1^n$ be the natural basis chosen for the computation of $\U_p$. 
We know that $\U_p=R/I$, with $R=K[x_1,\ldots,x_n,x_1^*,\ldots,x_n^*]$ and
 $\rho(e_i)=\bar x_i$ for any $i$. 
The ideal $I$ is $*$-generated by the polynomials $p(x_i,x_i)-\sum_j \omega_{ji} x_j$ 
and $p(x_i,x_j)$ ($i\ne j$). Define $\l=\sum_0^3\l_j$. 
\begin{enumerate}
\item If $\l=0$ then $0=\sum_j\omega_{ji} \bar x_j$, which implies $\bar x_j=0$ for any $j$, given that the matrix $(\omega_{ji})$ is invertible. In this case, $\rho=0$, and we are done.
\item If $\l\ne 0$, we have $\rho(xy)=\l\rho(x)\rho(y)$ for any $x,y\in A$. Then, there is a vector space isomorphism $\hat\rho\colon A/\ker(\rho)\to\rho(A)$ given by $\hat\rho(\bar x)=\rho(x)$.
But $\hat\rho(\bar x\bar y)=\rho(xy)=\l\rho(x)\rho(y)$, whence
$$\hat\rho((\bar x\bar y)\bar z)=\hat\rho(\overline{xy}\bar z)=\l\rho(xy)\rho(z)=\l^2\rho(x)\rho(y)\rho(z)
$$
and similarly $\hat\rho(\bar x(\bar y\bar z))=\l^2\rho(x)\rho(y)\rho(z)$. So $(\bar x\bar y)\bar z-\bar x(\bar y\bar z)\in\ker(\hat\rho)=0$.
\end{enumerate}
In any case, $A/\ker(\rho)$ is associative.  If $\rho$ turns out to
be faithful, then $A$ is associative and perfect, so $A\cong K^n$ for a certain $n$. In this case, its group of automorphisms is not trivial.
\end{proof}
As a corollary of Proposition \ref{ventilador} we obtain the following result.

\begin{corollary}
 If $A$ is a perfect evolution algebra of finite dimension with a faithful representation $\rho$ and $A$ is not associative, then 
$\rho(A)\not\subset\hbox{\rm Sym}(\U_p,*)$.    
\end{corollary}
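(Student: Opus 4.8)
The plan is to obtain this as the contrapositive of Proposition~\ref{ventilador}. I would argue by contradiction: assume that $\rho(A)\subset\operatorname{Sym}(\U_p,*)$. By the construction recalled at the start of Section~\ref{banco}, the universal associative and commutative representation $\rho\colon A\to\U_p$ is taken relative to a product $p$ that is a $K$-linear combination of $\{ab,ab^*,a^*b,a^*b^*\}$, i.e.\ $p=\l_0 ab+\l_1 ab^*+\l_2 a^*b+\l_3 a^*b^*$ for suitable $\l_i\in K$, so this is exactly the setting of Proposition~\ref{ventilador}.

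Next I would apply Proposition~\ref{ventilador} itself: since $A$ is a finite-dimensional perfect evolution algebra and, under our assumption, $\rho(A)\subset\operatorname{Sym}(\U_p,*)$, that proposition yields that $\ker(\rho)$ is an ideal of $A$ with $A/\ker(\rho)$ associative. Invoking the hypothesis that $\rho$ is faithful gives $\ker(\rho)=\{0\}$, hence $A\cong A/\ker(\rho)$ is associative, contradicting the assumption that $A$ is not associative. This contradiction shows that $\rho(A)\not\subset\operatorname{Sym}(\U_p,*)$, as claimed.

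There is essentially no obstacle here, since the statement is a formal consequence of the preceding proposition; the only point requiring a moment's care is verifying that the $\U_p$ in the statement is built from a product $p$ of the admissible shape, so that the hypotheses of Proposition~\ref{ventilador} genuinely apply — and this is immediate from the definition of associative and commutative representation used throughout Section~\ref{banco}.
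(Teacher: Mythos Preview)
Your proposal is correct and matches the paper's approach exactly: the paper states this corollary immediately after Proposition~\ref{ventilador} with no separate proof, treating it as the direct contrapositive of the final sentence of that proposition (``In particular, if $\rho$ is faithful, $A$ is an associative algebra''). Your argument spells out precisely this implication.
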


Let $\sigma\colon E\to (A,*)$ be a representation of an evolution algebra $E$ in the associative algebra with involution $(A,*)$, relative to a product $p=\l_0 ab+\l_1 ab^*+\l_2 a^*b+\l_3 a^*b^*$ in the free associative algebra with involution $K[a,b,a^*,b^*]$ (the elements $\l_i\in K$). Then, for any $f\in\aut_K(E)$, there is another representation $\sigma f\colon E\to A$ relative to the same $p$:
$$\sigma f(xy)=\sigma(f(x)f(y))=p[\sigma(f(x)),\sigma(f(y))].$$
Thus, the set $\rep_p(E,A)$ of all linear maps $\sigma\colon E\to A$ such that $\sigma(xy)=p[\s(x),\s(y)]$ for $x,y\in E$, is an $\aut(E)$-set via the action $\aut(E)\times\rep_p(E,A)\to\rep_p(E,A)$ such that 
$f\cdot\sigma=\sigma f^{-1}$.
It is well known that the orbit $[\sigma]$ of $\sigma$ under the action of $\aut(E)$ is isomorphic as $\aut(E)$-set to $\aut(E)/\aut(E)_\sigma$, where $\aut(E)_\sigma$ is the isotropy group of $\sigma$, that is, the stabilizer of $\sigma$, which is the subgroup $\aut(E)_\sigma:=\{f\in\aut(E)\colon f\cdot\sigma=\sigma\}$.  
If $\sigma$ is faithful, its isotropy group is trivial, so the orbit of $\sigma$ is isomorphic as an $\aut(E)$-set to $\aut(E)$. Then,  $\left \vert[\sigma] \right \vert=\vert\aut(E)\vert$, meaning that there are as many faithful representations as the cardinal of $\aut(E)$. 
Assume now that $\s(E)\subset\hbox{Sym}(A,*)$, that is, $\s(x)^*=\s(x)$ for any $x\in E$. Let $\l=\sum_{i=0}^3\l_i$ (recall that
$p=\l_0 ab+\l_1 ab^*+\l_2 a^*b+\l_3 a^*b^*$).

We know that $\s(xy)=p[\s(x),\s(y)]=\l \s(x)\s(y)$ for any $x,y\in E$.
When $\l=0$ we get 
$\s(E^2)=0$. So, if the
representation $\s$ is faithful we get that $E^2=0$ and hence $E$ is a trivial evolution algebra (a zero-product algebra). If $\l\ne 0$
then for any $x,y,z\in E$ we have 
$$\s((xy)z)=\l\s(xy)\s(z)=\l^2\s(x)\s(y)\s(z)=\l\s(x)\s(yz)=\s(x(yz)).$$
The associator of any three elements is in $\ker(\s)$ so when
$\s$ is faithful we get that $E$ is associative. The discussion above allow us to state the following result.

\begin{proposition}\label{queremoschurros}
If $E$ is an evolution algebra which is not associative and 
$\s\colon E\to (A,*)$ is a faithful (associative and commutative) representation, relative to $p$, then $\s(E)\not\subset\hbox{\rm Sym}(A,*)$. Furthermore, $\tau\colon E\to (A,*)$ defined as 
$\tau(x):=\s(x)^*$ is also a (faithful) representation and 
$\tau\ne\s$.
\end{proposition}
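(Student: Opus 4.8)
The plan is to exploit the computation already carried out in the two paragraphs preceding the statement. For the first assertion I would argue by contradiction: assume $\s(E)\subset\hbox{\rm Sym}(A,*)$ and set $\l=\sum_{i=0}^{3}\l_i$. Then $\s(xy)=p[\s(x),\s(y)]=\l\,\s(x)\s(y)$ for all $x,y\in E$. If $\l=0$ this gives $\s(E^2)=0$, and faithfulness of $\s$ forces $E^2=0$, so $E$ is the zero-product algebra, which is associative — contradicting the hypothesis that $E$ is not associative. If $\l\ne 0$, then using commutativity and associativity of $A$ one computes, for any $x,y,z\in E$, that $\s((xy)z)=\l^2\s(x)\s(y)\s(z)=\s(x(yz))$, so the associator $(xy)z-x(yz)$ lies in $\ker\s=0$; again $E$ is associative, a contradiction. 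Hence $\s(E)\not\subset\hbox{\rm Sym}(A,*)$.

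For the second assertion I would first record the elementary fact that, since $A$ is commutative and associative, the involution $*$ is actually an algebra \emph{automorphism} of $A$. Consequently, applying $*$ to $p[u,v]=\l_0uv+\l_1uv^*+\l_2u^*v+\l_3u^*v^*$, using $u^{**}=u$, $v^{**}=v$ and commutativity to suppress any reordering of factors, one obtains the identity $p[u,v]^*=p[u^*,v^*]$ for all $u,v\in A$ (the coefficients $\l_i\in K$ are fixed by $*$). Taking $u=\s(x)$, $v=\s(y)$ and recalling $\tau(x):=\s(x)^*$, this yields
\[
\tau(xy)=\s(xy)^*=p[\s(x),\s(y)]^*=p[\s(x)^*,\s(y)^*]=p[\tau(x),\tau(y)],
\]
so $\tau$ is a linear map $E\to(A,*)$ satisfying the defining relation of a representation relative to the \emph{same} $p$. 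Since $*$ is bijective and $K$-linear, $\tau$ is faithful exactly when $\s$ is. Finally, $\tau=\s$ would say $\s(x)^*=\s(x)$ for all $x\in E$, i.e. $\s(E)\subset\hbox{\rm Sym}(A,*)$, contradicting the first part; therefore $\tau\ne\s$.

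I do not expect a genuine obstacle here: the first assertion is a repackaging of the argument immediately preceding the proposition, and the second is essentially one line once the identity $p[u,v]^*=p[u^*,v^*]$ is established. The only subtlety worth flagging is precisely that identity — one must use that the commutativity of $A$ converts the anti-automorphism $*$ into an honest automorphism, so that $*$ distributes over each monomial of $p$ without permuting its factors.
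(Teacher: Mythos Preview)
Your proposal is correct and follows essentially the same approach as the paper: the first assertion is precisely the discussion the paper gives in the paragraphs immediately preceding the proposition (the $\l=0$ vs.\ $\l\ne 0$ split and the associator computation), and the second assertion, which the paper states without further argument, you fill in correctly via the identity $p[u,v]^*=p[u^*,v^*]$ (valid since commutativity of $A$ makes $*$ an automorphism).
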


A fact related to the information in Table~\eqref{table1} is the following proposition.

\begin{proposition}
  Let $A$ be an evolution $K$-algebra with faithful universal associative and commutative representation $\rho\colon A\to\U_p$
  and assume that $\rho(A)$ is $*$-closed, that is, $\rho(A)^*=\rho(A)$. Then $\aut(A)\ne 1$  or $A$ is associative. 
\end{proposition}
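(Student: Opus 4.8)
The plan is to establish the logically equivalent statement: if $A$ is \emph{not} associative, then $\aut(A)\ne1$. So assume from now on that $A$ is non-associative. The first step is to invoke Proposition~\ref{queremoschurros} with the evolution algebra being $A$ and the representation being $\rho$: since $\rho\colon A\to\U_p$ is a faithful (associative and commutative) representation relative to $p$ and $A$ is not associative, the map $\tau\colon A\to\U_p$, $\tau(x):=\rho(x)^*$, is again a faithful representation relative to $p$, and crucially $\tau\ne\rho$. Thus I would have two \emph{distinct} faithful representations of $A$ inside the same algebra $\U_p$.

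The second step uses the hypothesis that $\rho(A)$ is $*$-closed. Since $\rho$ is injective, it is a linear isomorphism onto $\rho(A)$; write $\rho^{-1}\colon\rho(A)\to A$ for the inverse linear map. Now $\tau(A)=\{\rho(x)^*:x\in A\}=\rho(A)^*=\rho(A)$, so the composite $f:=\rho^{-1}\circ\tau\colon A\to A$ is defined, and it is a linear bijection, being the composite of the linear bijections $\tau\colon A\to\rho(A)$ and $\rho^{-1}\colon\rho(A)\to A$. Note $\rho\circ f=\tau$ by construction, and $f\ne\mathrm{id}_A$, since $f=\mathrm{id}_A$ would force $\tau=\rho$.

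The third and final step is to check that $f$ is an algebra homomorphism, so that $f\in\aut(A)\setminus\{\mathrm{id}_A\}$ and therefore $\aut(A)\ne1$. For $x,y\in A$, chaining the representation identity for $\tau$, the relation $\rho\circ f=\tau$, and the representation identity for $\rho$ gives
\[
\rho\bigl(f(xy)\bigr)=\tau(xy)=p\bigl[\tau(x),\tau(y)\bigr]=p\bigl[\rho(f(x)),\rho(f(y))\bigr]=\rho\bigl(f(x)f(y)\bigr),
\]
and the injectivity of $\rho$ then yields $f(xy)=f(x)f(y)$. I do not anticipate a real obstacle: the computation is a chain of substitutions, and the only conceptual point is recognizing that the $*$-closedness of $\rho(A)$ is precisely what guarantees that $\tau$ lands back inside $\rho(A)$, so that $f=\rho^{-1}\circ\tau$ is a well-defined self-map of $A$, while faithfulness of $\rho$ is what lets us invert it and cancel it in the last display. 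Equivalently, in the language recalled just before Proposition~\ref{queremoschurros}, one sees a posteriori that $\tau$ lies in the $\aut(A)$-orbit of $\rho$ and that $f$ is the automorphism carrying $\rho$ to $\tau$.
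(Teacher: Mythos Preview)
Your proof is correct and follows essentially the same route as the paper's: the paper defines a map $\sharp\colon A\to A$ by $\rho(x^\sharp)=\rho(x)^*$ (your $f$), checks that $(xy)^\sharp=x^\sharp y^\sharp$ and that $\sharp^2=\mathrm{id}$, and then observes that if $\sharp=\mathrm{id}$ one gets associativity via Proposition~\ref{queremoschurros}. The only cosmetic difference is that the paper records the extra fact that $\sharp$ is an involution (which gives bijectivity for free), whereas you obtain bijectivity directly from the composition of bijections; and you organize the argument as a contrapositive.
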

\begin{proof}
   Under the hypothesis of the proposition, there is a unique linear map $\sharp \colon A\to A$ such that $\rho(x^\sharp)=\rho(x)^*$ for any $x\in A$. Then we
   have $$\rho(x^{\sharp\sharp})=\rho(x^\sharp)^*=\rho(x)^{**}=\rho(x),$$ implying that
   $x^{\sharp\sharp}=x$ for any $x$. Similarly, $(xy)^\sharp=x^\sharp y^\sharp$ for any $x$ and $y$. If $\sharp$ is the identity, then $\rho(x)^*=\rho(x)$ and applying Proposition \ref{queremoschurros} we get that $A$ is associative. 
   
   \end{proof}
\medskip

 We have seen in the precedent paragraphs that in the case of a $2$-dimensional perfect evolution algebra $A$, the $p$-algebra $\T_p$ agrees with the representing Hopf algebra $\H$ of $\affaut(A)$. So it seems interesting to study those evolution algebras, in general, such that 
$\T_p\cong\H$.

Assume that for a perfect, finite-dimensional evolution algebra $E$ we have $\T_p\cong \H$, where $\H$ is the Hopf algebra of $\affaut(E)$. Suppose also that $E$ is not associative. Then, we have two possibilities:
\begin{enumerate}
    \item If $*=1_{\T_p}$, then Proposition \ref{ventilador} implies that $\rho\colon E\to\T_p$ is not faithful but the quotient $E/\ker(\rho)$ is associative. Note that $\ker(\rho)\ne E$, since on the contrary we would have $\T_p=0$, which is impossible as $\T_p=\H\ni 1$.
    \item Otherwise, $*\in\aut(\T_p)$ and $*\ne 1_{\T_p}$, so 
    $\aut(\T_p)\cong\aut(\H)\ne 1$. Consequently, $\H\ne K$ so we have $\aut(E)\ne 1$ or $\H$ is of the form described in Proposition \ref{singleton}. Thus, $\H\cong\T_p$ is a local algebra different from $K$ and the characteristic of $K$ is prime.
\end{enumerate}
Summarizing this paragraph, we have:

\begin{proposition}\label{bodorrio}
    Let $E$ be a finite-dimensional perfect evolution $K$-algebra with $\H=\T_p$. Denote by  $*$ the canonical involution of $\T_p$.  
    \begin{enumerate}
        \item If $*=1_{\T_p}$, then the universal $\rho\colon E\to\T_p$ is not faithful, $\ker(\rho)\ne E$  and the quotient $E/\ker(\rho)$ is an associative algebra. 
        \item If $*\ne 1_{\T_p}$ then $\H\not\cong K$ and hence:
        \begin{enumerate}
            \item Either $\aut(E)\ne 1$, or
            \item $\car(K)$ is prime and $\H$ is of the form described in  Proposition \ref{singleton}.
\end{enumerate}
    \end{enumerate}
 In particular, if $E$ is a simple evolution algebra which is not associative, then either
(i) $\aut(E)\ne 1$ or (ii) $\H$ is of the form described in Proposition~\ref{singleton}, with $K$ of prime characteristic.
So, in characteristic zero, simple nonassociative evolution algebras with $\H=\T_p$ have nontrivial automorphism group.
\end{proposition}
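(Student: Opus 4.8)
The plan is to turn the discussion immediately preceding the statement into a proof, carrying the running hypothesis of that paragraph that $E$ is not associative (this is genuinely needed: for $E=A_1$ one has $*=1_{\T_p}$ and yet $\rho$ is faithful, so part (1) would fail without it). The two spots that deserve explicit care are why $\ker(\rho)\ne E$ in part (1), and why $*\ne 1_{\T_p}$ forces $\H\not\cong K$. First I would record the setup: write $\U_p=R/I$ with $R=K[x_1,\dots,x_n,x_1^*,\dots,x_n^*]$ for the natural basis $\{e_i\}$ used to build the representation, recall that $\T_p\subseteq\U_p$ is the image of the free non-unital associative commutative algebra on the $x_i,x_i^*$, and note that the canonical involution $*$ of $\T_p$ is the restriction of the $K$-algebra automorphism of $R$ that swaps $x_i\leftrightarrow x_i^*$. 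In particular, since $\T_p$ is commutative, $*$ is a genuine $K$-algebra automorphism of $\T_p$ with $*^2=\mathrm{id}$.

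Next I would split on whether $*$ is trivial. If $*=1_{\T_p}$, then $\rho(E)\subseteq\mathrm{Sym}(\U_p,*)$, so Proposition~\ref{ventilador} applied to $\rho\colon E\to\U_p$ (the kernel is unchanged when the target is enlarged from $\T_p$ to $\U_p$, as $\T_p\hookrightarrow\U_p$) shows that $\ker(\rho)$ is an ideal with $E/\ker(\rho)$ associative; since $E$ is not associative, $\rho$ cannot be faithful. For $\ker(\rho)\ne E$: if $\rho=0$ then all $\bar x_i=0$, and applying $*$ (which interchanges $\bar x_i$ and $\bar x_i^*$) also $\bar x_i^*=0$; as these generate $\T_p$, this would force $\T_p=0$, impossible because $\T_p\cong\H\ni 1\ne 0$. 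If instead $*\ne 1_{\T_p}$, then $*$ is a nontrivial element of $\aut_{K\text{-alg}}(\T_p)$, so transporting along $\T_p\cong\H$ gives $|\aut_{K\text{-alg}}(\H)|\geq 2$, whence $\H\not\cong K$ (since $\aut_{K\text{-alg}}(K)$ is trivial). Now I invoke the dichotomy established in the paragraph leading to Proposition~\ref{singleton}: $E$ is finite-dimensional and perfect, so $\aut(E)$ is finite and $\H$ is finite-dimensional (Remark~\ref{despensa}); if moreover $\aut(E)=1$, then $\H$ is local with nilpotent radical, $\car(K)=p$ is prime, and $\H\cong K[x_1,\dots,x_n]/(x_1^{p^{e_1}},\dots,x_n^{p^{e_n}})$ as in Proposition~\ref{singleton}. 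This is exactly alternatives (a)/(b).

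For the ``in particular'' assertions, let $E$ be simple and not associative (still finite-dimensional, with $\H=\T_p$). Being not associative forces $E^2\ne 0$ (a zero-product algebra is associative), so $E^2=E$ is a nonzero ideal of the simple algebra $E$, i.e. $E$ is perfect, and parts (1)--(2) apply. Part (1) cannot occur: there $\ker(\rho)$ is a proper ideal of the simple algebra $E$, hence $0$, making $\rho$ faithful and contradicting ``$\rho$ is not faithful''. So part (2) holds; and when $\car(K)=0$ alternative (b) is impossible, leaving $\aut(E)\ne 1$.

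The argument is mostly bookkeeping over Propositions~\ref{ventilador} and \ref{singleton}; the step I would watch most closely is the observation that $*$ is an honest $K$-algebra automorphism of $\T_p$ --- so that even a bare algebra isomorphism $\T_p\cong\H$ transmits a nontrivial automorphism to $\H$ --- together with its companion, that $\rho=0$ forces $\T_p=0$; both rest on the $*$-symmetry $x_i\leftrightarrow x_i^*$ of the defining ideal $I$.
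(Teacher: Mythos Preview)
Your proposal is correct and follows essentially the same route as the paper's own argument, which appears in the paragraph immediately preceding the proposition. You are more explicit than the paper on two useful points: the need for the standing hypothesis that $E$ is not associative (your $A_1$ counterexample is apt), and the derivation of the ``in particular'' clause from parts (1)--(2) via simplicity forcing $\ker(\rho)=0$.
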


Consequently, an investigation of simple nonassociative evolution algebras in prime characteristic, whose associated Hopf algebra is as in
Proposition~\ref{singleton}, could complete the result in Proposition~\ref{bodorrio}.

Finally, let's make an observation about (possibly infinite-dimensional) evolution algebras: If we consider an evolution algebra $A$ whose graph is of type $E$ in the figure below, it includes a copy of $A_{3,\a}$, which we have previously demonstrated lacks a faithful associative and commutative representation. Therefore, $A$ has no faithful associative and commutative representation either. 
The same argument can be applied to any evolution algebra whose graph is of type $F$. Concerning those with graph of type $G$, it will depend on the scalars $\a$ and $\b$ such that $e_1^2=e_1+\a e_2$, 
$e_2^2=\b e_1+e_2$. If $\a\ne \b$, $\a\b\ne 1$ and $\a,\b\ne 0$, this algebra contains a copy of $A_{5,\a,\b}$, which also lacks a faithful associative and commutative representation (as we proved before).

\begin{table}[H]
\scalebox{0.65}{
\begin{tabular}{ccc}
$\tiny E:\xymatrix{
   & {}    &     &  \\
   & {\bullet}  \ar[r]&  {\bullet}_{e_{1}} \ar@(lu,ru)  \ar[r]&   {\bullet}_{e_{2}} \ar@(lu,ru) & {\bullet}  \ar[l]  \ar@{.}@/^/[d] \\
     & {\bullet}  \ar[ur]  \ar@{.}@/^/[u]&       & & {\bullet}  \ar[ul]     \\
           }$
& $F:\tiny \xymatrix{
   & {}    &     &    \\
   & {\bullet}  \ar[r]&  {\bullet}_{e_{1}}   \ar@/^.4pc/[r]&   {\bullet}_{e_{2}} \ar@/^.4pc/[l]\ar@(lu,ru) & {\bullet}  \ar[l]  \ar@{.}@/^/[d] \\
     & {\bullet}  \ar[ur]  \ar@{.}@/^/[u]&       & & {\bullet}  \ar[ul]     \\
           }$ &
            $G:\tiny\xymatrix{
   & {}    &     &    \\
   & {\bullet}  \ar[r]&  {\bullet}_{e_{1}}  \ar@(lu,ru) \ar@/^.4pc/[r]&   {\bullet}_{e_{2}} \ar@/^.4pc/[l]\ar@(lu,ru) & {\bullet}  \ar[l]  \ar@{.}@/^/[d] \\
     & {\bullet}  \ar[ur]  \ar@{.}@/^/[u]&       & & {\bullet}  \ar[ul]     \\
           }$ 

\end{tabular}}

\end{table}

\section*{Acknowledgement}

Part of the research work that led us to this article was carried out during research stays in Brazil by the first, fourth, fifth and sixth authors. These authors would like to thank the Federal University of Santa Catarina for their affability and generosity.
The research leading to this article was conducted, in part, during research stays in Spain by the second and third authors. These authors express their gratitude to the Universidad de Málaga for its warm hospitality and generous support.

\section{Declarations}

\subsection*{Ethical Approval:}

This declaration is not applicable.

\subsection*{Conflicts of interests/Competing interests:} We have no conflicts of interests/competing interests to disclose.

\subsection*{Authors' contributions:}

All authors contributed equally to this work. 

\subsection*{Data Availability Statement:} The data that support the findings of this study are available within the article as well as in the M\'alaga University Server {\bf agt2.cie.uma.es} at \url{http://agt2.cie.uma.es/polynomials.pdf}.

\bibliographystyle{acm}
\bibliography{ref}

\end{document}